\documentclass{amsart}
\usepackage{amsfonts,amssymb,amsmath}
\usepackage{enumerate}
\usepackage{graphicx}
\usepackage{xypic}

%\usepackage[hyperindex]{hyperref} 

%\usepackage{lineno}
%\pagewiselinenumbers

%\usepackage[ps2pdf,                %%% hyper-references for ps2pdf
%bookmarks=true,%                   %%% generate bookmarks ...
%bookmarksnumbered=true,%           %%% ... with numbers
%hypertexnames=false,%              %%% needed for correct links to figures !!!
%breaklinks=true,%                  %%% breaks lines, but links are very small
%linkbordercolor={0 0 1},%          %%% blue frames around links
%pdfborder={0 0 1}]{hyperref}%  %%% border-width of frames]{hyperref}

\numberwithin{equation}{section}

%%%%%%%%%%%%%%%%%% Nie's Macros %%%%%%%%%%%%%%%%%%%%%%%%%%

\def\ca{{\mathcal A}}

\def\cc{{\mathcal C}}

\def\cg{{\mathcal G}}
\def\ch{{\mathcal H}}

\def\cf{{\mathcal F}}
\def\cl{{\mathcal L}}
\def\cm{{\mathcal M}}

\def\cs{{\mathcal S}}
\def\ct{{\mathcal T}}
\def\cu{{\mathcal U}}
\def\cz{{\mathcal Z}}
\def\cx{{\mathcal X}}
\def\cy{{\mathcal Y}}

\def\c{\gamma}
\def\b{\beta}

\def\ve{\varepsilon}

\def\kp{\kappa}

\def\l{\label}
\def\|{|-|}

\def\bz{\mathbb Z}

\def\beg{\begin{gather*}}
\def\eng{\end{gather*}}
\def\begn{\begin{gather}}
\def\eegn{\end{gather}}

\def\beal{\begin{align}}
\def\eeal{\end{align}}

\def\bean{\begin{eqnarray}}
\def\eean{\end{eqnarray}}
\def\bea{\begin{eqnarray*}}
\def\eea{\end{eqnarray*}}

\def\beq{\begin{equation}}
\def\eeq{\end{equation}}

\def\nm{\nonumber}

\def\eea{\end{eqnarray*}}

\def\os{\overset}
\def\er{\eqref}

\def\ot{\otimes}
\def\op{\oplus}

\def\otm{\otimes M}

\def\ofm{\otimes_{G{\mathcal F}} M}

\def\gf{G\cf}
\def\ogf{\otimes_{\gf}}
\def\hogf{\widetilde\otimes_{\gf}}

\def\H{{\rm Hom}}
\def\M{{\rm Map}}
\def\Hg{{\rm Map}_{G\cu}}

\def\mt{\mapsto}

\def\a{\alpha}

\def\2h1{(\alpha_2^H)^{-1}}

\def\cop{\coprod}

\def\chom{\ch {om}}

\def\t{\widetilde}
\def\x0{\{x_0\}}
\def\wh{\widetilde}

\def\on{\operatorname}
\def\dps{\displaystyle}

\def\wt{\widetilde}

\def\ol{\overline}

\def\blt{\bullet}

%%%%%%%%%%%%%%%%%%%%%% End Nie's Macros %%%%%%%%%%%%%%%%%%%%%%%%%%%%%%%%%%%%%%
\newcommand{\calf}{\mathcal{F}}
\newcommand{\calg}{\mathcal{G}}
\newcommand{\call}{\mathcal{L}}
\newcommand{\calr}{\mathcal{R}}

\newcommand{\coker}{\operatorname{coker}}
\newcommand{\hot}{\widetilde{\otimes}}
\newcommand{\lad}{\operatorname{lad}}

\newcommand{\WH}{W\!H}

\newcommand{\id}{\operatorname{id}}

%%%%%%%%%%%%%%%%%%%%%%%%%%%%%%%%%%%%%%%%%%%%%%%%%

\theoremstyle{plain}
\newtheorem{theorem}[equation]{Theorem}
\newtheorem{proposition}[equation]{Proposition}
\newtheorem{lemma}[equation]{Lemma}

\theoremstyle{definition}
\newtheorem{definition}[equation]{Definition}
\newtheorem{example}[equation]{Example}

\theoremstyle{remark}
\newtheorem{remark}[equation]{Remark}

%\numberwithin{equation}{section}

% \input{tcilatex}

\begin{document}

\title{Stable equivariant abelianization, its properties, and applications}

\author{Pedro F. dos Santos}

\address{Department of Mathematics, Instituto Superior T\'ecnico, Avenida Rovisco Pais, Lisboa 1049-001, Portugal}
\email{pedfs@math.ist.utl.pt}

\author{Zhaohu Nie}

\address{Department of Mathematics, Penn State Altoona, 3000 Ivyside Park, Altoona, PA 16601, U.S.A.}
\email{znie@psu.edu}

%\date{\today}

\begin{abstract}
Let $G$ be a finite group. For a based $G$-space $X$ and a Mackey functor $M$, a topological Mackey functor $X\wh\otimes M$ is constructed, which will be called the stable equivariant abelianization of $X$ with coefficients in $M$. When $X$ is a based $G$-CW complex, $X\wh\otimes M$ is shown to be an infinite loop space in the sense of $\cg$-spaces. This gives a version of the $RO(G)$-graded equivariant Dold-Thom theorem. Applying a variant of Elmendorf's construction, we get a model for the Eilenberg-Mac Lane spectrum $HM$. 
The proof uses a structure theorem for Mackey functors and our previous results. 
\end{abstract}

\maketitle

\tableofcontents

\section{Introduction}
In nonequivariant algebraic topology,  a fundamental tool to study singular homology is the abelianization 
functor
$
X\mapsto X\otimes_\calf\bz,
$
from topological spaces to topological abelian groups ($X\otimes_\calf\bz$ is our notation for the free abelian on $X$ appropriately topologized;
see Example~\ref{G-modules}). 
The classical Dold-Thom theorem \cite{dt} asserts 
%As it is well-known, 
a natural isomorphism 
$$
\pi_i(X\otimes_\calf\bz)\cong H_i(X;\bz),
$$ 
when $X$ is a CW complex. 
Applying (the reduced version of) this functor to spheres produces a geometric model for the Eilenberg-Mac~Lane 
spectrum $H\bz$. 

In this paper, we construct a functor that will play the role of the abelianization
functor in the context of stable equivariant algebraic topology over a finite group $G$
(which will be fixed throughout the paper). 

First we need to establish some notation. We denote by $\cf,\cu,\ct,$ and $\ca b$  the categories of finite sets, unbased 
(compactly generated) topological spaces, based (compactly generated) topological spaces, and abelian groups. 
The corresponding categories of $G$-objects (given by a monoid homomorphism from $G$ to the Hom monoid of the object)
and equivariant morphisms are denoted by $G\cf, G\cu,G\ct$ (where the base point is $G$-fixed), and $G\on{-Mod}$ ($G$-modules). 
%The category  of $G$-modules is denoted $G\operatorname{-Mod}$.  
We will denote a Hom space in a topological 
category $\cc$ (such as $\cu$,  $G\cu$, $\ct$, $G\ct$) by $\M_\cc$.

One important distinction between the equivariant and nonequivariant settings is that in the former, 
coefficients are more complicated objects, and differ for homology and cohomology. By definition, a \emph{covariant coefficient system} 
$k$ is a covariant functor  $k\colon G\cf\to \ca b$ that transforms disjoint unions into direct sums, and this is the coefficient system needed for equivariant homology. The corresponding \emph{contravariant coefficient systems} are needed for equivariant cohomology. 

Another related notion  is that of  \emph{based $\cg$-space}, i.e. a contravariant functor $\cx\colon G\cf^{op}\to \ct$ transforming disjoint unions into products. 
The functor category of based $\cg$-spaces is denoted by $\cg\ct$. Naturally,  there is also the notion 
of (unbased) \emph{$\cg$-space}
whose category we denote by $\cg\cu$. 

\begin{remark}
\label{GF&CG}
Let $\cg$ be the orbit category of $G$ - the objects are orbits $G/H$ and the morphisms are $G$-maps. There is a natural inclusion functor $\cg\to G\cf$, since $G$ is finite. Furthermore, each finite $G$-set $S$ can be uniquely written as a disjoint union of $G$-orbits (hence $G/H$ for some $H$ after we choose one point in the orbit). Therefore a $\cg$-space $\cx$ is completely determined by its restriction 
$$
\cx:\cg^{op}\to \ct,
$$ 
since $\cx$ is required to transform disjoint unions to products. We will interchangeably use the two equivalent definitions throughout the paper.
%in what follows. 
\end{remark}

The importance of the category  $\cg\ct$ stems
from the fact that it is related to $G\ct$ by the \emph{functor of fixed points}: 
\begin{equation}\l{definephi}
\Phi\colon G\ct\to \cg\ct;\ X\mapsto \bigl((S\in \gf)\mapsto \M_{G\cu}(S,X)\cong \M_{G\ct}(S_+,X)\bigr),
\end{equation}
where 
%$\M_{G\ct}$ denotes the Hom space in $G\ct$, and 
the equivariant mapping spaces  are based by the 
constant map to the base point of $X$. 
(When $S=G/H$, $\Phi X(G/H)=\M_{G\cu}(G/H,X)=X^H$ is the fixed point space.) 
If no risk of confusion arises, we will often abuse  notation and write $X$ for $\Phi X\in \cg\ct$. 
Similarly, there is an unbased version of the  functor of fixed points also denoted $\Phi\colon G\cu\to \cg\cu$. 
We will also use a variant of the coalescence functor $\Psi\colon\cg\cu\to G\cu$ introduced by Elmendorf in \cite{elmendorf}. 
Up to homotopy it is the right adjoint of $\Phi$, and allows us to work in the  category $\cg\cu$,
with its standard model structure \cite[Chapter VI]{may}, and then translate results back
to $G\cu$.

The natural replacement for singular homology in the equivariant setting is  \emph{Bredon homology} \cite{bredon}.
In  \cite{n}  the second 
author constructed   an equivariant abelianization functor (not stable yet). 
For each  covariant coefficient system $k$ and each $G$-space $X$,  he defined 
%a functor from $G$-spaces to 
a topological abelian group $X\otimes_{\gf} k$
(the original notation was $\cg X\ogf k$), by the following \emph{coend} construction
\begin{equation}\l{coend}
X\ogf k=\coprod_{S\in \gf} \M_{G\cu}(S,X)\times k(S)/\approx,
\end{equation}
where the equivalence relation is generated by 
\begin{equation}\l{equivrel}
\M_{G\cu}(S,X)\times k(S)\ni (\phi f^*,\kp)\approx (\phi, f_* \kp)\in \M_{G\cu}(T,X)\times k(T)
\end{equation}
for a map $f:S\to T$ in $G\cf$ with $\phi f^*=\phi\circ f$ and $f_*=k(f)$. 
In \cite[Theorem 1]{n}, he showed  that for a   $G$-CW complex $X$, 
there is a natural isomorphism
\begin{equation}\l{nieiso}
\pi_i(X\otimes_{\gf} k)\cong H_i^G(X;k),
\end{equation}
where the right hand side denotes the Bredon equivariant homology of $X$ with coefficients in $k$. 
%We will denote the equivalence class of an element by $[-]$.
%\cite[Theorem 1]{n} proved that when $X$ is a $G$-CW complex, one has a natural isomorphism

Thus, for Bredon homology with a general covariant coefficient system, the question of finding an
%replacement for the 
equivariant abelianization functor
is satisfactorily settled. However, for the important class of covariant coefficient systems which also have suitably coupled contravariant functoriality to form some objects called  
%consisting of 
%coming from 
%which can be enhanced to 
\emph{Mackey functors} (we will recall the definition in Section 2), Bredon homology has a lot more structure since:
\begin{enumerate}[(i)]
\item  it can be enhanced to a theory with values in the category of Mackey functors~\cite{dieck};
\item it can be enhanced to an $RO(G)$-graded theory~\cite{may}.
\end{enumerate}

From the viewpoint of stable equivariant homotopy theory, Mackey functors are the coefficient systems needed for an ordinary stable equivariant cohomology theory~\cite{may}. 
Our goal in this paper is precisely to construct a version of the abelianization functor adapted to 
%\emph{$RO(G)$-graded equivariant homology with coefficients in a Mackey functor}. 
the stable equivariant homotopy theory, which we call the stable equivariant abelianization functor. 
%(The relevant definitions concerning Mackey functors are recalled in Section~2.)

To this end, we start with  a Mackey functor $M$ and a $G$-space $X$ and enhance the topological abelian group  
\er{coend} to a \emph{topological Mackey functor}  $X\ot M$ (see Definition~\ref{definition}). We achieve this 
by following closely the procedure used in \cite{dieck} to endow  $H_*(X;M)$ with a canonical Mackey  functor structure.

We then proceed to verify expected properties, like the fact that for a point $x_0$ one gets
$x_0\otimes M=M$, and the existence of a reduced version of the construction  $X\hot M$ 
(Definition~\ref{hot}) satisfying
$$
X\ot M=M\op X\hot M,
$$
for $X$ based.
In Example~\ref{G-modules} we show that our construction generalizes that given in \cite{ds} where 
the special case of a Mackey functor associated to a  $G$-module was treated.

Being a topological Mackey functor, $X\wh\otimes M$ is in particular a based $\cg$-space (we consider a topological abelian 
group to be based at $0$). Thinking of $\cg$-spaces as generalizations of $G$-spaces,  
$X\widetilde\otimes M$ is a reasonable candidate for the  stable equivariant abelianization functor.  %adapted to $RO(G)$-graded homology. 
In Section~3 we  show that this is indeed the right candidate: for a based $G$-CW complex $X$, 
\begin{enumerate}[(i)]
\item  we introduce the notion of $\varOmega$-$\cg$-spectrum  (Definition \ref{cgstuff}) and prove in Theorem~\ref{loopspace} that the correspondence 
$V\mapsto (\varSigma^VX)\wt\otimes M$ defines an $\varOmega$-$\cg$-spectrum, denoted $(\varSigma^\infty X)\wt\otimes M$ (here $V$ ranges over 
%an indexing set of 
finite dimensional $G$-representations, $S^V:=V\cup \infty$ is the $V$-sphere, and $\varSigma^V X=S^V\wedge X$ is the corresponding suspension);
\item we establish the following $RO(G)$-graded version of the Dold-Thom theorem (Theorem~\ref{rogdt}) 
$$
\pi_V^\cg(X\wh\otimes M):=[\Phi S^V,X\hot M]_{\cg\ct}\cong \t H_V^G(X;M),
$$
where  the $[-,-]_{\cg\ct}$ denotes based homotopy classes of based $\cg$-maps
and the right hand side denotes the  $RO(G)$-graded equivariant homology of $X$. 
\end{enumerate}

Our strategy to  prove these results is to use a structure theorem for Mackey functors of 
Greenlees and May~\cite{gm} to reduce to the case of a Mackey functor obtained from a $G$-module 
(see Example~\ref{G-modules}), which was treated in~\cite{ds}.  

In Section~4 we show that applying a variant of Elmendorf's  coalescence functor $\Psi$ \cite{elmendorf} to the  
$\varOmega$-$\cg$-spectrum $(\varSigma^\infty X)\wt\otimes M$ gives an $\Omega$-$G$-spectrum. 
%, such that
%\[
%\pi_V \Psi((\varSigma^\infty X)\otimes M) \cong H_V(X;M).
%\]
In particular, taking $X=S^0$ we get a new model for the equivariant Eilenberg- Mac~Lane spectrum
$H\!M$. This model differs from the previously known models \cite{cw} in that it does not require any
stabilization with respect to the representation spheres.

\medskip
\noindent{\bf Acknowledgement.} We would like to thank Paulo Lima-Filho  and Gustavo Granja for many helpful discussions related to this work. 

\section{Construction and examples}

In this section, we associate a topological Mackey functor $X\otimes M$ to  each pair $(X,M),$ where $X$ is a $G$-space and $M$ is Mackey functor $M$. 
We also introduce a reduced version of this construction for a based $G$-space $X$, denoted $X\hot M$. We study the properties of the bifunctors
$(X,M)\mapsto X\otimes M$ and $(X,M)\mapsto X\hot M$, and we discuss some examples.

Let us  start by recalling the definition of a Mackey functor.
\begin{definition}\label{def-Mackey}
A \emph{Mackey functor} $M$ consists of a pair $(M^*,M_*)$ of functors $M^*: \gf^{op}\to \ca b$ and $M_*:\gf\to \ca b$ 
with the same values on objects, which we denote by $M$, such that 
\begin{enumerate}
\item $M$ transforms disjoint unions into direct sums;
\item For each pullback diagram 
$$\xymatrix{
A\ar[r]^f\ar[d]_g & B\ar[d]^h\\
C\ar[r]^k & D}$$
in $\gf$, there is a commutative diagram in $\ca b$
\begin{equation}\l{commm}
\xymatrix{
M(A)\ar[r]^{M_*(f)} & M(B)\\
M(C)\ar[r]^{M_*(k)}\ar[u]^{M^*(g)} & M(D)\ar[u]_{M^*(h)}.
}
\end{equation}
\end{enumerate}

We denote the category of Mackey functors by $\cm k$. This is an abelian category, with kernels and cokernels defined using the abelian structure of $\ca b$. 

A topological Mackey functor is defined similarly with $\ca b$ replaced by the category $\ct\ca b$ of topological abelian groups. We denote the category of topological Mackey functors by $\ct\cm k$. 
\end{definition}

Now we  introduce our candidate for the abelianization functor 
for unbased $G$-spaces.
\begin{definition}
\l{definition} 
For a $G$-space $X$ and a Mackey functor $M=(M_*,M^*)$, we define a \emph{topological Mackey functor} $X\otm$ as follows. 

On the object level, for a finite $G$-set $S\in G\cf$, we define 
%the topological abelian group. 
\begin{equation}\l{construct}
(X\otimes M)(S):=(X\times S)\ot_{G\cf} M_*=:(X\times S)\ot_{G\cf} M
\end{equation}
as in~\er{coend}.
%where we use the covariant functoriality of $M$. 

%\begin{remark}\l{simplify} Recall our definition~\cite{n} 
%$$X\ogk=\coprod_{G/H} k(G/H)\cdot X^H/\approx,$$
%where the coproduct runs over all orbits $G/H$. By obvious considerations, one only needs to consider $\{G/H\}$ for one $H$ in each conjugacy class, since the others are isomorphic to these; also, for one particular $G/H$, one can mod out the $WH$-action by the same reason. 
%\end{remark}

For a morphism $f:S\to T$ in $G\cf$, we write $f_*=(X\otm)_*(f)$ and $f^*=(X\otm)^*(f)$ for simplicity and define them as follows. 
$$f_*:(X\otimes M)(S)=(X\times S)\ot_{G\cf} M\os{(\id\times f)\ogf \id}\longrightarrow (X\times T)\ot_{G\cf} M=(X\otimes M)(T)$$
is defined by the covariant functoriality of the coend construction $-\ot_{G\cf} M$~\cite[(2.4)]{n}. 

We now define 
\bea
f^*: (X\otm)(T)=(X\times T)\ofm\to (X\times S)\ofm=(X\otm)(S).
\eea
By~\er{coend}, an element on the left is represented by $(\gamma, c)\in \M_{G\cu}(C,X\times T)\times M(C)$ for some $C\in \gf$. 
We form the following pullback diagram in the category $G\cu$
% of $G$-spaces
\begin{equation}\l{pullback}
\xymatrix{
B\ar[d]^F\ar[r]^{\beta} & X\times S\ar[d]^{\id\times f}\\
C\ar[r]^\c & X\times T.
}
\end{equation}
Note that $B\in G\cf$. 
Define 
\begin{equation}\l{f^*}
f^*:(X\otm)(T)\to (X\otm)(S);\ [(\c, c)]\mapsto [(\beta, M^*(F)(c))],
\end{equation}
where $(\beta,M^*(F)(c))\in \Hg(B,X\times S)\times M(B)$. (Here and after, we denote the equivalence class of an element by $[-]$.)

\end{definition}

In Lemma~\ref{check} below we show that $f^*$ is well defined, and that $X\otm$ is a topological Mackey functor. 

\begin{remark}\label{Pedro-general} In Definition~\ref{definition} it would perhaps be more precise to denote the resulting 
topological Mackey functor by $\Phi X\otimes M$,   because it is
the $\cg$-space  $\Phi X$ that is used to define the values of $X\otimes M$. Indeed,  we have 
$(X\otimes M)(S)=(\Phi X\times \Phi S)\otimes_{G\cf}M$. In a similar fashion we could  define 
$\cx\otimes M$, for any $\cg$-space $\cx$. However, in what follows, we will only be interested in applying
this construction to the case where $\cx=\Phi X$, for some $G$-space $X$, and therefore we chose to simplify the notation by 
dropping the $\Phi$.
\end{remark}

%\begin{remark} If we only have a covariant coefficient system $k$, then the same constructions on objects and the covariant functoriality as above will give us a topological covariant coefficient system $X\ot k$. 
%\end{remark}

%\begin{remark} If we only have a $\cg$-space $\cx$ (a contravariant functor $\gf^{op}\to \cu$ which transforms disjoint union to product), then the same constructions as above still give a topological Mackey functor $\cx\otm$. 
%\end{remark}

\begin{lemma}\l{check} With Definition~\ref{definition}, $X\ot M$ becomes a topological Mackey functor. 
\end{lemma}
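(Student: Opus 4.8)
The plan is to verify, in order, the three things hidden in the statement: that $f^*$ in \er{f^*} is independent of the choice of representative $(\c,c)$ and of the pullback \er{pullback}; that $M^*$ and $M_*$ so defined are actually functors on $\gf^{op}$ and $\gf$; and finally that the Mackey double-coset compatibility \er{commm} holds for $X\ot M$. Throughout I will use the explicit coend description \er{coend}--\er{equivrel} and the fact, noted in Remark~\ref{Pedro-general}, that $(X\ot M)(S)=(\Phi X\times\Phi S)\ogf M$, so that everything is really a statement about the $\cg$-space $\Phi X$; in particular all the pullbacks in $G\cu$ that occur have the form $B\to C\times_{X\times T}(X\times S)$ and, since $\id\times f$ is a map of $G$-sets fibered over $X$, the space $B$ is again a finite $G$-set lying over $X\times S$, so \er{f^*} does land in $\Hg(B,X\times S)\times M(B)$ as claimed.

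First I would check well-definedness of $f^*$. For two pullback squares over the same $(\c,c)$ the two candidate bridge objects $B,B'$ receive a canonical $G$-isomorphism from the universal property, compatible with the maps $\beta,\beta'$ and $F,F'$; applying $M^*$ to that isomorphism and using the relation \er{equivrel} shows $[(\beta,M^*(F)(c))]=[(\beta',M^*(F')(c))]$. For independence of the representative it suffices to handle a single generating relation \er{equivrel}: if $(\c,c)=(\c' g^*, c)$ and $(\c',g_*c)$ are the two representatives coming from $g\colon C\to C'$ in $\gf$, then pulling back $\id\times f$ along $\c$ versus along $\c'$ fits into a commuting prism, and the pullback over $C$ is obtained from the pullback over $C'$ by a further pullback along $g$; the pullback axiom \er{commm} for $M$ (applied to that inner square of $G$-sets) together with functoriality of $M^*$ then identifies the two outputs up to \er{equivrel}. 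Continuity of $f^*$ is routine since it is induced on the quotient \er{coend} by a map that is manifestly continuous on each $\Hg(C,X\times T)\times M(C)$ (the pullback $B$ and the maps $\beta,F$ depend continuously on $\c$).

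Next, functoriality: $f^*$ preserves identities because the pullback of an identity is an identity; and $(fh)^*=h^*f^*$ follows from the standard pasting lemma for pullbacks — the composite of the two pullback squares built for $h$ and for $f$ is a pullback square for $fh$ — combined with the functoriality $M^*(F_1F_2)=M^*(F_2)M^*(F_1)$. Functoriality of $f_*$ is immediate from the already-cited covariant functoriality of the coend $-\ogf M$ in \cite[(2.4)]{n}. The additivity axiom (1) of Definition~\ref{def-Mackey} holds because $\M_{G\cu}(S\amalg S',X)\cong\M_{G\cu}(S,X)\times\M_{G\cu}(S',X)$ and $M$ is additive, so the coend \er{coend} splits as a product over the orbit decomposition of $S$.

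The main work, and the step I expect to be the real obstacle, is the Mackey pullback compatibility \er{commm}. Given a pullback square of $G$-sets with maps $f,g,h,k$ as in Definition~\ref{def-Mackey}, I must show $M_*(f)\circ g^* = h^*\circ M_*(k)$ on $(X\ot M)(C)$. Take a class $[(\c,c)]$ with $\c\colon \Gamma\to X\times C$. Computing $g^*$ means forming the pullback $P=\Gamma\times_{X\times C}(X\times A)$ in $G\cu$ and getting $[(\beta_g,M^*(F_g)(c))]$; then applying $(\id\times f)$ pushes the space coordinate forward. On the other side, $M_*(k)$ pushes the space coordinate by $\id\times k$ first, then $h^*$ forms the pullback $Q=(X\times D\text{-pushout of }\Gamma)\times_{X\times D}(X\times B)$. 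The claim is that both routes produce the same class, and the key is that the relevant square of $G$-spaces

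$$\xymatrix{
P\ar[r]\ar[d] & X\times B\ar[d]\\
\Gamma\ar[r] & X\times D
}$$

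is again a pullback — this is exactly the ``base change'' statement obtained by applying $X\times(-)$ to the given pullback of $G$-sets and then pulling back along $\c$, using that pullbacks compose and that $X\times(-)$ preserves pullbacks. Hence $Q\cong P$ compatibly with all structure maps, and then the axiom \er{commm} for the abelian group Mackey functor $M$, applied to the inner $G$-set square $P\to X\times B$, $P\to X\times A$, etc., together with the already-established functoriality of $f_*$ and $f^*$, finishes the verification. The only subtlety to be careful about is bookkeeping: ensuring at each stage that the auxiliary spaces remain finite $G$-sets (so that $M$ is even defined on them) and that the coend relation \er{equivrel} is invoked with the correct $G$-maps; I would dispatch this by consistently pushing the ``$X$-coordinate'' along for the ride and only ever pulling back along $G$-maps of the finite-$G$-set coordinate.
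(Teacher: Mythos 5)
Your proposal is correct and, for the part the paper actually proves in detail --- the well-definedness of $f^*$ --- it follows essentially the same route: the prism of pullback squares over the generating relation \eqref{equivrel}, the induced map between the two pullbacks, the observation that the connecting face is again Cartesian, and then the Mackey axiom \eqref{commm} for $M$ applied to that square of finite $G$-sets. The remaining verifications (continuity, functoriality of $f^*$ via pasting of pullbacks, additivity, and the condition \eqref{commm} for $X\ot M$ itself) are left by the paper as ``it can be checked,'' and your sketch of them --- using that $X\times(-)$ preserves pullbacks and that pullbacks paste, so the two routes around a Mackey square land in canonically isomorphic pullbacks --- is the expected argument and a useful amplification. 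One small imprecision: in the last step you propose to apply \eqref{commm} for $M$ to a square involving $X\times A$ and $X\times B$, but these are not finite $G$-sets, so $M$ is not defined on them; in fact no such application is needed there, since the pasting identification $P\cong Q$ makes the two routes produce literally the same representative, with the canonical isomorphism of the two pullback finite $G$-sets handled either by the already-established well-definedness of $h^*$ or by \eqref{commm} applied to that isomorphism square. This slip does not affect the soundness of your overall strategy.
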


\begin{proof} We first prove that $f^*$ in~\er{f^*} is well defined, i.e., the definition is independent of the choice of the representative $(\c,c)\in \Hg(C,X\times T)\times M(C)$. Choose another representative $(\c',c')\in \Hg(C',X\times T)\times M(C')$. Without loss of generality, by~\er{equivrel}, we assume that there is a $G$-map $h:C\to C'$, such that the following diagram
\begin{equation}\l{base}
\xymatrix{
C\ar[rr]^\c \ar[rd]^h & & X\times T\\
 & C'\ar[ru]^{\c'} & 
}
\end{equation}
commutes, i.e., $\c=\c'\circ h=\c'h^*$, and 
\begin{equation}\l{a'}  
c'=M_*(h)(c).
\end{equation}

%Now for $f: S\to T$ a $G$-map, 
Consider the following diagram
$$\xymatrix{
B\ar[rr]^\beta\ar[dd]^F\ar[rd]^{\exists !\ H} & & X\times S\ar[dd]^{\id\times f}\\
 & B'\ar[ru]^{\beta'}\ar[dd]^(.3){F'} & \\
C\ar'[r]^{\c}[rr]\ar[rd]^h & & X\times T,\\
 & C'\ar[ru]^{\c'} & 
}$$
where the bottom is diagram~\er{base}, the back and the right front faces are pullback diagrams of the form~\er{pullback}. Then by the universality of $B'$, there exists a unique $H:B\to B'$ to make the left front face and the top commute. One can easily see that the left front face is also Cartesian since both the fibers of $F$ and $F'$ are the same as the fibers of $\id\times f$. 

Then by definition~\er{f^*}, we need to show 
$$(\beta, M^*(F)(c))\in \Hg(B,X\times S)\times M(B)$$
and 
$$(\beta', M^*(F')(c'))\in \Hg(B',X\times S)\times M(B')$$
are equivalent. Since the top is commutative, $\beta=\b'\circ H=\b' H^*$. In view of~\er{equivrel}, we only need to show that $M_*(H)(M^*(F)(c))$ and $M^*(F')(c')=M^*(F')(M_*(h)(c))$ by~\er{a'} are equal. 
This follows from the left front face being Cartesian and the property~\er{commm} of the Mackey functor $M$. 

It can be checked that $f^*$ is a continuous homomorphism, and that $X\otm$ satisfies the conditions for a Mackey functor in Definition~\ref{def-Mackey}.
%For continuity, $f^*$ is defined first on the disjoint union, and then descends down the equivalence relation. For homomorphism, we only need to study the behavior of disjoint union under pullback in term of the addition by juxtaposition.  %It can also be checked that $X\otm$ satisfies~\er{commm}. We leave the detail to the reader.
%%%%%%%% For (1) in \ref{definition}, we use the Cartesian square
%$$\xymatrix{
%S\ar[r]\ar[d] & S\ar[d]\\
%S\ar[r] & S\coprod T
%}$$
%to get a splitting.
\end{proof}

\begin{proposition}[Functoriality]\l{funct} The construction introduced in Definition~\ref{definition} gives a bifunctor
$$\ot:G\cu\times \cm k\to \ct\cm k;\ (X,M)\mapsto X\otimes M,$$
%from the categories of $G$-spaces and Mackey functors to the category of topological Mackey functors. 
such that $X\ot -$ is also exact: For an exact sequence of Mackey functors
\begin{equation}\l{mackey-exact}
0\to M\to N\to P\to 0,
\end{equation}
there is an exact sequence of topological Mackey functors
\begin{equation}\l{topexact}
0\to X\ot M\to X\ot N\to X\ot P\to 0.
\end{equation}
%Furthermore, $-\otimes M$ is continuous.

%The above is also a fibration in the sense of $\cg$-spaces. 
\end{proposition}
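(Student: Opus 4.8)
The plan is to treat the bifunctoriality and the exactness separately; the first is a routine bookkeeping exercise, the second reduces, level by level, to the behaviour of the coend~\eqref{coend} in its coefficient variable.

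\textbf{Bifunctoriality.} A $G$-map $g\colon X\to Y$ gives $g\otimes\id_M\colon X\otimes M\to Y\otimes M$ which at $S\in\gf$ is the map $(g\times\id_S)\otimes_{\gf}\id$ coming from the functoriality of the coend in the space slot, and a morphism $\varphi$ of Mackey functors (a natural family $\varphi_C\colon M(C)\to N(C)$ compatible with both the covariant and the contravariant structure maps) gives $\id_X\otimes\varphi\colon X\otimes M\to X\otimes N$ defined on representatives by $[(\phi,c)]\mapsto[(\phi,\varphi_C(c))]$, which is well defined because $\varphi$ commutes with the maps $M_*(h)$ occurring in~\eqref{equivrel}. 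That these are morphisms of topological Mackey functors amounts to checking compatibility with the structure maps $f_*$ and $f^*$ for every $f\colon S\to T$ in $\gf$. Compatibility with $f_*$ is automatic, since $f_*$ is itself induced by functoriality of the coend in the space slot; compatibility of $\id_X\otimes\varphi$ with $f^*$ holds since~\eqref{f^*} uses only $M^*$ and $\varphi$ is natural for $M^*$; and compatibility of $g\otimes\id_M$ with $f^*$ holds because the pullback square~\eqref{pullback} defining $f^*$ for $Y$ may be taken to have the same vertex $B$ and the same map $F\colon B\to C$ as the one for $X$, so that $M^*(F)$ is applied to the same element on both sides. Continuity, the identity and composition axioms, and the interchange law $(g\otimes\id_N)\circ(\id_X\otimes\varphi)=(\id_Y\otimes\varphi)\circ(g\otimes\id_M)$ are then immediate on representatives, so $\otimes\colon G\cu\times\cm k\to\ct\cm k$ is a bifunctor.

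\textbf{Exactness.} Fix $X$ and an exact sequence~\eqref{mackey-exact}. Exactness in $\cm k$ is tested levelwise in $\ca b$, and exactness in $\ct\cm k$ on the underlying Mackey functors; since $(X\otimes M)(S)=(X\times S)\otimes_{\gf}M_*$ on underlying abelian groups, it suffices to show that for every $G$-space $Y$ the functor $Y\otimes_{\gf}-$ is exact on covariant coefficient systems, and to apply this with $Y=X\times S$ to the exact sequence $0\to M_*\to N_*\to P_*\to 0$ of coefficient systems. The key observation is that, by~\eqref{coend}, the underlying abelian group of $Y\otimes_{\gf}k$ depends only on the underlying $G$-set of $Y$, since $\M_{G\cu}(C,Y)$ is the same set whether or not one remembers the topology of $Y$ (the source $C$ being finite and discrete). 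Writing that $G$-set as a coproduct of orbits $Y\cong\coprod_{[y]\in Y/G}G/G_y$, using that~\eqref{coend} carries disjoint unions in the space slot to direct sums, and using the density isomorphism $(G/H)\otimes_{\gf}k\cong k(G/H)$ (the case $Y=\{x_0\}$ of which is the identity $\{x_0\}\otimes M\cong M$), we obtain a natural isomorphism of abelian groups
\[
Y\otimes_{\gf}k\;\cong\;\bigoplus_{[y]\in Y/G}k(G/G_y).
\]
As a functor of $k$ this is a direct sum of evaluation functors, hence exact, which gives the claim; the topological conditions one may wish to add to the notion of exactness in $\ct\cm k$ cause no difficulty here, since on the generating spaces every map in question is built from an injection or a quotient in the coefficient variable.

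\textbf{Main obstacle.} The functoriality part is routine; the one point deserving care is the compatibility of $g\otimes\id_M$ with the contravariant maps $f^*$, which rests on the naturality in $Y$ of the pullback~\eqref{pullback}. The real content is the exactness statement, whose crux is the reduction of $Y\otimes_{\gf}k$ — on underlying abelian groups — to a direct sum of evaluations of $k$ indexed by the orbit set $Y/G$; once this is in place, exactness is purely formal.
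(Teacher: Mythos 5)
Your proposal is correct, but the exactness half runs along a genuinely different track from the paper's. The paper's proof replaces an arbitrary element of $(X\times S)\ogf M$ by a representative $(\beta,b)$ with $\beta$ injective and rests everything on the observation \er{observation} that, for such a $\beta$ with $B\neq\varnothing$, one has $[(\beta,b)]=0$ iff $b=0$; levelwise exactness of \er{mackey-exact} then gives \er{groupexact} directly. You instead forget the topology (legitimate, since the sources $C$ in \er{coend} are finite discrete), decompose the underlying $G$-set of $Y=X\times S$ into orbits, and use the co-Yoneda identification $(G/H)\ot_{\gf}k\cong k(G/H)$ to rewrite the underlying abelian group of $Y\ot_{\gf}k$ as $\bigoplus_{[y]\in Y/G}k(G/G_y)$, a direct sum of evaluation functors in $k$, after which exactness is formal. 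Your route buys an explicit algebraic description of the coend that also reproves Example~\ref{apoint} and yields the paper's observation \er{observation} as a corollary; the one step worth writing out is the passage to possibly infinite orbit decompositions (every class has a representative with finite image and the relation \er{equivrel} respects the orbit decomposition of the target, so the comparison map from the direct sum is bijective). The paper's route is more local, avoids infinite decompositions, and its observation is reused verbatim in the proof of Proposition~\ref{cofiber}. Like the paper, you address only algebraic (levelwise) exactness and leave the topology of the terms aside, so your closing remark is at the same level of rigor as the original; in the bifunctoriality half, which the paper declares easy, your point that the pullback \er{pullback} for $X$ and for $Y$ can be taken with the same vertex $B$ and the same $F$ is the right justification for compatibility of $g\otimes\id_M$ with $f^*$.
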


\begin{proof} The functoriality in both $G\cu$ and $\cm k$ is easy to see from the definition. 
%The coend construction is covariant \cite{n}: For a $G$-map $\phi:X\to Y$ there is 
%$$\phi_*=\phi\ot M: X\ot M\to Y\ot M,$$
%and it is clearly natural with respect to the covariant functoriality of the Mackey functors. From the definition of the contravariant functoriality in Definition \ref{definition}, the naturality of $\phi\ot M$ with respect to it can also be easily seen. 

%The functoriality in Mackey functors is also easy to see. 

To prove the exactness of \er{topexact}, we need to show that for any $S\in G\cf$, the following sequence of topological abelian groups 
\begin{equation}\l{groupexact}
0\to (X\times S)\ogf M\to (X\times S)\ogf N\to (X\times S)\ogf P\to 0
\end{equation}
is exact. 
%$X\ot -$ follows from the definition of exactness for Mackey functors, which is defined component-wise, 
%together with the following fact (cf.~\cite[Proposition 2]{n}). 
For an element in $(X\times S)\ogf M$, one can choose a representative $(\b,b)\in \Hg(B,X\times S)\times M(B)$ with $\beta$ injective. (Otherwise, one can replace $\beta$ with the inclusion $i:\on{Im}(\b)\hookrightarrow X\times S$ and apply the equivalence relation~\er{equivrel}.) For an injective $\beta$ with $B\neq \varnothing$, 
% and $(\b,b)\in \Hg(B,X\times S)\times M(B)$, 
\begin{equation}\l{observation}
[(\b,b)]=0\in (X\times S)\ot_{G\cf} M\Leftrightarrow b=0\in M(B).
\end{equation}
%The continuity of $-\otimes M$ is clear.
The exactness of \er{groupexact} then easily follows from this observation and \er{mackey-exact}. 
\end{proof}

%\begin{remark} The above construction works more generally for $\cg\cu$ instead of $G\cu$. 
%the category $\cg\cu$ of $\cg$-spaces. 
%a general $\cg$-space $\cx$, instead of an honest $G$-space $X$. 
%\end{remark}

%Here is a trivial example of the above construction. 

\begin{example}\l{apoint} Applying our construction to the space $x_0$ of one point, one canonically recovers $M$, i.e., 
$$x_0\otm= M.$$
%The based version is $S^0\hofm=M$. 
This fact is rather straightforward, and we leave the details to the reader. 
\end{example}

We will now give a reduced version of the functor $X\otimes M$, defined for a based space $(X,x_0)$.
Recall that the base point $x_0$ is $G$-fixed. Consider the natural maps 
$$
x_0\os i\to X\os p\to x_0.
$$
Example~\ref{apoint} shows that $x_0\ot M=M$. Since  $p\circ i=\id$, it follows 
by functoriality (Proposition~\ref{funct}) that one has natural maps 
$$
M=x_0\ot M\os {i_*}\to X\ot M\os {p_*}\to x_0\ot M=M,
$$
such that 
\begin{equation}\l{splitting}
p_*\circ i_*=\id. 
\end{equation}

\begin{definition}\l{hot} For a based $G$-space $X$ with base point $x_0$ and a Mackey functor $M$, define the reduced topological Mackey functor 
$$X\hot M=
%\on{ker}(p_*:X\ot M\to x_0\ot M)\cong 
\on{coker}(i_*:x_0\ot M\to X\ot M).$$
The functor 
$$\hot:G\ct\times \cm k\to \ct\cm k;\ (X,M)\mapsto X\hot M$$
%sending $X$ and $M$ to $X\hot M$ 
is our \emph{stable equivariant abelianization functor}.
\end{definition}

By the splitting \er{splitting}, one naturally has a direct sum decomposition of Mackey functors
\begin{equation}\l{directsum}
X\ot M=M\op X\hot M.
\end{equation}

%One has the following obvious

%The following result shows that the unreduced functor $X\otimes M$ and the reduced one $X\hot M$ are related in a similar fashion as homology and reduced homology.

\begin{proposition}\l{cofiber}
A cofibration sequence of $G$-spaces 
$$Y\os i \hookrightarrow X\os q \to X/Y$$
gives rises to a short exact sequence
of topological Mackey functors
\begin{equation}
\label{cofibration-seq}
0\to Y\otimes M \os{i_*}\to X\otimes M \os{q_*}\to (X/Y)\hot M\to 0.
\end{equation}

%The above is also a fibration sequence. 
\end{proposition}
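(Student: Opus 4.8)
The plan is to reduce the short exact sequence \eqref{cofibration-seq} to an exactness statement at each finite $G$-set $S$, and then to the corresponding statement for the coend functor $(-)\ogf M$. Fix $S\in G\cf$. Applying the value functor at $S$ and using \eqref{construct}, the claim becomes that
$$
0\to (Y\times S)\ogf M\os{(i\times\id)\ogf\id}\longrightarrow (X\times S)\ogf M\os{(q\times\id)\ogf\id}\longrightarrow (X/Y)\hot M(S)\to 0
$$
is exact in $\ct\ca b$. By Definition~\ref{hot} and the direct sum decomposition \eqref{directsum}, $(X/Y)\hot M(S)$ is the cokernel of the split injection $M=x_0\ot M(S)\to (X/Y)\ot M(S)$, so it suffices to identify the image of $(q\times\id)\ogf\id$ with that cokernel and its kernel with the image of $(i\times\id)\ogf\id$.

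First I would handle surjectivity onto $(X/Y)\hot M(S)$: an element of $(X/Y)\ot M(S)$ is represented by a pair $(\beta,b)\in\Hg(B,(X/Y)\times S)\times M(B)$, and as in the proof of Proposition~\ref{funct} one may assume $B\neq\varnothing$ and $\beta$ injective, hence that $\beta$ is a $G$-cofibration onto a sub-$G$-set of $(X/Y)\times S$ (here I use that $B\in G\cf$ is discrete, so $\beta$ factors through a single orbit-by-orbit inclusion). Modulo the image of $x_0\ot M(S)$, one may further assume that $\beta$ lands in $((X/Y)\setminus\{\ast\})\times S\cong (X\setminus Y)\times S$. Since $q$ restricts to a $G$-homeomorphism $X\setminus Y\to (X/Y)\setminus\{\ast\}$, we can lift $\beta$ uniquely to $\tilde\beta\colon B\to X\times S$ with $(q\times\id)\tilde\beta=\beta$, and then $(\tilde\beta,b)$ maps to $[(\beta,b)]$. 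This proves surjectivity. The composite $(q\times\id)\ogf\id\circ(i\times\id)\ogf\id$ factors through $Y\times S\to\{\ast\}\times S\to (X/Y)\times S$, which is exactly the generator $x_0\ot M(S)$ of the subgroup we quotient by, so the composite vanishes in $(X/Y)\hot M(S)$; hence $\im\bigl((i\times\id)\ogf\id\bigr)\subseteq\ker$.

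For the reverse inclusion — which I expect to be the main obstacle — take $[(\gamma,c)]\in(X\times S)\ogf M$, again with $\gamma$ injective and $C\neq\varnothing$, mapping to $0$ in $(X/Y)\hot M(S)$; by the observation \eqref{observation} applied in $(X/Y)\ot M(S)$ together with the splitting, this forces (after possibly absorbing a term from $x_0\ot M$) that the part of $c$ supported over the points of $C$ whose image under $\gamma$ lies in $(X\setminus Y)\times S$ is zero, i.e.\ that $\gamma$ may be taken to land in $Y\times S$ with $c$ unchanged; then $[(\gamma,c)]$ is visibly in the image of $(i\times\id)\ogf\id$. Making this last reduction precise requires the same "shrink the representative along an injective $\beta$" trick as in Proposition~\ref{funct}, combined with the fact that $Y\hookrightarrow X$ being a $G$-cofibration lets one split off the part of a discrete $B\to X\times S$ lying over $Y$ from the part lying over $X\setminus Y$. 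The key inputs are thus: the exactness/left-exactness behavior of $(-)\ogf M$ encoded in \eqref{observation}, the splitting \eqref{directsum} and \eqref{splitting}, and the elementary fact that maps out of a finite discrete $G$-set into a space decompose according to any $G$-invariant partition of that space. Finally, the maps in \eqref{cofibration-seq} are continuous homomorphisms of topological abelian groups (as in Lemma~\ref{check} and Proposition~\ref{funct}), and since everything is natural in $S$ and compatible with the Mackey structure maps $f_*,f^*$ of Definition~\ref{definition}, the sequence is one of topological Mackey functors, completing the proof.
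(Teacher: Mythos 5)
Your proposal is correct and takes essentially the same route as the paper, whose proof is a three-line appeal to the observation \eqref{observation} together with the facts that $i$ is a closed inclusion, $q$ is surjective, and $q^{-1}(\ast)=Y$; your write-up simply makes explicit the injective-representative and orbit-decomposition details (splitting a finite $G$-set mapping to $X\times S$ into the part over $Y$ and the part over $X\setminus Y$, and lifting along the bijection $X\setminus Y\to (X/Y)\setminus\{\ast\}$) that the paper leaves implicit. The only omission is the injectivity of $i_*$, which is part of the short exact sequence but follows at once from \eqref{observation} since $i\times\id$ is injective on representatives.
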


\begin{proof} In view of \er{observation}, we see that $i_*$ is injective since $i$ is a closed inclusion, $q_*$ is surjective since $q$ is, and the sequence is exact in the middle since the inverse image of the base point under $q:X\to X/Y$ is $Y$. 
\end{proof}

One can write out the above definition of $X\hot M$ in terms of its components as follows. 
For a (general) based space $X$, first define the reduced topological abelian group
%and defining
\begin{equation*}
X\hot_{G\calf}M:=\coker(i_*\colon x_0\ot_{G\calf}M\to X\ot_{G\calf}M).
\end{equation*}
One then has for a finite $G$-set $S$,
\begin{equation}\l{onsets}
(X\hot M) (S) \cong (X\wedge S_+)\hot_{G\calf}M.
\end{equation}

%\end{proposition}

%$$X\widetilde\otimes_{G\cf} M=X\ofm/(\{x_0\}\ofm),$$
%$$(X\hofm)(S)=(X\wedge S_+)\hofm=(X\times S)\ofm/(\{x_0\}\times S\ofm).$$
%Here the inclusions are induced by the covariant functoriality of the coend construction, and the quotients are in the category of topological abelian groups. 
We now give two important examples of $X\otm$ and $X\hot M$ in the case where $M$ belongs to 
two special classes of  Mackey functors. These examples will be used extensively in  Section~3.

%Now let's give two examples of $X\otm$ and $X\hot M$ for special Mackey functors. 

\begin{example}
\label{G-modules}
A $G$-module $A$ determines a Mackey functor $\calr A$ whose value on a 
finite $G$-set $S$ is  the abelian group
$
{\calr}A(S)=\H_G(S, A)
$,
where the abelian group structure comes from the target $A$.   
For a $G$-map $f\colon S\to T$, the restriction and transfer maps are given by
\begin{align*}
\calr {A}^*(f)&\colon{\calr} A(T)\to {\calr} A(S);\ \phi\mapsto \phi\circ f, \\
{\calr} A_*(f)&\colon{\calr} A(S)\to {\calr} A(T);\ \psi\mapsto \left(t\mapsto \sum_
{s\in f^{-1}(t)}\psi(s)\right).
\end{align*}

Given a $G$-space $X$, it is natural to consider the  $G$-module $\oplus_{x\in X}A$ generated by $X$ with coefficients in $A$. It has a natural structure of topological $G$-module, which can be seen most clearly from its description as the 
coend $X\otimes_\calf {A}$.  
Here $\calf$ denotes the category
of finite sets,  $X$ denotes the contravariant functor $\calf^{op}\to G\mathcal{U}\colon
S\mapsto \M_\cu(S,X)$,  and $A$ denotes the covariant functor 
$\calf\to G\mathcal{U}\colon S\mapsto \H(S,A)$ with  
$$A_*(f\colon S\to T)\colon \H(S,A)\to \H(T,A);\ \psi\mapsto \left(t\mapsto \sum_{s\in f^{-1}(t)}\psi(s)\right).$$
%obtained by restricting  $\calr{A}$ to $\calf\subset G\calf$
See~\cite{n} for more details. 

For a based $G$-space $(X,x_0)$,  there is a reduced version of this construction, given by 
%the reduced version is 
%There is a version of the functor $X\mapsto X\otimes_\calf A $
%for pointed $G$-spaces defined by  
$$
%(X,x_0)\mapsto 
X\widetilde{\otimes}_\calf A:=
\operatorname{coker}(A=x_0\otimes_\calf A \to X\otimes_\calf  A).
$$ 
%(with the natural topology). 
Then similar to \er{directsum}, one has the following direct sum decomposition 
\begin{equation}\l{directsum2}
X\ot_\cf A=A\op X\hot_\cf A.
\end{equation}

The functor $X\mapsto X\hot_\calf A$ has a natural  structure of \emph{functor with smash products} (FSP)
given by
\begin{equation}\l{defineL}
L_{Y,X}\colon Y\wedge (X\hot_\calf A)\to (Y\wedge X)\hot_{\calf} A;\ 
(y,c)\mapsto (f_y\hot_\calf \id_A)(c)
\end{equation}
where $f_y\colon X\to Y\wedge X$ denotes the function $x\mapsto y\wedge x$. 
This FSP is studied in~\cite{ds} (where it is denoted $X\mapsto A\hot X$). In particular, 
it is shown there that the corresponding $G$-prespectrum  $\{S^V\hot_\calf A\}_V$ is 
an Eilenberg-Mac Lane $\Omega$-spectrum for the Mackey functor $\calr{A}$.

Applying the functor $\calr$ to the topological $G$-module $X\otimes_\calf A$, we obtain
a topological Mackey functor $\calr(X\otimes_\calf A)$. 
(If we only consider the contravariant functoriality, then $\calr$ is the same as the $\Phi$ in~\er{definephi}.) 
$\calr(X\otimes_\calf A)$ is closely related to the topological Mackey functor 
$X\otimes\calr A$ defined in Definition~\ref{definition}. 
%Indeed, this is a special case of the next example, where we will show that the two topological Mackey functors are naturally isomorphic in general. 
Indeed, for a finite $G$-set $S$, the forgetful functor  $ G\calf\to \calf$  
induces a natural map
$$
(X\otimes \calr A)(S)=(X\times S)\otimes_{G\calf}\calr{A}\to (X\times S)\otimes_\calf A,
%=\cr(X\ot_\cf A)(S),
$$
and it is shown in~\cite[Proposition 2]{n} that this map is an isomorphism onto the fixed point set
\begin{equation}\l{oldone}
(X\times S)\ogf {\mathcal R} A\os\sim\to ((X\times S)\otimes_\calf A)^G.
\end{equation}

There is a natural $G$-homeomorphism (see~\cite{ds} for more details)
$$
\M_\cu(S,X\otimes_\calf A)\os\sim\to (X\times S)\otimes_\cf A;\  f\mapsto \sum_{s\in S}L_{S_+,X_+}(s,f(s)), 
$$
where the left is the mapping space with the $G$-action of conjugation whose fixed point set consists of equivariant maps, and the $L$ on the right is as in~\er{defineL}. 
%%%%%%%%%%%%% Write out the inverse %%%%%%%%%%%%%%%%%%%%%
%(The inverse 
%$$(X\times S)\otimes_\cf A\to \M_\cu(S,X\otimes_\calf A)$$
%of the above map is defined as follows: Pick an element in the LHS with representative $(\a,a)$, where $\a:U\to X\times S,\ a:U\to A$ and $U\in \cf$. For $s\in S$, let $U_s=\a^{-1}(X\times \{s\})$. Consider the following natural maps 
%$$\beta:U_s\to U\os\a\to X\times S\os{ pr_1}\to X,\ b:U_s\to U\os a\to A.$$
%Then the image of $[(\a,a)]$ is defined to be the map $s\mapsto [(\beta,b)]$.) 
%\begin{gather*}
%[(\a:U\to X\times S,a:U\to A)]\mapsto (s\mapsto [(\a^{-1}(X\times \{s\})\to U\os{pr_1\circ \a}\to X,\a^{-1}(X\times \{s\})\to U\os a\to A)]),
%\end{gather*}
%where $U\in \cf$.) 
%%%%%%%%%%%%%%%%%%%%%%%%%%%%%%%%%%%%%%%%%%%%%%%%%%%%%%%%
Therefore on the level of fixed point sets, we have a homeomorphism 
\begin{equation}\l{mapandtensor}
((X\times S)\otimes_\cf A)^G\simeq  \M_{G\cu}(S,X\otimes_\calf A)={\mathcal R}(X\otimes_\cf A)(S).
\end{equation}

It can be checked that the above defines 
%a natural transformation of Mackey functors. Hence we obtain 
a natural isomorphism of topological Mackey functors 
\begin{equation}
\label{dfn:rho}
 \varrho\colon X\otimes \calr{A}\to \calr(X\otimes_\calf A).
\end{equation}
\end{example}

%%%%%%%%%%%%%%% Check of naturality %%%%%%%%%%%%%%% 
%An element $S\to X\otimes_\cf A$ is represented by a collection of maps
%$$U_s\to X\times A,\ s\in S.$$
%The corresponding elements are then 
%$$\coprod_{s\in S}U_s\to X\times S.$$
%Interpreting the functoriality this way is very easy. 
%%%%%%%%%%%%%%% End of check %%%%%%%%%%%%%%%%%%%%

The class of Mackey functors considered in the previous example
is very restrictive but  it admits an  important generalization introduced in~\cite{gm}, which 
we discuss now.

\begin{example}
\l{realexample} 
Given  a subgroup $H<G$, let $\WH=NH/H$ be its Weyl group.  There is a functor
$$
{\mathcal R} \colon \WH\on{-Mod}\to \cm k,
%{\rm Mackey\ functors},
$$
such that,  for a $\WH$-module $A$ and a finite $G$-set $S$, one has
\begin{equation}\l{easier}
{\mathcal R} A(S)=\H_{\WH}(S^H,A).
\end{equation}
For a $G$-map $f:S\to T$, we consider the restriction $f^H: S^H\to T^H$ which is a $\WH$-map. Then 
\begin{align*}
%\l{rvcontrav}
{\mathcal R} A^*(f)&\colon {\mathcal R} A(T)\to {\mathcal R} A(S);\ \phi\mapsto \phi\circ f^H\\
%is obtained by composition with $f^H$, and 
%\bea
%\l{rvcov}
{\mathcal R} A_*(f)&\colon {\mathcal R} A(S)\to {\mathcal R} A(T);\ \psi\mapsto \left(t\mapsto \sum_
{s\in (f^H)^{-1}(t)}\psi(s)\right)
\end{align*}
%It is clear that the  map on the right-hand side is $\WH$-equivariant since $f^H$ and $\psi$ are.

Whenever it is necessary to make the pair $(G,H)$ explicit
we write $\calr_H^G$ instead of $\calr$.
In particular, if $H=\{1\}$ and  $A$ is a $G$-module then $\calr_H^GA$ coincides with   
the Mackey functor $\calr A$ defined in Example~\ref{G-modules}.

The importance of the images of the functors $\calr_H^G$ (for all $H<G$) as a  class of Mackey functors  comes from a result of~\cite{gm}, which states that this class generates all Mackey functors in a precise
sense that we recall in Section~3. 

%\begin{equation}\l{mayconfuse}
%{\mathcal R} (V)(S)=(\op V_\sigma)^{\WH},
%\end{equation}
%where the summation runs over $\WH$-maps $\sigma: \WH\to S^H$, and for $f\in \op V_\sigma$ and $g\in \WH$, one has
%$$(g f)_\s=g\cdot f_{\s\circ r(g^{-1})}.$$

%It is easy to see that one has a one-to-one correspondence
%$$\H_{\WH}(\WH, S^H)\cong S^H;\ \sigma\to \s(1).$$
%Therefore by definition, the direct sum in~\er{mayconfuse} is the same as maps $S^H\to V$:
%$$(\op V_\sigma)\to \Hg(S^H,V);\ \sum_i \sigma_i v_i\mapsto (\sigma_i\mapsto v_i, \sigma_j\mapsto 0).$$

%Now let's show that furthermore under the above correspondence, one has
%$$(\op V_\sigma)^{\WH}\to \H_{\WH}(S^H,V).$$
%If $\sigma(1)=x\in S^H$, then $(\s\circ r(g^{-1}))(1)=g^{-1}x$. For $f\in (\op V_\s)^{\WH}$, one has
%$$f(x)=f_\s=(g f)_\s=g f_{\s\circ r(g^{-1})}=g f(g^{-1}x).$$
%Therefore $f$ is equivariant. 

%Therefore 
%$${\mathcal R} V(G/H)=\H_{\WH}((G/H)^H,V)=\H_{\WH}(\WH,V)=V,$$
%and ${\mathcal R} V(S)=0$ unless $S^H$ is not empty, i.e., there is a $G$-equivariant map $G/H\to S$. 
%and ${\mathcal R} V(G/J)=0$ unless $(G/J)^H$ is not empty, i.e., $H$ is subconjugate to $J$. 

The functor ${\mathcal R} $ has a left adjoint~\cite{gm}
$$
\cl: \cm k\to \WH\on{-Mod};\ M\mapsto M(G/H),
%{\rm Mackey\ functors}\to \WH-{\rm modules};\ M\mapsto M(G/H),
$$ 
where one notes that $M(G/H)$ is a $\WH$-module since $\H_{\gf}(G/H,G/H)=\WH$ acts on it.

This adjunction has  an obvious topological analogue:
%\begin{center}
$$\xymatrix{
{\rm topological}\ \WH{\rm -modules} \ar@<.5ex>[r]^(.475){\mathcal R} & {\rm topological\ Mackey\ functors}\ar@<.5ex>[l]^(.525)\cl.
}$$
%\end{center}
%given by setting $\calr A(S)=\M_{\WH\cu}(S^H,A)$ and $\mathcal{L}M=M(G/H)$ (and similarly for the
%values of $\calr A$ and $\mathcal{L} A$ on morphisms).

Now, associated to a $G$-space $X$ and a $\WH$-module $A$, we have two topological 
Mackey functors: $X\otimes\calr{A}$ and $\mathcal{R}(X^H\otimes_\calf A)$, where   $X^H\ot_\calf A$
is a topological $\WH$-module defined in Example~\ref{G-modules} (with $G=\WH$).
%In Proposition~\ref{identify} below w
The next proposition shows that there is a natural isomorphism
between these two functors, generalizing the isomorphism $\varrho$~\er{dfn:rho} in  
Example~\ref{G-modules}.

\begin{proposition}\l{identify} For a $\WH$-module $A$ and a $G$-space $X$, one has a natural isomorphism of topological Mackey functors
\begin{equation*}
%\l{xr}
\varrho\colon X\ot {\mathcal R} A\xrightarrow{\cong} {\mathcal R} (X^H\ot_\calf A).
\end{equation*}

For a based $X$, taking off from both sides the trivial factor ${\mathcal R} A$ in view of~\er{directsum} and~\er{directsum2}, one also has an isomorphism for the reduced version:
$$\widetilde\varrho\colon X\widetilde\ot {\mathcal R} A\xrightarrow{\cong} {\mathcal R} (X^H\widetilde\ot_\calf A).$$
\end{proposition}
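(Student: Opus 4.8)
The plan is to reduce Proposition~\ref{identify} to the already-established isomorphism $\varrho$ of Example~\ref{G-modules} by exploiting the description~\er{easier} of $\calr A$ in terms of $H$-fixed points. First I would unwind both sides on an object $S\in\gf$. On the left, using Remark~\ref{Pedro-general} and~\er{construct}, we have $(X\ot\calr A)(S)=(\Phi X\times\Phi S)\ot_{G\calf}\calr A$, which by~\er{coend} is a quotient of $\coprod_{C\in\gf}\M_{G\cu}(C,X\times S)\times\calr A(C)=\coprod_{C\in\gf}\M_{G\cu}(C,X\times S)\times\M_{\WH}(C^H,A)$. On the right, $\calr(X^H\ot_\calf A)(S)=\M_{\WH}(S^H,X^H\ot_\calf A)$ by~\er{easier} (applied with the topological $\WH$-module $X^H\ot_\calf A$ in place of $A$). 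The key observation is that taking $H$-fixed points is a functor $G\cu\to\WH\cu$ that sends a finite $G$-set $C$ to the finite $\WH$-set $C^H$, and that $\M_{G\cu}(C,X\times S)\cong\M_{\WH}(C^H,X^H\times S^H)$ when $C$ is a finite $G$-set — this is the standard adjunction between induction and fixed points, valid because every orbit $G/K$ has $(G/K)^H$ controlled by the double cosets $H\backslash G/K$ with Weyl-group action.

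The next step is to assemble these identifications into a map. I would define $\varrho_S$ on representatives by $[(\gamma,\kappa)]\mapsto[(\gamma^H,\kappa)]$, where $\gamma\colon C\to X\times S$ becomes $\gamma^H\colon C^H\to X^H\times S^H$ and $\kappa\in\M_{\WH}(C^H,A)$ is reinterpreted as an element of $\calr_{\WH}^{\WH}A$-data over the $\WH$-set $C^H$; then one feeds this pair into the coend~\er{coend} computing $(X^H\times S^H)\ot_{\WH\calf}\calr_{\{1\}}^{\WH}A=(X^H\times S^H)\ot_{\calf}A$ (using $\calr_{\{1\}}^{\WH}A=\calr A$ for the group $\WH$ as noted after Example~\ref{realexample}), and finally identifies $(X^H\times S^H)\ot_\calf A\cong\M_{\WH}(S^H,X^H\ot_\calf A)$ via the natural $\WH$-homeomorphism of Example~\ref{G-modules}. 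One must check that this descends through the equivalence relation~\er{equivrel}: a $G$-map $f\colon C\to C'$ restricts to a $\WH$-map $f^H\colon C^H\to C'^H$, and the restriction/transfer formulas in~\er{easier} are literally the pullbacks along $f^H$ of the corresponding formulas for $\calr$ over $\WH$, so compatibility is automatic. Continuity and additivity (taking disjoint unions to products, hence the Mackey condition~\er{commm}) follow because all the identifications used are the natural homeomorphisms already recorded in Example~\ref{G-modules}, now applied fibrewise over the orbit decomposition.

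To see that $\varrho_S$ is a bijection, I would note that the functor $(-)^H\colon G\cf\to\WH\cf$ together with the bijection $\M_{G\cu}(C,X\times S)\cong\M_{\WH}(C^H,X^H\times S^H)$ induces an equivalence between the diagram computing the left-hand coend and the diagram computing $(X^H\times S^H)\ot_{\WH\cf}\calr A$; this is where one uses that every finite $G$-set is a disjoint union of orbits and that a coefficient-system–type coend only depends on the values on orbits, so the two coends agree termwise after reindexing $C\rightsquigarrow C^H$. Then $\varrho_S$ is the composite of this reindexing equivalence with the known isomorphism~\er{mapandtensor}–\er{dfn:rho} for the group $\WH$, hence an isomorphism. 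Naturality in $X$ is clear from the formula on representatives; compatibility with restriction and transfer maps reduces, via the same formulas, to naturality of $\varrho$ over $\WH$, which was established in Example~\ref{G-modules}. Finally the reduced statement $\widetilde\varrho$ follows by taking cokernels: $\varrho$ is compatible with the splittings induced by $x_0\hookrightarrow X$, since on $x_0$ it specializes to the identity $\calr A=\calr A$, so it restricts to an isomorphism on the complementary summands $X\widetilde\ot\calr A$ and $\calr(X^H\widetilde\ot_\calf A)$ in~\er{directsum} and~\er{directsum2}.

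The main obstacle I expect is verifying carefully that the identification $\M_{G\cu}(C,X\times S)\cong\M_{\WH}(C^H,X^H\times S^H)$ for $C\in G\cf$ is both natural in $C$ (so that it intertwines the two coend equivalence relations) and $\WH$-equivariant in the way needed to match the restriction/transfer structure of $\calr A$ — in other words, pinning down that the ``$(-)^H$'' operation on the indexing category $G\cf$ correctly transports the Mackey-functor data of $\calr_H^G A$ to the Mackey-functor data of $\calr_{\{1\}}^{\WH}A$ after the reindexing. Once that bookkeeping is in place, everything else is an application of Example~\ref{G-modules} with $G$ replaced by $\WH$.
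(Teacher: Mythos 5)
Your definition of $\varrho_S$ on representatives, $[(\gamma,\kappa)]\mapsto[(\gamma^H,\kappa)]$, is exactly the map the paper uses, and your treatment of well-definedness, naturality, and the reduced version via the splittings \er{directsum}, \er{directsum2} is fine. The gap is in the bijectivity argument. Your ``key observation'' that $\M_{G\cu}(C,X\times S)\cong\M_{\WH\cu}(C^H,X^H\times S^H)$ for a finite $G$-set $C$ is false: restriction to $H$-fixed points gives only a map of hom-sets, not a bijection. For example, with $C=G/K$ one has $\M_{G\cu}(G/K,Y)=Y^K$, whereas $\M_{\WH\cu}((G/K)^H,Y^H)$ is a product over the $\WH$-orbits of $(G/K)^H$; taking $K=\{e\}$ and $H\neq\{e\}$ the left side is all of $Y$ while $(G/e)^H=\varnothing$ makes the right side a point. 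The actual adjunction (Lemma~\ref{RLspace}) runs the other way: $\M_{G\cu}(G/H\times_{\WH}U,Y)\cong\M_{\WH\cu}(U,Y^H)$ for $\WH$-sets $U$, with left adjoint $L$ and right adjoint $R=(-)^H$, and $R$ is not an equivalence $G\cf\to\WH\cf$ (free $G$-orbits are killed, and $RLU$ can be strictly larger than $U$). Hence there is no ``reindexing equivalence'' of the two coend diagrams along $C\rightsquigarrow C^H$, and your claim that the coends ``agree termwise'' does not hold; the statement cannot be reduced to Example~\ref{G-modules} applied verbatim to $\WH$ by a change of index alone.

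What is missing is precisely the content of the paper's proof: one must construct an explicit inverse $\varsigma\colon\calr(X^H\ot_\calf A)\to X\ot\calr A$ by sending a representative $(\beta,b)$ over a $\WH$-set $U$ to $(\varepsilon\circ L\beta,\eta_*b)$ over the $G$-set $LU=G/H\times_{\WH}U$, where $\eta_*$ is the transfer (sum over fibers) along the unit $\eta\colon U\to RLU$, and then verify $\varsigma\circ\varrho=\id$ and $\varrho\circ\varsigma=\id$. These verifications are not formal: they use the coend relation~\er{equivrel} to move elements along $\varepsilon\colon LRT\to T$ and $\eta\colon U\to RLU$ together with the covariant (transfer) structure of $\calr A$, and the triangle identity $R\varepsilon\circ\eta R=\id$ to see that $\varepsilon_*\eta_*=\id$. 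Without this construction (or an equivalent cofinality/Kan-extension argument that encodes the same transfer data), the surjectivity and injectivity of your $\varrho_S$ are unsubstantiated, so the proposal as written does not prove the proposition.
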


%\begin{remark} When $H=e$ consists of just the unit, then $\WH=G$, and ${\mathcal R}A={\underline A}_{tr}$ as in~\cite[Proposition 5.2]{n}, which actually is the isomorphism~\er{xr} when applied to $pt$. 
%\end{remark}

For the proof, we need the following lemma. 

\begin{lemma}\l{RLspace} Let $G$ be a finite group, $H$ a subgroup of $G$, and $\WH$ the Weyl group of $H$ in $G$. Then one has the following two adjoint functors:
$$R: G\cu\to \WH\cu;\ X\mapsto X^H,$$
and 
$$L: \WH\cu\to G\cu;\ Y\mapsto G/H\times_{\WH} Y,$$
such that 
$$\M_{G\cu}(LY,X)=\M_{\WH\cu}(Y,RX).$$
\end{lemma}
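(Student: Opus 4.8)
The plan is to exhibit the adjunction directly by constructing the unit and counit and verifying the triangle identities, or equivalently by producing a natural bijection $\M_{G\cu}(LY,X)\cong \M_{\WH\cu}(Y,RX)$ and checking naturality in both variables. First I would recall that $G/H\times_{\WH}Y$ is formed from $G/H\times Y$ by the $\WH$-action $(gH,y)\cdot n=(gnH, n^{-1}y)$ (well-defined since $n\in NH$ normalizes $H$), with residual left $G$-action on the first factor; one checks this is a genuine $G$-space in $G\cu$. Then, given a $G$-map $\varphi\colon G/H\times_{\WH}Y\to X$, I would define its adjunct $\widehat\varphi\colon Y\to X^H$ by $\widehat\varphi(y)=\varphi([eH,y])$. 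The key point is that $\varphi([eH,y])$ is $H$-fixed: for $h\in H$ we have $h\cdot[eH,y]=[hH,y]=[eH,y]$ since $hH=eH$ in $G/H$, so $h\varphi([eH,y])=\varphi(h[eH,y])=\varphi([eH,y])$; hence $\widehat\varphi$ indeed lands in $X^H=RX$. That $\widehat\varphi$ is $\WH$-equivariant follows from $n\cdot[eH,y]=[nH,y]=[eH,ny]$ (using the relation $[gnH,n^{-1}y]=[gH,y]$ with $g=e$ rewritten), so $\widehat\varphi(ny)=\varphi([eH,ny])=\varphi(n[eH,y])=n\varphi([eH,y])=n\widehat\varphi(y)$.

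Conversely, given a $\WH$-map $\psi\colon Y\to X^H$, I would define $\check\psi\colon G/H\times_{\WH}Y\to X$ by $\check\psi([gH,y])=g\cdot\psi(y)$. Here the main verification is well-definedness: replacing $(gH,y)$ by $(gnH,n^{-1}y)$ for $n\in NH$ gives $gn\cdot\psi(n^{-1}y)=gn\cdot n^{-1}\psi(y)=g\cdot\psi(y)$ (using $\WH$-equivariance of $\psi$, noting $n$ acts on $\psi(y)\in X^H$ through its class in $\WH$, and the $H$-ambiguity in $gn$ is absorbed because $\psi(y)$ is $H$-fixed). One then checks $\check\psi$ is $G$-equivariant (immediate from the left multiplication on $G/H$) and continuous (it factors through the quotient map from $G\times Y$). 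The assignments $\varphi\mapsto\widehat\varphi$ and $\psi\mapsto\check\psi$ are mutually inverse: $\widehat{\check\psi}(y)=\check\psi([eH,y])=e\cdot\psi(y)=\psi(y)$, and $\check{\widehat\varphi}([gH,y])=g\cdot\widehat\varphi(y)=g\cdot\varphi([eH,y])=\varphi(g[eH,y])=\varphi([gH,y])$.

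Finally I would note naturality: for a $G$-map $X\to X'$ the square relating $\M_{G\cu}(LY,-)$ and $\M_{\WH\cu}(Y,R-)$ commutes because both adjuncts are defined by evaluation/post-composition, and similarly for a $\WH$-map $Y'\to Y$. I expect the only genuinely delicate point to be the well-definedness of $\check\psi$ on the balanced product — specifically keeping straight that $\WH=NH/H$ acts on the $H$-fixed-point set $X^H$ (the $H$-subgroup acts trivially there, which is exactly what makes the formula $g\cdot\psi(y)$ unambiguous modulo the $H$-coset) — together with the routine but necessary continuity check, which follows from the universal property of the quotient topology on $G/H\times_{\WH}Y$. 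Everything else is formal.
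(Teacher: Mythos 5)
Your proposal is correct and matches the paper's argument in substance: your explicit bijection $\varphi\mapsto\widehat\varphi$, $\psi\mapsto\check\psi$ is exactly composition with the paper's unit $\eta\colon y\mapsto (eH,y)$ and counit $\varepsilon\colon (gH,x)\mapsto gx$, and your mutual-inverse check is the same computation as the paper's triangle identities. The extra verifications you supply (well-definedness on the balanced product, $\WH$-equivariance, continuity via the quotient topology, naturality) are the routine details the paper leaves implicit, so nothing essential differs.
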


\begin{proof} The counit is
\begin{equation}\l{unit}
\varepsilon: LRX=G/H\times_{\WH} X^H\to X;\ (gH,x)\mapsto gx,
\end{equation}
and the unit is 
\begin{equation}\l{counit}
\eta: Y\to RLY=(G/H\times_{\WH} Y)^H;\ y\mapsto (eH,y).
\end{equation}

One can easily see that the following composition 
\begin{gather}
RX\os{\eta R}\to RLRX\os{R \varepsilon}\to RX:\l{identity}\\
X^H\to (G/H\times_{\WH} X^H)^H\to X^H;\nm\\
x\mapsto (eH,x)\mapsto x\nm
\end{gather}
is the identity. Similar composition for $LY$ is also the identity. 
%\begin{gather*}
%LY\os{L\eta}\to LRLY\os{\varepsilon L}\to  LY:\\
%G/H\times_{\WH} Y\to G/H\times_{\WH} (\ghw Y)^H\to \ghw Y;\\
%(gH,y)\mapsto (gH,(eH,y))\mapsto (gH,y)
%\end{gather*}
%is also the identity. 
\end{proof}

%In order to be able to use the results of~\cite{gm}, we would like to compute $X\ot RV$, where $RV$ is the associated Mackey functor to a $\WH$-module $V$. I suspect that this is the same as the topological Mackey functor associated to $X^H\ot V$ as a $\WH$-module. 

\begin{proof}[Proof of Proposition~\ref{identify}]
We only prove the unreduced version, and the reduced version clearly follows by~\er{directsum} and~\er{directsum2}. 
%Then applied to $x_0$ and take the quotient, we get the result for the based version. 

%We first give a reformulation of ${\mathcal R} (X^H\ot_\cf A)$, which will bear more resemblance to the left of~\er{xr}. Given a finite $G$-set $S$, one can easily check
%\begin{gather}
%{\mathcal R} (X^H\ot_\cf A)(S)\os {\er{easier}} =\M_{\WH}(S^H,X^H\ot_\cf A)=((X^H\times S^H)\ot_\cf A)^{\WH};\l{same1}\\
%(s^j\mt \sum_i a_i^j x_i^j)\mt \sum_{i,j} a_i^j(x_i^j,s^j).\nm
%\end{gather}

First note that, for a finite $G$-set $S$, we have 
\begin{align} 
&{\mathcal R}(X^H\otimes_\cf A)(S)\os {\er{easier}} =\M_{\WH\cu}(S^H,X^H\ot_\cf A)\l{same3}\\
\os{\er{mapandtensor}}=&((X^H\times S^H)\ot_\cf A)^{\WH} \os{\er{oldone}}=(X^H\times S^H)\ot_{\WH\cf} {\mathcal R}^{\WH}_{\{1\}}A\nm\\
\os{\er{coend}}=&\coprod_{U\in \WH\cf} \M_{\WH\cu}(U,X^H\times S^H)\times \H_{\WH}(U,A)/\approx.\nm
\end{align}

%The pointed version is 
%$${\mathcal R} (X^H\widetilde\ot_\cf A)(S)=((X^H\times S^H)\ot_\cf A)^{WH}/((\{x_0\}\times S^H)\otimes A)^{WH}.$$

%For a $S$, the left hand side is 
%$$(X\ot {\mathcal R} A)(S)=(X\times S)\ogf {\mathcal R} A.$$
%The pointed version is 
%$$(X\widetilde\ot {\mathcal R} A)(S)=(X\times S)\ogf {\mathcal R} A/((\{x_0\}\times S)\ogf {\mathcal R}A).$$

%The basic reason seems to be that for an index set $S$, $X\times S=\H(S,X)$ by definition. 
%Let $\cf$ be the category of finite sets and all maps. 
%{\bf If one considers $-\ot A=\H(-,A)$, then this identification is just a natural adjunction.} 

%In~\cite[Proposition 5.2]{n}, the second author showed that 
%\begin{align}
%&((X^H\times S^H)\ot_\cf A)^{WH}=(X^H\times S^H)\ot_{WH\cf} {A}\nm\\
%=&\coprod_{U\in WH\cf} \M_{WH}(U,X^H\times S^H)\times \H_{WH}(U,A)/\approx.\l{same2}
%\end{align}

%We now construct the following two morphisms:
% inverse of each other:
%$$f: X\ot {\mathcal R} A\to {\mathcal R} (X^H\ot A);\ h:{\mathcal R} (X^H\ot_\cf A)\to X\ot {\mathcal R} A.$$
For a finite $G$-set $S$, we define a morphism $\varrho\colon X\ot {\mathcal R} A\to \calr (X^H\ot_\calf A)$
as the following composition:
\begin{equation}\l{deff}
%\displaystyle{
\begin{array}{lcl}
%\begin{align}
 & &(X\ot {\mathcal R} A)(S)=(X\times S)\ogf {\mathcal R} A\\
&=&{\displaystyle \coprod_{T\in G\cf}} \M_{G\cu}(T,X\times S)\times \H_{\WH}(T^H,A)/\approx\\
&\os{R\times \id}\to&{\dps \cop_{RT\in \WH\cf}} \M_{\WH\cu}(RT,RX\times RS)\times \H_{\WH}(T^H,A)/\approx\\
&\os{\er{same3}}\to& (X^H\times S^H)\ot_{\WH\cf} {\mathcal R}^{\WH}_{\{1\}}A=\calr(X^H\ot_\calf A)(S).
%&\to &(X^H\ot\calr_{\{1\}}^{\WH}A)(S^H)\\
%&\os{\varrho_{\WH}}\to& \calr_{\{1\}}^{\WH}(X^H\ot_\cf A)(S^H)\\
%&=& \calr(X^H\ot_\calf A)(S).
%\end{align}
\end{array}
%}
\end{equation}
%where $\varrho_{\WH}$ denotes the isomomorphism \eqref{dfn:rho} applied to the case where $G=\WH$.
%Now consider the 

%One can check that $\varrho$ is well-defined and is a natural transformation of Mackey functors: For the contravariancy, we need to use the simple fact that $R$ preserves Cartesian diagram. 

Also define a morphism $\varsigma\colon\calr(X^H\ot_\calf A)\to X\ot\calr A$ by
\begin{equation*}
\begin{array}{lcl}
 & &\calr(X^H\ot_\cf A)(S)\os{\er{same3}}=(X^H\times S^H)\ot_{\WH\cf} {\mathcal R}^{\WH}_{\{1\}}{A}\\
%\calr_{\{1\}}^{\WH}(X^H\ot_\calf A)(S^H)\\
&
%\os{\varrho_{\WH}^{-1}}\to
=&{\dps \cop_{U\in \WH\cf}} \M_{\WH\cu}(U,R(X\times S))\times \H_{\WH}(U,A)/\approx\\
&\os {L\times \eta_*}\to & {\dps \cop_{LU\in G\cf}} \M_{G\cu}(LU,LR(X\times S))\times \H_{\WH}(RLU,A)/\approx\\
&\os {(\ve\circ)\times \id}  \to & {\dps \cop_{LU\in G\cf}} \M_{G\cu}(LU,X\times S)\times \H_{\WH}(RLU,A)/\approx\\
&\to & (X\times S)\ogf {\mathcal R} A=(X\ot {\mathcal R} A)(S).
\end{array}
\end{equation*}
%where, as before,  $\varrho_{\WH}$  denotes the isomomorphism \eqref{dfn:rho} applied to the case where $G=\WH$, 
Here the maps $R$, $L$, $\eta$ and $\ve$ are as in Lemma~\ref{RLspace}, and the map 
\begin{gather*}
\eta_*:\H_{\WH}(U,A)\to \H_{\WH}(RLU,A);\\
a\mapsto \left((u'\in RLU)\mt \sum_{u\in \eta^{-1}(u')\subset U} a(u)\right)
\end{gather*}
is the ``transfer" map for $\eta:U\to RLU$~\er{counit}. 

%First of all, it uses that $R$ preserves Cartesian diagram.

%Therefore we have proved the following lemma.
\begin{lemma} Both $\varrho$ and $\varsigma$ are well-defined, and are natural transformations of topological Mackey functors. 
\end{lemma}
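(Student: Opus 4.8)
The plan is to verify the two required properties for each of $\varrho$ and $\varsigma$ separately: (1) well-definedness, i.e. compatibility with the equivalence relation $\approx$ of~\er{equivrel} and with the disjoint-union decomposition, and (2) naturality, i.e. commutativity with the restriction maps $f^*$ and transfer maps $f_*$ of the two topological Mackey functors, for every $G$-map $f\colon S\to T$. Since both maps are built by post-composing/pre-composing with the functors $R=(-)^H$ and $L=G/H\times_{\WH}(-)$ together with the (co)unit of the adjunction in Lemma~\ref{RLspace}, and with the evident ``transfer'' map $\eta_*$, the whole argument is a sequence of diagram chases in the coend description~\er{coend}.

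First I would check well-definedness of $\varrho$. On a representative $(\gamma,c)\in\M_{G\cu}(T,X\times S)\times\H_{\WH}(T^H,A)$, applying $R$ to $\gamma$ gives $\gamma^H\colon T^H\to X^H\times S^H$ since $R$ is a functor and $(X\times S)^H=X^H\times S^H$; if two representatives are related by a $G$-map $h\colon T\to T'$ with $\gamma=\gamma'\circ h$ and the coefficients matched by the transfer $({\mathcal R}A)_*(h)$, then $h^H\colon T^H\to T'^H$ is a $\WH$-map with $\gamma^H=\gamma'^H\circ h^H$, and because $(h^H)^{-1}(t')=(h^{-1}(t'))^H$ — this is the key point: taking $H$-fixed points commutes with taking fibers of a $G$-map over an $H$-fixed point — the transfer formula in $\WH\cf$ matches $({\mathcal R}A)_*(h)$ by its very definition in~\er{easier}. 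Compatibility with disjoint unions is immediate since $R$ preserves them. For $\varsigma$ the analogous check uses that $L$ is a functor preserving disjoint unions, that $\eta_*$ is functorial for composites (a short computation with fibers of $\eta$), and that post-composing with the counit $\ve$ respects the equivalence relation; the relation on the $\WH$-side is pushed forward along $L$ to a relation on the $G$-side.

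Next, naturality. For the restriction $f^*$ one traces an element $[(\gamma,c)]$ on the $T$-level of $X\ot{\mathcal R}A$ through the pullback~\er{pullback} defining $f^*$, then applies $\varrho$; on the other side one applies $\varrho$ first and then the restriction map of ${\mathcal R}(X^H\ot_\calf A)$, which by~\er{easier} is pre-composition with $f^H$ followed by the pullback defining restriction in the $G=\WH$ case of Example~\ref{G-modules}. The two agree because $R$ preserves pullbacks over $H$-fixed objects — again the statement that $(-)^H$ is exact on the relevant pullback squares in $G\cu$, with $(\text{pullback})^H$ the pullback of the $H$-fixed diagrams — and because $M^*(F)$ for $M={\mathcal R}A$ is literally pre-composition with $F^H$. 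For the transfer $f_*$ the check is more direct: $f_*$ is induced by $(\id\times f)$, so one needs $R(\id\times f)=\id\times f^H$, which is clear, together with the matching of the coefficient transfer, which is again~\er{easier}. The naturality of $\varsigma$ is handled symmetrically, using the unit $\eta$ and the fact that $L$ sends the pullbacks and the $(\id\times f)$-maps in $\WH\cu$ to the corresponding ones in $G\cu$ up to the canonical identifications $LR(X\times S)\to X\times S$ given by $\ve$; one must check the triangle identities~\er{identity} to see that the $\ve$'s and $\eta$'s cancel correctly.

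I expect the main obstacle to be the naturality with respect to the restriction maps $f^*$, specifically verifying that $R=(-)^H$ takes the pullback square~\er{pullback} (and the one used to define restriction for $X^H\ot_\calf A$) to a pullback square and that the resulting identifications are the canonical ones — this is where the geometry of fixed points interacts with the coend's equivalence relation, and keeping track of basepoints/empty-set cases and of which representative is chosen requires care. The continuity of $\varrho$ and $\varsigma$ is routine (each is assembled from continuous maps $R$, $L$, $\eta_*$, $\ve\circ(-)$ on the disjoint pieces and descends to the quotient), and the fact that they are the asserted inverse pair will be deferred to the subsequent argument; here I would only remark that the triangle identities of Lemma~\ref{RLspace} make $\varsigma\circ\varrho$ and $\varrho\circ\varsigma$ visibly identities on representatives, which also doubles as an a posteriori confirmation of well-definedness.
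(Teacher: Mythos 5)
Your proposal is correct and takes essentially the same route as the paper's (very terse) proof, which simply remarks that well-definedness and covariant naturality are routine, that the contravariant naturality is the only genuinely involved step and hinges on $R=(-)^H$ preserving Cartesian diagrams, and then leaves the details to the reader. You correctly single out the same crux point — that restriction maps are defined via pullbacks~\er{pullback} and that $(-)^H$ takes the relevant pullback square in $G\cu$ to a pullback square in $\WH\cu$. The only cosmetic difference is that the paper deduces this from $R$ being a right adjoint (Lemma~\ref{RLspace}), whereas you phrase it as a direct ``fixed points commute with fibers/pullbacks'' computation; both justifications are valid and the one in the paper is arguably cleaner since it requires no separate checking. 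One small caveat on your closing remark: the observation that the triangle identities make $\varsigma\circ\varrho$ and $\varrho\circ\varsigma$ identities ``on representatives'' cannot by itself serve as an \emph{a posteriori} confirmation of well-definedness — each of $\varrho$ and $\varsigma$ must first be shown to descend to the quotient independently (which you do earlier), and only then does the representative-level cancellation imply they are inverse isomorphisms.
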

\begin{proof}[Proof of the lemma] After unraveling all the definitions, the well-definedness and the naturality with respect to covariancy of the Mackey functors are easy to check. The naturality with respect to the contravariancy of Mackey functors (see Definition~\ref{definition}) is more involved, and uses the fact that the $R$ in Lemma \ref{RLspace}, being a right adjoint, preserves Cartesian diagrams. We leave the 
%beautiful 
details to the interested reader. 
\end{proof}

%for $\varsigma$ (it is simpler for $\varrho$). 

%%%%%%%%%%%%%%%%%% Check of contravariant naturality %%%%%%%%%%%%%%%%%%%%%

%It actually uses some beautiful diagrams. In more details, let 
%$$\xymatrix{
%W\ar[r]\ar[d] & T^H\ar[d] \\
%U\ar[r] & S^H,
%}
%\xymatrix{
%Z\ar[r]\ar[d] & T\ar[d]\\
%LU\ar[r] & S}
%$$
%be Cartesian diagrams. We have 
%$$\xymatrix{
%W\ar[rr]\ar[dd]\ar[rd] & & T^H\ar[dd]\\
% & RZ\ar[ru]\ar[dd] & \\
%U\ar'[r][rr]\ar[rd] & & S^H,\\
% & RLU\ar[ru] & 
%}$$
%Now by the right adjointness of $R$, the right front face is also Cartesian. Therefore the left front face is Cartesian. Then the property~\er{commm} gives the desired result.

%%%%%%%%%%%%%%%%%%%%%%%%%%%%%%%%%%%%%%%%%%%%%%%%%%%%%%%%%%%%%%%%%%%%%%%%%%

%the proof of the naturality depends on that $R$, being a right adjoint, preserves Cartesian squares. We leave the details to the interested reader. 

Let us now  show that both the compositions $\varsigma\circ\varrho$ and $\varrho\circ\varsigma$ 
are the identity. Fix a finite $G$-set $S$. 

First we check the equality $\varsigma\circ\varrho=\id$. 
%First $\varsigma\circ\varrho=\id$. 
Pick 
$$
(\a,a)\in \M_{G\cu}(T,X\times S)\times \H_{\WH}(T^H,A).
$$ 
Then 
$$
\varrho(\a,a)=(R\a,a)\in \M_{\WH\cu}(RT,R(X\times S))\times \H_{\WH}(T^H,A),
$$ 
and 
$$(\a',a')=(\varsigma\circ\varrho)(\a,a)=(\ve\circ LR\a,\eta_*a)\in \M_{G\cu}(LRT,X\times S)\times \H_{\WH}(RLRT,A).
$$ 
One has the following commutative diagram 
$$\xymatrix{
LRT\ar[r]^(.4){LR\a}\ar[d]^\ve & LR(X\times S)\ar[d]^\ve\\
T\ar[r]^(.4)\a & X\times S
}$$
by the naturality of $\ve$~\er{unit}. Therefore $\a'=\ve\circ LR\a=\a\circ\ve=\a\ve^*$. By~\er{equivrel}, one has
$$(\a',a')=(\a\ve^*,a')\approx (\a,\ve_*a').$$
To illustrate the situation, we draw the following diagram (which we don't claim to be commutative): 
$$\xymatrix{
RT\ar[rrd]^a\ar[d]^{\eta R} & & \\
RLRT\ar[rr]^{a'=\eta_*a}\ar[d]^{R\ve} & & A.\\
RT\ar[rru]_{\ve_*a'} & &
}$$
Then it is easy to see that 
$$\ve_*a'=\ve_*\eta_*a=a,$$
by~\er{identity}. 

Finally we check the equality $\varrho\circ\varsigma=\id$. Pick 
$$
(\b,b)\in \M_{\WH\cu}(U,RX\times RS)\times \H_{\WH}(U,A).
$$ 
Then 
$$
\varsigma(\b,b)=(\ve\circ L\b,\eta_*b)\in \M_{G}(LU,X\times S)\times \H_{\WH}(RLU,A),
$$ 
and 
\begin{align*}
&(\b',b')=(\varrho\circ\varsigma)(\b,b)=(R(\ve\circ L\b),\eta_*b)\\
\in & \M_{\WH\cu}(RLU,RX\times RS)\times \H_{\WH}(RLU,A).
\end{align*} 
One has the following commutative diagram 
$$\xymatrix{
%RLU\ar[rr]^{R\ve\circ LR\a} & & RX\times RS\\
% &U\ar[ru]^\b\ar[lu]^\eta &
%}$$
U\ar[r]^(.4)\b\ar[d]^\eta & RX\times RS\ar[d]^{\eta R}\\
RLU\ar[r]^(.4){RL\b} & RL(RX\times RS).
}$$
by the naturality of $\eta$~\er{counit}. Therefore $\eta R\circ \b=RL\b\circ \eta$. Composing both sides with $R\ve$ from the left, one sees in view of~\er{identity} that 
$$\b=R\ve\circ RL\b\circ \eta=R(\ve\circ L\b)\circ \eta=\b'\circ \eta=\b'\eta^*.$$
Therefore by~\er{equivrel}, one has
$$(\b,b)=(\b'\eta^*,b)\approx (\b',\eta_*b)=(\b',b').$$ 
\end{proof}
\end{example}

%A typical use of Example~\ref{realexample} is through the following adjunction. 
%What Greenlees and May~\cite{gm} did can be generalized to the topological case. 
%We denote by $\cg\cu$ the
The following result will be used throughout, which is a generalization of both~\cite[Proposition 7]{gm} and~\cite[Lemma V.3.1]{may}. 
\begin{proposition}
%[Definition-Proposition]
\l{whcg} 
Let $H<G$ be a subgroup. There are two adjoint functors
\begin{gather*}
{\mathcal L}: \cg\ct\to \WH\ct;\ \cx\mapsto \cx(G/H),\\
{\mathcal R}: \WH\ct\to \cg\ct;\ Y\mapsto \bigl(S\mapsto \M_{\WH\ct}(S^H_+,Y)\bigr)
\end{gather*}
such that 
%\bea
%\H_{\WH\ct} ({\mathcal L}\cx,U)\cong \H_{\cg\ct}(\cx,{\mathcal R}U).
%\eea
%Considering the topology, one has a homeomorphism
$$\M_{\WH\ct} ({\mathcal L}\cx,Y)\cong \M_{\cg\ct}(\cx,{\mathcal R}Y).$$
\end{proposition}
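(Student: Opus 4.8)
The plan is to exhibit an explicit unit and counit and check the triangle identities, mimicking the space-level adjunction of Lemma~\ref{RLspace} but now in the functor categories $\cg\ct$ and $\WH\ct$. First I would describe the two functors more concretely using Remark~\ref{GF&CG}: a $\cg$-space $\cx$ is determined by its values $\cx(G/K)$ on orbits, and ${\mathcal L}\cx=\cx(G/H)$ carries a $\WH$-action because $\M_{\gf}(G/H,G/H)=\WH$; the functoriality in $Y\mapsto {\mathcal R}Y$ is that of Proposition~\ref{whcg} as stated, with $({\mathcal R}Y)(S)=\M_{\WH\ct}(S^H_+,Y)$ and the evident structure maps induced by $f\mapsto f^H$ for $G$-maps $f\colon S\to T$ (a $\WH$-map on $H$-fixed points). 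The key point making ${\mathcal R}Y$ a genuine $\cg$-space is that $(-)^H$ turns disjoint unions of $G$-sets into disjoint unions of $\WH$-sets, so $S^H_+$ turns them into wedges and $\M_{\WH\ct}(-,Y)$ turns wedges into products.

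Next I would construct the unit and counit. For the counit $\varepsilon\colon {\mathcal L}{\mathcal R}Y\to Y$, note ${\mathcal L}{\mathcal R}Y=({\mathcal R}Y)(G/H)=\M_{\WH\ct}((G/H)^H_+,Y)$; since $(G/H)^H$ contains the $\WH$-fixed point $eH$ and more generally $(G/H)^H\cong \WH$ as a $\WH$-set, evaluation at $eH$ gives a $\WH$-map $\varepsilon_Y\colon \M_{\WH\ct}((G/H)^H_+,Y)\to Y$, $\phi\mapsto \phi(eH)$. For the unit $\eta\colon \cx\to {\mathcal R}{\mathcal L}\cx$, I must produce for each finite $G$-set $S$ a based map $\cx(S)\to \M_{\WH\ct}(S^H_+,\cx(G/H))$; by adjunction this is the same as a $\WH$-map $S^H_+\wedge \cx(S)\to \cx(G/H)$, equivalently, for each point $s\in S^H$ a map $\cx(S)\to \cx(G/H)$, which I take to be $\cx(\hat s)$ where $\hat s\colon G/H\to S$ is the $G$-map determined by $s$ (well-defined since $s$ is $H$-fixed); naturality in $s$ under the $\WH$-action and compatibility with the structure maps of $\cx$ (a contravariant functor on $\gf$) then give the required $\cg$-map $\eta_\cx$.

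The remaining work is the two triangle identities $\varepsilon_{{\mathcal L}\cx}\circ {\mathcal L}\eta_\cx=\id$ and ${\mathcal R}\varepsilon_Y\circ \eta_{{\mathcal R}Y}=\id$. The first unwinds exactly to the composite in~\er{identity} of Lemma~\ref{RLspace}: evaluating the constructed maps at the orbit $G/H$ and tracing $x\mapsto (\text{map classified by }eH)\mapsto$ its value at $eH$, which is $x$. The second is dual and amounts to the statement that for a $\WH$-map $U\to \cx(G/H)$ (a point of $({\mathcal R}{\mathcal L}\cx)(\text{whatever})$ after adjunction), restricting along $(G/H)^H\cong \WH$ and re-expanding recovers the original. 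In both cases I would reduce to orbits via Remark~\ref{GF&CG}, so the verifications become the pointwise identities already handled in Lemma~\ref{RLspace}, now merely decorated with the extra variable $S$ and the wedge/product bookkeeping. The main obstacle I anticipate is purely bookkeeping: making the unit $\eta_\cx$ manifestly a morphism of $\cg$-spaces, i.e.\ checking that $\eta$ commutes with all structure maps $\cx(T)\to \cx(S)$ for $G$-maps $S\to T$, which requires knowing that passing to $H$-fixed points is compatible with composition $G/H\xrightarrow{\hat s} S\to T$; this is where one uses, as in the preceding lemma's proof, that $(-)^H$ is a functor (indeed a right adjoint) and hence preserves the relevant diagrams. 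Continuity of all maps is automatic since everything is built from evaluation and composition in topological mapping spaces.
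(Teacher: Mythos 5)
Your proposal is correct and takes essentially the same approach as the paper's (terse) proof: the paper simply records that the counit $\cl{\mathcal R}Y\to Y$ is the identity and that the adjunction bijection $\M_{\cg\ct}(\cx,{\mathcal R}Y)\to\M_{\WH\ct}(\cl\cx,Y)$ is given by evaluation at $G/H$, and you unwind this into the explicit unit/counit formulation with the triangle identities.

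One small slip worth correcting: $eH$ is \emph{not} a $\WH$-fixed point of $(G/H)^H$. Indeed $(G/H)^H=NH/H\cong\WH$ as a free transitive $\WH$-set, so it has no fixed points unless $\WH$ is trivial. This does not affect your argument, because the justification you actually need is the second one you give, namely the identification $(G/H)^H\cong\WH$: from it, evaluation at $eH$ is the canonical isomorphism $\M_{\WH\ct}(\WH_+,Y)\cong Y$, and (after sorting out the conversion of the right $\WH$-action coming from the contravariant functoriality of $\cx$ into a left action) it is $\WH$-equivariant, which is exactly the paper's assertion that the counit is the identity.
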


\begin{proof} The proof is similar to those for the above-mentioned results.
%almost the same as that in~\cite[Proposition 7]{gm}, especially the part for Bredon contravariant coefficient system.
%, since we only consider that direction here.
In particular, the counit $\cl{\mathcal R} Y\to Y$ is the identity, and the map 
\begin{equation}\l{(G/H)}
\M_{\cg\ct}(\cx,{\mathcal R}Y)\to \M_{\WH\ct} ({\mathcal L}\cx,Y)
\end{equation}
is obtained by applying $\cl$, which is taking the $(G/H)$  component. 
\end{proof}

\section{$\varOmega$-$\cg$-spectra and equivariant homology}

%We consider the category $G\cu$ of $G$-topological spaces with $G$-maps. It is complete and cocomplete together with internal Hom's. It has a model structure. In particular, the weak equivalence consists of $f: X\to Y$ such that $f^H:X^H\to Y^H$ are weak equivalence for each $H<G$. (However, when defining (co)homology, we introduce a more generalized concept of coefficients, which are coefficient systems and Mackey functors, instead of just $G\ca b$.)

%A great deal of equivariant homotopy and (co)homology theory can be developed in terms of the category $\cg\ct$ of ``systems of fixed point sets''. There is a functor 
%$$\Phi: G\ct\to \cg\ct;\ X\mapsto ((S\in \gf)\mapsto \M_{G\cu}(S,X)),$$
%where the equivariant mapping space $\M_{G\cu}(S,X)=\M_{G\ct}(S_+,X)$
%is based by the constant map to the base point. Often we abuse the notation and write $X$ for $\Phi X\in \cg\ct$. 

It is well-known that many of the concepts of equivariant homotopy theory can be carried over to the context of $\cg$-spaces.
We start this section by introducing the notions of $\cg$-prespectrum and $\varOmega$-$\cg$-spectrum, as generalizations of the more classical notions of $G$-prespectrum and $\Omega$-$G$-spectrum (see the fundamental work~\cite{lms} about these more classical objects). Then we  use the construction of the 
topological Mackey functor  $X\hot M$ (for a based $G$-space $X$ and a Mackey functor $M$) to define a $\cg$-prespectrum $(\varSigma^\infty X)\hot M$.
In Theorem~\ref{loopspace} we show that $(\varSigma^\infty X)\hot M$ is an $\varOmega$-$\cg$-spectrum when $X$ is a based $G$-CW complex. 
%, which is one of the most important properties of our stable equivariant abelianization functor. 

We start by defining some basic structures of $\cg\ct$ in order to formulate our results. 

\begin{definition} \emph{Smash products} in the category $\cg\ct$
%has smash products %, which are defined component-wise, i.e.
are defined, in view of Remark~\ref{GF&CG}, by 
$$(\cx\wedge \cy)(G/H):=\cx(G/H)\wedge \cy(G/H).$$
\emph{Internal Hom's} in $\cg\ct$ are defined by 
\begin{equation}\l{internalhom}
\ch om(\cx,\cy)(S):=\M_{\cg\ct}(\cx\wedge \Phi(S_+),\cy)
\end{equation} 
for $S\in G\cf$. (We will often abuse notation by omitting $\Phi$.)
%%%%%%%%%%% remark %%%%%%%%%%%%
%%% A good thing here is that the internal Hom transforms disjoint union to products: $\vee$ is still categorical coproduct.
%%%%%%%%%%%%%%%%%%%%%%%%%%%%%%%
\end{definition}

\begin{proposition} One has the following adjunction
\begin{equation}\l{cgadj}
\M_{\cg\ct} (\cx\wedge \cy,\cz)=\M_{\cg\ct} (\cx,\ch om(\cy,\cz)).
\end{equation}
\end{proposition}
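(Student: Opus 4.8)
The plan is to verify the adjunction $\M_{\cg\ct}(\cx\wedge\cy,\cz)=\M_{\cg\ct}(\cx,\chom(\cy,\cz))$ by unwinding both sides using Remark~\ref{GF&CG}, which lets us describe everything on the orbit category $\cg$. Recall that a $\cg$-space is determined by its restriction to $\cg^{op}$, and a morphism in $\cg\ct$ is a collection of based maps $\cx(G/H)\to\cz(G/H)$, one for each subgroup $H$ (up to conjugacy), natural with respect to the $G$-maps between orbits. So first I would reduce the claim to a compatible family of adjunctions, one for each $G/H$, between based spaces.

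The key computation is to evaluate $\chom(\cy,\cz)$ on orbits. By definition~\er{internalhom}, $\chom(\cy,\cz)(S)=\M_{\cg\ct}(\cy\wedge\Phi(S_+),\cz)$, and taking $S=G/H$ we need to identify $\Phi((G/H)_+)$ as a $\cg$-space. The point is the Yoneda-type fact that, under the identification of Remark~\ref{GF&CG}, $\Phi((G/H)_+)$ is the based $\cg$-space $\Phi(G/H)_+$ corepresenting evaluation at $G/H$: smashing with it and mapping out of the result picks out the $(G/H)$-component, i.e.\ there is a natural isomorphism $\M_{\cg\ct}(\cy\wedge\Phi(G/H)_+,\cz)\cong\M_{\WH\ct}(\cl\cy,\cl\cz)$ where $\cl$ is the $(G/H)$-component functor of Proposition~\ref{whcg} — or more elementarily, $\M_{\cg\ct}(\Phi(G/H)_+\wedge\cy,\cz)\cong$ the set of compatible families $\{\cy(G/K)\to\cz(G/K)\}$ indexed over orbits $G/K$ admitting a $G$-map to $G/H$, which after the standard reindexing is just a morphism of $WH$-spaces. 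Granting this, $\chom(\cy,\cz)(G/H)=\M_{\WH\ct}(\cy(G/H),\cz(G/H))$ as $WH$-spaces, with the $\cg$-structure maps given by restriction along orbit maps.

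With that identification in hand, a morphism $\cx\to\chom(\cy,\cz)$ in $\cg\ct$ is, componentwise, a $WH$-map $\cx(G/H)\to\M_{\WH\ct}(\cy(G/H),\cz(G/H))$ for each $H$, compatible with orbit maps; by the ordinary exponential law $\M_{\ct}(A,\M_{\ct}(B,C))\cong\M_{\ct}(A\wedge B,C)$ (valid in compactly generated based spaces) and its $WH$-equivariant refinement, this corresponds to a $WH$-map $\cx(G/H)\wedge\cy(G/H)\to\cz(G/H)$, i.e.\ to a map $(\cx\wedge\cy)(G/H)\to\cz(G/H)$ by the definition of the smash product in $\cg\ct$. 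Checking that the compatibility conditions on both sides match up — naturality with respect to $G$-maps of orbits — gives exactly the data of a morphism $\cx\wedge\cy\to\cz$ in $\cg\ct$. Assembling these bijections over all $H$ yields the asserted natural isomorphism.

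The main obstacle is bookkeeping rather than anything deep: one must be careful that all the identifications are natural in all variables and, crucially, that the $\cg$-structure maps (i.e.\ the contravariant functoriality along orbit maps $G/K\to G/H$) correspond correctly under the exponential law — in particular that the internal Hom $\chom(\cy,\cz)$, a priori defined as a $\cg$-space via~\er{internalhom}, really has the ``pointwise mapping space'' description above with the expected structure maps. This is precisely where Remark~\ref{GF&CG} and the corepresentability of $\Phi(G/H)_+$ do the work, reducing a potentially fiddly end/coend manipulation to the elementary exponential law applied objectwise. Once that is set up, the verification is routine and can safely be left to the reader.
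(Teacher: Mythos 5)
Your argument hinges on the identification $\chom(\cy,\cz)(G/H)=\M_{\cg\ct}(\cy\wedge\Phi((G/H)_+),\cz)\cong\M_{\WH\ct}(\cy(G/H),\cz(G/H))$, and this identification is false: smashing with the representable $\Phi((G/H)_+)$ does not ``pick out the $(G/H)$-component'' in the sense you use. Two checks. First, take $\cy=\Phi S^0$, the unit for the smash product; then the left-hand side is $\M_{\cg\ct}(\Phi((G/H)_+),\cz)\cong\cz(G/H)$ by the Yoneda lemma (exactly the identification the paper uses to define its unit map), whereas your right-hand side is $\M_{\WH\ct}(S^0,\cz(G/H))\cong\cz(G/H)^{\WH}$; these differ whenever $\WH$ acts nontrivially on $\cz(G/H)$. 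Second, take $\cy=\Phi X$ and $\cz=\Phi Y$ for $G$-spaces $X,Y$; by \er{internalhom1} the correct value is $\M_{G\ct}(X\wedge (G/H)_+,Y)\cong\M_{H\ct}(X,Y)$, the space of $H$-equivariant maps, while your formula predicts $\M_{\WH\ct}(X^H,Y^H)$ --- already for $H=G$ this is $\M_{G\ct}(X,Y)$ versus the space of all maps $X^G\to Y^G$. In fact the objectwise mapping spaces $\M_{\WH\ct}(\cy(G/H),\cz(G/H))$ do not even assemble into a $\cg$-space: for a non-invertible orbit map $G/K\to G/H$ the structure maps of $\cy$ and of $\cz$ both go from the $(G/H)$-value to the $(G/K)$-value, so there is no induced map between the mapping spaces. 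This is precisely why the internal Hom must be defined by \er{internalhom} rather than objectwise, and it is why your subsequent orbit-by-orbit application of the exponential law does not go through: a map $\cx\to\chom(\cy,\cz)$ is not a compatible family of $\WH$-maps $\cx(G/H)\to\M_{\WH\ct}(\cy(G/H),\cz(G/H))$.

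The proposition itself is of course true, but it needs a different argument. The paper's route is to exhibit the unit $\cx(G/H)\cong\M_{\cg\ct}(\Phi((G/H)_+),\cx)\to\chom(\cy,\cx\wedge\cy)(G/H)$, $f\mapsto f\wedge\id$, and the evaluation counit \er{cg-counit}, and to verify the triangle identities. Alternatively, the ``fiddly end/coend manipulation'' you set aside is exactly the short correct proof: $\M_{\cg\ct}(\cx,\chom(\cy,\cz))$ is the end over orbits $G/H$ of $\M_\ct\bigl(\cx(G/H),\M_{\cg\ct}(\cy\wedge\Phi((G/H)_+),\cz)\bigr)$; writing the inner mapping space as an end over orbits $G/K$ of $\M_\ct\bigl(\cx(G/H)\wedge\M_{G\cu}(G/K,G/H)_+\wedge\cy(G/K),\cz(G/K)\bigr)$, exchanging the two ends, and applying the co-Yoneda isomorphism identifying the coend over $G/H$ of $\cx(G/H)\wedge\M_{G\cu}(G/K,G/H)_+$ with $\cx(G/K)$ collapses the expression to $\M_{\cg\ct}(\cx\wedge\cy,\cz)$. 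Either of these works; the pointwise description of $\chom$ does not, so as written your proof has a genuine gap at its central step.
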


\begin{proof} For an orbit $G/H$, define the unit to be 
\begin{align*}
\eta:& \cx(G/H)=\M_{\cg\ct}(\Phi(G/H_+),\cx) \text{ (Yoneda lemma)}\\
\to &\chom (\cy,\cx\wedge \cy)(G/H)=\M_{\cg\ct}( \Phi(G/H_+)\wedge \cy,\cx\wedge \cy); \\
f\mapsto & f\wedge \id;
\end{align*}
and the counit to be
\begin{align}
\mu: & (\chom(\cy,\cz)\wedge \cy)(G/H)=\M_{\cg\ct}(\cy\wedge \Phi(G/H_+),\cz)\wedge \cy(G/H) \label{cg-counit} \\
\to & \cz(G/H);\ h\wedge y\mapsto  h_{G/H}(y\wedge \id).
\nm
\end{align}

Then one can check the usual conditions for adjunction. 
%defined as follows. 
%For a finite $G$-set $S$, we need to define
%$$\cx(S)\to \ch om(\cy,\cz)(S)=\H_\cg(\cy\times S,\cz).$$
%Now by the Yoneda lemma, $\cx(S)=\H_\cg(S,\cx)$. Therefore by composition, one gets the map. 
%For the other direction, suppose that one has a morphism $f$ in $\H_\cg(\cx,\ch om(\cy,\cz))$, and one wants to define a map $\cx\times \cy\to \cz$. For a finite $G$-set $S$, one wants to define $\cx(S)\times \cy(S)\to \cz(S)$. Now $f(S)$ gives $\cx(S)\to Hom_\cg(\cy\times S,\cz)$. Choose $\cy(S)\times (\id:S\to S)$, and the image is one element in $\cz(S)$. 
\end{proof}

%%%%%%%%%%%%% remark about smash product and internal hom %%%%%%%%%%
% These structures are very special. For simplicial sets, even if one mimics the construction, there is no hope that $\cs$, the total singular simplicial set functor, preserves them. Here the point is that for one obit, it is really like one point. 
%%%%%%%%%%%%%%%%%%%%%%%%%%%%%%%%%%%%%%%%%%%%%%%%%%%%%%%%%%%%%%%%%%%%

\begin{lemma} The functor $\Phi$ \er{definephi} 
%clearly can be seen to 
%be fully faithful, to 
preserves smash products 
\begin{equation}
\label{phi-prod}
\Phi(X\wedge Y)=\Phi X\wedge \Phi Y,
\end{equation}
and internal Hom's
\begin{equation}\l{internalhom1}
\Phi  \M_\ct(X,Y)= \chom(\Phi X,\Phi Y),
\end{equation}
where $\M_\ct$ denotes  the internal Hom for $G$-spaces with conjugate $G$-action.
\end{lemma}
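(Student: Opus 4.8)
The plan is to prove the two statements \eqref{phi-prod} and \eqref{internalhom1} separately, each by checking the claimed equality objectwise on orbits $G/H$ and comparing with the defining formulas.

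\textbf{Proof of \eqref{phi-prod}.} First I would evaluate both sides on an orbit $G/H$. By the definition of $\Phi$ in \eqref{definephi} and the standard identification $\M_{G\cu}(G/H,Z)=Z^H$ for a $G$-space $Z$, the left-hand side evaluated at $G/H$ is $(X\wedge Y)^H$. On the right-hand side, the smash product in $\cg\ct$ is defined by $(\Phi X\wedge \Phi Y)(G/H)=\Phi X(G/H)\wedge \Phi Y(G/H)=X^H\wedge Y^H$. So the claim reduces to the elementary fact that taking $H$-fixed points commutes with the (based) smash product: $(X\wedge Y)^H=X^H\wedge Y^H$, which holds because $(X\times Y)^H=X^H\times Y^H$ and the basepoint (being $G$-fixed, hence $H$-fixed) subspace $X\vee Y$ satisfies $(X\vee Y)^H=X^H\vee Y^H$. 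One then checks that the two sides agree on morphisms of $\cg\ct$, i.e., that the identification is natural in $G/H$; this is immediate since restriction of an equivariant map to fixed-point sets is functorial, and by Remark~\ref{GF&CG} it suffices to check this on $\cg$ rather than all of $G\cf$.

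\textbf{Proof of \eqref{internalhom1}.} Here I would again evaluate on $S\in G\cf$ (it is enough to take $S=G/H$). Unwinding the definition \eqref{internalhom} of internal Hom's in $\cg\ct$ and applying \eqref{phi-prod} together with $\Phi(S_+)=\Phi S_+$ as a $\cg$-space, the right-hand side at $S$ is
\[
\chom(\Phi X,\Phi Y)(S)=\M_{\cg\ct}(\Phi X\wedge \Phi(S_+),\Phi Y)=\M_{\cg\ct}(\Phi(X\wedge S_+),\Phi Y).
\]
The left-hand side at $S$ is $\Phi\,\M_\ct(X,Y)(S)=\M_{G\cu}(S,\M_\ct(X,Y))\cong\M_{G\ct}(S_+,\M_\ct(X,Y))$, which by the smash-hom adjunction in $G\ct$ (for the conjugation $G$-action on mapping spaces) is $\M_{G\ct}(S_+\wedge X,Y)=\M_{G\ct}(X\wedge S_+,Y)$. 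So the content of \eqref{internalhom1} is that $\Phi$ induces a bijection
\[
\M_{G\ct}(X\wedge S_+,Y)\;\xrightarrow{\ \cong\ }\;\M_{\cg\ct}(\Phi(X\wedge S_+),\Phi Y),
\]
naturally in $S$. This is the statement that $\Phi\colon G\ct\to\cg\ct$ is full and faithful, which is classical (it is the starting point of Elmendorf's theorem, cf.\ \cite{elmendorf} and \cite[Chapter VI]{may}): a based $G$-map is determined by, and can be prescribed by, its collection of fixed-point maps $\{Z^H\to W^H\}$ compatible with the $\cg$-structure. I would cite this rather than reprove it, noting only that one has to check the bijection is the one induced by applying $\Phi$ and that it is natural in $S$, both of which follow by tracing through the adjunction isomorphisms above.

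\textbf{Main obstacle.} The only real subtlety is bookkeeping: making sure the various adjunction isomorphisms (the Yoneda-type identification $\M_{G\cu}(S,Z)\cong\M_{G\ct}(S_+,Z)$, the smash–hom adjunction in $G\ct$ with the conjugation action, and the definition \eqref{internalhom}) are invoked consistently and that the resulting identification is the map \emph{induced by $\Phi$} and not merely an abstract bijection. The fixed-point and fullness-faithfulness facts themselves are standard and I would treat them as known; in particular, for \eqref{internalhom1} I would reduce everything to the fully faithfulness of $\Phi$, which I take as available from the cited literature, so the proof is essentially a diagram chase rather than new mathematics.
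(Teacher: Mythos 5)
Your proposal is correct and follows essentially the same route as the paper: \eqref{phi-prod} reduces to $(X\wedge Y)^H=X^H\wedge Y^H$, and \eqref{internalhom1} is obtained by the same chain of identifications (Yoneda/fixed-point description, smash--hom adjunction in $G\ct$, then \eqref{phi-prod} and the definition of $\chom$), with the key input being that $\Phi$ is fully faithful. The paper supplies a one-line proof of that full-faithfulness via the left adjoint $\Theta\colon\cx\mapsto\cx(G/e)$ with $\Theta\Phi X=X$, whereas you cite it from the literature; this is a cosmetic difference, not a different argument.
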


\begin{proof} \er{phi-prod} follows from 
$$
(X\wedge Y)^H=X^H\wedge Y^H.
$$
%is clear. Now we prove \er{internalhom1}. 

There is a functor \cite[Lemma V.3.1]{may}
\begin{equation}\label{deftheta}
\Theta:\cg\ct\to G\ct;\ \cx\to \cx(G/e),
\end{equation}
which is left adjoint to $\Phi$ in \er{definephi}. Clearly $\Theta\Phi X=X$. Therefore 
\begin{equation}\l{fully-faithful}
\M_{\cg\ct}(\Phi X,\Phi Y)\cong \M_{G\ct}(\Theta\Phi X,Y)=\M_{G\ct}(X,Y).
\end{equation}

On a finite $G$-set $S$, \er{internalhom1} is defined by 
\begin{align*}
&\Phi  \M_\ct(X,Y)(S)\os{\er{definephi}}=\M_{G\ct}(S_+,\M_\ct(X,Y))\\
=&\M_{G\ct}(X\wedge S_+,Y)\os{\er{fully-faithful}}=\M_{\cg\ct}(\Phi(X\wedge S_+),\Phi Y)\\
\os{\er{phi-prod}}=&\M_{\cg\ct}(\Phi X\wedge \Phi(S_+),\Phi Y)\os{\er{internalhom}}=\chom(\Phi X,\Phi Y)(S),
\end{align*}
where the second equality is by an obvious adjunction in $G\ct$. 
\end{proof}

Moreover,
$\cg\ct$  has a model category structure in which the weak equivalences and fibrations are defined component-wise \cite[Chapter VI]{may}. Therefore $\Phi$ preserves the model structures by definition. 
%weak equivalences, (and actually (co)fibrations almost by definition - it is a Quillen equivalence). 

%{\sl I believe it can be checked that this defines a Mackey functor.}

%Since we are heading for stable results, we start to work with {\sl based} $G$-spaces and {\sl based} finite $G$-sets. All our previous  constructions have the obvious analogues in the based case. 

\begin{definition}\l{cgstuff}
Fix a complete $G$-universe ${\bf U}$ and a cofinal set $\ca$ of indexing spaces, which are defined to be finite dimensional sub $G$-spaces of ${\bf U}$ (see  \cite[Chapter XII]{may}). 
%n indexing set $\ca$ \cite[Chapter IX]{may}, we d
Define a $\cg$-\emph{prespectrum} to be a collection of $\cg$-spaces $\{\cx_V\}_{V\in \ca}$ such that for $W\in \ca$, one has the following structure map 
\begin{equation}\l{structuremap}
\sigma_{V,W}:\Phi S^W\wedge \cx_V\rightarrow \cx_{V+W}
\end{equation}
as a map of $\cg$-spaces. The structure maps $\sigma$ are required to satisfy $\sigma_{V,0}=\id$ and the expected transitivity condition: 
$$
\xymatrix{\Phi S^U\wedge \Phi S^W\wedge\cx_V \ar[r]^{\id\wedge\sigma_{V,W}}\ar[d]^{{\er{phi-prod}}} & \Phi S^U\wedge \cx_{V+W}\ar[d]^{\sigma_{V+W,U}}\\
\Phi S^{U+W}\wedge\cx_V\ar[r]^{\sigma_{V,U+W}} &\cx_{V+W+U}.
}
$$
%where the wedge product is defined component-wise. 

For a $\cg$-space $\cx$, we call $\varOmega^W\cx:=\chom(\Phi S^W,\cx)$ the $W$th \emph{loop space} of $\cx$ as a $\cg$-space.
We call the $\cg$-prespectrum $\{\cx_V\}_{V\in \ca}$ an $\varOmega$-$\cg$-\emph{spectrum} if the adjoint map
$$\tau_{V,W}: \cx_V\to \varOmega^W \cx_{V+W}$$
associated to~\er{structuremap} by \er{cgadj} is a $\cg$-weak equivalence.
%, which by definition is a morphism which is a weak equivalence on all the components. 
\end{definition}

Let $X$ be a based $G$-space, and $M$ a Mackey functor. We consider the following natural $\cg$-prespectrum $(\varSigma^\infty X)\widetilde\otimes M$ with its $V$th space as 
the topological Mackey functor $(\varSigma^V X)\widetilde\otimes M$, which is thus a $\cg$-space. Understanding $X$ generally (i.e., sometimes as $\varSigma^V X$), there is the natural structure map
$$\psi:\Phi S^W\wedge (X\widetilde\otimes M)\to (\varSigma^W X)\widetilde\otimes M,$$
which is defined as follows. For an orbit $G/H$, in view of \er{onsets}, we define
\begin{gather}
\psi_{G/H}:\M_{G\ct} ({G/H}_+,S^W)\wedge ((X\wedge {G/H}_+)\hogf M)\to (\varSigma^{W}X\wedge {G/H}_+)\hogf M;\nm\\
\alpha\wedge \xi\mapsto (((\alpha\circ pr_2)\wedge \id)\hogf \id)(\xi)
%(\alpha,[((\beta:S\to X\wedge X),(m\in M(S)))])\to [((\alpha\circ pr_2\circ \beta)\wedge \beta:S\to S^W\wedge X\wedge {G/H}_+),(m\in M(S))].
\label{defnpsi}
\end{gather}
%Actually, the map is given 
by the functoriality of our coend construction, where 
%Given a map $\alpha:{G/H}\to S^W$, the map is given by the functoriality of the coend construction by the map
$$(\alpha\circ pr_2)\wedge \id:X\wedge {G/H}_+\to \varSigma^{W} X\wedge {G/H}_+$$
is the $G$-map defined by $x\wedge t\mapsto\alpha(t)\wedge x\wedge t$.
%(It is very easy to check that $\psi$ is natural with respect to a map $\alpha:U\to {G/H}$.)

Since we are interested in the adjoint of $\psi$ under the adjunction \er{cgadj}:
\begin{equation}
\l{loopmap}
\phi: X\widetilde\otimes M\to \varOmega^W((\varSigma^W X)\widetilde\otimes M),
\end{equation}
we now write it out in detail. Given a finite $G$-set $T$, the value of $\phi$ at $T$ is a map
$$\phi_T:(X\widetilde\otimes M)(T)\to \varOmega^W((\varSigma^W X)\widetilde\otimes M)(T),$$
which is natural in $T$. By the Yoneda lemma and the definition of internal Hom \er{internalhom}, 
$\phi_T$ can be described as a map
$$\phi_T: \M_{\cg\ct}(\Phi(T_+),X\widetilde\otimes M)\to \M_{\cg\ct}(\Phi S^W\wedge \Phi(T_+), (\varSigma^W X)\widetilde\otimes M),$$
such that the image of $\beta\in \M_{\cg\ct}(T_+,X\widetilde\otimes M)$ is the following composition:
$$\Phi S^W\wedge \Phi(T_+)\os{\id\wedge \beta}\longrightarrow \Phi S^W\wedge (X\widetilde\otimes M)\os\psi\to (\varSigma^W X)\widetilde\otimes M.$$

The following result describes one of the most important properties of our stable equivariant abelianization functor.
\begin{theorem}
\l{loopspace} 
For a based $G$-CW complex $X$, the $\cg$-prespectrum $(\varSigma^\infty X)\hot M$ is an $\varOmega$-$\cg$-spectrum, i.e., the adjoint map \er{loopmap} is a $\cg$-weak equivalence.
\end{theorem}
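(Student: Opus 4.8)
The plan is to reduce the statement, via the structure theorem for Mackey functors of Greenlees--May \cite{gm}, to the case $M=\calr A$ for a $\WH$-module $A$, and then to invoke the nonequivariant-style results of \cite{ds} transported through the isomorphism $\widetilde\varrho$ of Proposition~\ref{identify}. More precisely, I would first observe that the whole construction $X\hot M$, the prespectrum $(\varSigma^\infty X)\hot M$, and the map $\phi$ of \er{loopmap} are all \emph{exact} in the Mackey-functor variable: this follows from Proposition~\ref{funct} (exactness of $X\ot-$, hence of $X\hot-$ since the splitting \er{splitting} is natural) together with the fact that $\varOmega^W=\chom(\Phi S^W,-)$ and $\Phi S^W\wedge-$ are computed componentwise and so preserve the relevant exact sequences of topological Mackey functors. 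Being a $\cg$-weak equivalence is detected componentwise on $G/H$, and on each component $\varOmega^W$ is an ordinary loop functor on based spaces; so ``$\phi$ is a $\cg$-weak equivalence'' is, after passing to homotopy groups, a statement about a map of long exact sequences of abelian groups. Hence, by a five-lemma argument applied to a short exact sequence of Mackey functors, the class of $M$ for which the theorem holds is closed under extensions, kernels, and cokernels.

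Next I would set up the base case. For $M=\calr A$ with $A$ a $\WH$-module, Proposition~\ref{identify} gives a natural isomorphism $\widetilde\varrho\colon X\widetilde\ot\calr A\xrightarrow{\cong}\calr(X^H\widetilde\ot_\calf A)$ of topological Mackey functors, and it must be checked (routine, from the definitions of the structure maps \er{defnpsi} and the FSP structure $L$ in \er{defineL}) that this isomorphism is compatible with the $\cg$-prespectrum structure maps, i.e. identifies $(\varSigma^\infty X)\widetilde\ot\calr A$ with $\calr$ applied to the $\WH$-prespectrum $\{S^V{}^H\widetilde\ot_\calf A\}=\{(\varSigma^{V^H}X^H)\widetilde\ot_\calf A\}$ built from the FSP of \cite{ds}. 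Then I would use the adjunction of Proposition~\ref{whcg}: since $\varOmega^W$ in $\cg\ct$ corresponds under $\calr$ (up to the fixed-point behaviour $(S^W)^H=S^{W^H}$) to $\varOmega^{W^H}$ in $\WH\ct$, and since $\calr$ preserves componentwise weak equivalences, the map $\phi$ for $M=\calr A$ is $\calr$ applied to the corresponding structure map for the $\WH$-FSP. That latter map is a $\WH$-weak equivalence precisely because, by the main result of \cite{ds}, $\{S^V\widetilde\ot_\calf A\}_V$ is an Eilenberg--Mac~Lane $\Omega$-spectrum for $\calr A$ (here with group $\WH$, and using that $X$ a $G$-CW complex forces $X^H$ to be a $\WH$-CW complex). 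This settles the base case.

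To run the induction, I would recall (from Section~3, as promised after Example~\ref{realexample}) the precise sense in which the images of the $\calr_H^G$ generate $\cm k$: every Mackey functor $M$ sits in a finite filtration, or can be resolved by, extensions of functors of the form $\calr_H^G A$. Concretely, for each $H$ the counit/unit of the adjunction $(\cl,\calr)$ gives a natural map $M\to\calr_H^G(M(G/H))$ whose kernel and cokernel are ``supported away from $H$'', so iterating over a filtration of the subgroup lattice (say by order of $H$, largest first) expresses $M$ through finitely many short exact sequences whose outer terms are eventually in the image of some $\calr_H^G$. Combining this filtration with the extension/kernel/cokernel closure from the first paragraph and the base case from the second paragraph yields the theorem for all $M$.

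The main obstacle I expect is the second paragraph: verifying that $\widetilde\varrho$ is not merely an isomorphism of topological Mackey functors but is \emph{compatible with all the prespectrum structure maps} \er{defnpsi}, and tracking how $\varOmega^W$ on $\cg$-spaces corresponds, component by component via Proposition~\ref{whcg}, to $\varOmega^{W^H}$ on $\WH$-spaces, so that the $\cg$-weak equivalence statement for $\phi$ genuinely reduces to the $\WH$-weak-equivalence statement proved in \cite{ds}. The bookkeeping with fixed points of representation spheres, $(S^V)^H=S^{V^H}$, and the requirement that $\ca$ restricts to a cofinal set of $\WH$-indexing spaces, is where the argument is most delicate; everything else is either cited or a diagram chase.
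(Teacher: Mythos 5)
Your proposal follows the same two-step strategy as the paper: reduce to $M=\calr A$ via the Greenlees--May structure theorem plus a five-lemma argument, and handle the base case $M=\calr A$ by transporting the result of \cite{ds} through the isomorphisms of Propositions~\ref{identify} and~\ref{whcg}. The base case is set up correctly (the paper spells out the computation of the maps $\tau_T$, $\chi_T$ in the explicit three-row diagram \eqref{thm-loopspace-diagram}, but this is the same reduction you describe), and the overall skeleton is right.

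There is, however, a genuine gap in the induction step. You pass from exactness of $X\hot-$ (Proposition~\ref{funct}) to ``a map of long exact sequences of abelian groups on homotopy groups,'' tacitly assuming that a short exact sequence of \emph{topological} abelian groups is a fibration sequence and hence yields a long exact sequence of homotopy groups. This is not automatic: it holds for simplicial abelian groups (this is exactly \cite[III.2.10]{gj}), but not a priori for the topological abelian groups $(X\wedge T_+)\hot_{G\calf}M$. The paper bridges this with a detour: it forms the short exact sequence of simplicial abelian groups $\cs X_T\hot_{G\calf}M\to\cs X_T\hot_{G\calf}N\to\cs X_T\hot_{G\calf}P$ (which \emph{is} a fibration sequence), realizes it, and compares to the topological construction via the $G$-homotopy equivalences $|\cs X_T\hot_{G\calf}M|\simeq X_T\hot_{G\calf}M$ valid for $X$ a $G$-CW complex (cited from \cite{n}). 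This is where the $G$-CW hypothesis enters the induction step, and it is not a ``routine'' verification. Relatedly, your claim that $\varOmega^W=\chom(\Phi S^W,-)$ ``preserves exact sequences'' is not the right statement: as an internal Hom it is only left exact, and what is actually used is that it preserves \emph{fibration} sequences, which you may apply only after the fibration property has been established by the simplicial argument above.
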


\begin{proof} The strategy of the proof is to use a structure theorem for Mackey functors by Greenlees and May \cite{gm} to reduce to the case 
that $M={\mathcal R} A$, where $A$ is a $W\!H$-module, as in Example~\ref{realexample}. Using Propositions \ref{identify} and \ref{whcg}, 
we will further reduce this 
%the case  $M={\mathcal R}  
to the case handled in \cite{ds} and recalled in Example~\ref{G-modules} with the group replaced by $\WH$. The case of a 
general  Mackey functor $M$ is then  completed by an application of the 5-lemma. 

Let $M={\mathcal R} A$ be as in Example~\ref{realexample}. Since $\cg$-weak equivalences are defined component-wise, we need to show that, for each finite $G$-set $T$, 
the map $\phi_T$ in the top row of  the following commutative diagram is a weak equivalence of spaces (for notational simplicity, we will omit the $\Phi$'s in this proof): 
\begin{equation}
\label{thm-loopspace-diagram}
\xymatrix{
\M_{\cg\ct}(T_+,X\widetilde\otimes {\mathcal R} A)\ar[r]^(.4){\phi_T}\ar[d]^{\er{identify}} & \M_{\cg\ct}(S^W\wedge T_+,(\varSigma^WX)\hot\calr A)\ar[d]^{\er{identify}}\\
\M_{\cg\ct}(T_+,{\mathcal R}(X^H\widetilde\otimes_\cf A))\ar[r]^(.4){\tau_T}\ar[d]^{\er{whcg}} & \M_{\cg\ct}(S^W\wedge T_+, 
\calr((\varSigma^{W^H}X^H)\hot_\cf A))\ar[d]^{\er{whcg}}\\
\M_{\WH\ct}(T^H_+,X^H\widetilde\otimes_\cf A)\ar[r]^(.4){\chi_T} & \M_{\WH\ct}(S^{W^H}\wedge T_+^H,\varSigma^{W^H}X^H\widetilde\otimes_\cf A).
}
\end{equation}
The vertical arrows in the above diagram are homeomorphisms for a general space $X$ given by Propositions \ref{identify} and \ref{whcg}, and
the maps $\tau_T$ and $\chi_T$ are completely determined by the commutativity of the diagram. 

For each $\gamma\in\M_{\cg\ct}(T_+,{\mathcal R}(X^H\hot_\cf A))$, the commutativity of diagram \eqref{thm-loopspace-diagram} implies that the value $\tau_T(\gamma)$ is a composite of the form
\begin{equation}\label{def-tau-gamma}
S^W\wedge T_+\os{\id\wedge \gamma}\to S^W\wedge\calr(X^H\hot_\cf A)\os\upsilon\to \calr((\varSigma^{W^H}X^H)\hot_\cf A),
\end{equation}
where  $\upsilon$ is determined by the commutativity of the following diagram:  
$$
\xymatrix{
S^W\wedge (X\widetilde\otimes {\mathcal R} A)\ar[r]^\psi\ar[d]^{\id\wedge \wt\varrho} & (\varSigma^WX)\hot \calr A\ar[d]^{\wt\varrho}\\
S^W\wedge {\mathcal R}(X^H\widetilde\otimes_\cf A)\ar[r]^\upsilon & {\mathcal R}((\varSigma^{W^H}X^H)\hot_\cf A).
}
$$
Recall that $\wt\varrho$ is the isomorphism defined in Proposition~\ref{identify}.  On an orbit $G/K$, this reads
$$
\xymatrix{
\M_{G\ct}(G/K_+,S^W)\wedge ((X\wedge G/K_+)\hot_{G\cf} \calr A)\ar[r]^(0.57)\psi\ar[d]^{\id\wedge\wt\varrho_{G/K}} & (\varSigma^WX\wedge {G/K}_+)\hot_{G\cf} \calr A\ar[d]^{\wt\varrho_{G/K}}\\
\M_{G\ct}({G/K}_+,S^W)\wedge \M_{\WH\ct}({(G/K)}_+^H, X^H\hot_{\cf} A)\ar[r]^(0.57)\upsilon & \M_{\WH\ct}({(G/K)}_+^H, (\varSigma^{W^H}X^H)\hot_{\cf} A).
}
$$
Unraveling the definitions of $\psi$ in \eqref{defnpsi} and  $\varrho$ in \er{deff}, it is easy to see that for 
$\alpha\in\M_{G\ct}({G/K}_+, S^W)$, $\xi\in\M_{\WH\ct}({(G/K)}^H_+,X^H\hot_{\calf}A)$ and $u\in {(G/K)}_+^H$, we have
\begin{equation}
\label{eq-formula-upsilon}
\upsilon(\alpha\wedge\xi)(u)=(f_{\alpha(u)}\hot_\calf\, \id_A)(\xi(u)),
\end{equation}
where $f_{\alpha(u)}\colon X^H\to \varSigma^{W^H}X^H$ is the map $x \mapsto \alpha(u)\wedge x$.

%%%%%%%%%%%%%%%% More details %%%%%%%%%%%%%%%%%%%%%%%%
%The map $\varrho$ is defined by taking fixed point set under $H$. The relation between the tensor and the map is given by inverse image. 
%%%%%%%%%%%%%%%%%%%%%%%%%%%%%%%%%%%%%%%%%%%%%%%%%%

Next we  compute the map $\chi_T$ in diagram \eqref{thm-loopspace-diagram}. Denoting by $\delta=\lad(\gamma)\in \M_{\WH\ct}(T^H_+,X^H\wt\otimes_\cf A)$ the left adjunct
of  $\gamma\in\M_{\cg\ct}(T_+, \calr(X^H\hot_\cf A))$ under the adjunction $\call\vdash\calr$ in Proposition \ref{whcg}, we have
$\chi_T(\delta)=\chi_T(\lad(\gamma)) =\lad(\tau_T(\gamma))$, by the required commutativity of diagram \eqref{thm-loopspace-diagram}. 
%the definition of $\chi$. 
Since the counit of the adjunction is the identity \er{(G/H)}, we have
$$
\lad(\tau_T(\gamma))=\call(\tau_T(\gamma))\os{\eqref{def-tau-gamma}}= \call(\upsilon)\circ \call(\id\wedge\gamma).
$$
Therefore, $\chi_T(\delta)$ is given by 
%the value $\chi_T(\delta)$ of $\chi_T$ at 
%$\delta=\lad(\chi)\in \M_{\WH\ct}(T^H_+,X^H\otimes_\cf A)$ is given by 
$$
S^{W^H}\wedge T_+^H\os{\id\wedge \delta}\to S^{W^H}\wedge (X^H\hot_\cf A)\os{\call(\upsilon)}\to (S^{W^H}\wedge X^H)\hot_\cf A.
$$
Applying the formula  \eqref{eq-formula-upsilon}, we obtain $\call(\upsilon)(s\wedge\xi)=(f_s\hot_\calf \id_A)(\xi)$, where 
$f_s$ denotes the map $X^H\to S^{W^H}\wedge X^H$ given by $x\mapsto s\wedge x$. 

We conclude that $\chi_T$ is composition with  the adjoint map 
$$
X^H\hot_\calf A\to\Omega^{W^H} ((\varSigma^{W^H}X^H)\hot_\calf A)
$$
of the  $\WH$-prespectrum $
%H\calr A=
\{V\mapsto \varSigma^VX^H\hot_\calf A\}.$
% that we discussed in Example~\ref{G-modules} (here $G=\WH$). 
Since it  is shown in \cite{ds} that %$H\calr A$ 
this is an $\Omega$-$\WH$-spectrum for a $G$-CW complex $X$, it follows  $\chi_T$ is a weak equivalence for each $T$.
Therefore $\phi$ \er{loopmap} is $\calg$-weak equivalence.   

Now for the general case of $M$, we use the structure theorem for Mackey functors of \cite{gm}. This result states that the class of Mackey
functors containing the functors $\{\calr A \}$, for all subgroups $H<G$ and all $WH$-modules $A$, and satisfying the $2$ out of $3$ property for short exact sequences  contains all
Mackey functors. 
Since we have shown that the class of Mackey functors for which \er{loopmap} is a $\cg$-weak equivalence contains Mackey functors of the form $\calr A$, 
%er{loopmap} is a $\cg$-weak equivalence for Mackey functors of the form $\calr A$, 
%Hence, to complete the proof of the present theorem, 
it suffices now to show that this class
%of Mackey functors for which \er{loopmap} is a $\cg$-weak equivalence 
satisfies the $2$ out of $3$ property for short exact sequences.

Let 
$$
0\to M\to N\to P\to 0
$$
be a short exact sequence of Mackey functors. First we show that, for a $G$-CW complex $X$, one gets a fibration sequence of $\cg$-spaces
$$
X\ot M\to X\ot N\to X\ot P.
$$
This means that, for each finite $G$-set $T$, the  resulting sequence of topological abelian groups (see \er{onsets})
$$
(X\wedge T_+)\hot_{G\calf} M\to (X\wedge T_+)\hot_{G\calf} N\to (X\wedge T_+)\hot_{G\calf} P
$$
is a fibration sequence of topological spaces. To prove this, we use the following detour into simplicial sets and the realization functor. 
Let $X_T$ denote $X\wedge T_+$ and 
$\cs X_T$ denote the total singular simplicial set of $X_T$. Then one has an exact sequence of simplicial abelian groups 
$$
0\to \cs X_T\hot_{G\calf} M\to \cs X_T\hot_{G\calf} N\to \cs X_T\hot_{G\calf} P\to 0.
$$
(This is a simplicial version of Proposition \ref{funct}.) 
By basic simplicial homotopy theory \cite[III.2.10]{gj}, this is then a fibration sequence. Since geometric realization is a Quillen equivalence between the 
categories of simplicial sets and topological spaces \cite{gj}, we have the following diagram 
\begin{equation}
\label{diagram-|-|}
\xymatrix{
|\cs X_T\hot_{G\calf} M|\ar[r]\ar[d] & |\cs X_T\hot_{G\calf} N|\ar[r]\ar[d] & |\cs X_T\hot_{G\calf} P|\ar[d]\\
X_T\hot_{G\calf} M\ar[r] & X_T\hot_{G\calf} N\ar[r] & X_T\hot_{G\calf} P,
}
\end{equation}
where the first row is a fibration sequence. For a $G$-CW complex $X$, the vertical maps are $G$-homotopy equivalences \cite{n}. Therefore the second row is also a fibration sequence. 

Since internal Hom's clearly preserve fibration sequences, we have the following diagram of fibration sequences
$$\xymatrix{
X\ogf M\ar[r]\ar[d]^{\phi} & X\ogf N\ar[r]\ar[d]^{\phi} & X\ogf P\ar[d]^{\phi}\\
\varOmega^W((\varSigma^W X)\widetilde\otimes M)\ar[r] & \varOmega^W((\varSigma^W X)\widetilde\otimes N)\ar[r] & \varOmega^W((\varSigma^W X)\hot P)
}
$$
If two out of the three vertical maps are weak equivalences, it follows by the $5$-lemma that so is the third.
This concludes the proof of the theorem.
\end{proof}

%\begin{corollary} $X\widetilde\otimes M$ is a $\cg$ infinite loop space, and $(\varSigma^\infty X)\widetilde\otimes M$ is a $\cg$ spectrum. 
%\end{corollary}

%\begin{theorem} For $X$ a $G$-CW complex, one has the following isomorphism

We can now prove  the following $RO(G)$-graded version of the classical Dold-Thom theorem, as an application of our stable equivariant abelianization functor.

\begin{theorem}
\l{rogdt} 
Let $X$ be a based $G$-CW complex. There is a natural isomorphism
$$
\pi_V^\cg(X\wh\otimes M):=[\Phi S^V,X\hot M]_{\cg\ct}\cong \t H_V^G(X;M),
$$
where 
the $[-,-]_{\cg\ct}$ denotes based homotopy classes of $\cg$-maps,  
and the right hand side is the $RO(G)$-graded equivariant homology of $X$ with coefficients in $M$. 
\end{theorem}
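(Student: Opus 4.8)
The plan is to deduce Theorem~\ref{rogdt} from Theorem~\ref{loopspace} by a standard adjunction-and-stabilization argument, once the right-hand side $\t H_V^G(X;M)$ has been unwound. Recall that for $V=W_1-W_2\in RO(G)$ (with $W_1,W_2$ actual representations), the $RO(G)$-graded Bredon homology is defined by stabilization: $\t H_V^G(X;M)=\operatorname{colim}_{Z}\t H_{|W_1+Z|}^G(S^{W_2+Z}\wedge X;M)$, where $Z$ ranges over the indexing spaces of the universe $\bf U$ and $|{-}|$ denotes (non-equivariant) dimension, and the unstable groups are just Bredon homology $\t H_n^G(-;M)$ of the underlying covariant coefficient system of $M$. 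The first step is therefore purely formal: by the suspension isomorphism in $\cg$-homotopy coming from the $\varOmega$-$\cg$-spectrum structure, $[\Phi S^V,X\hot M]_{\cg\ct}$ is itself computed as the analogous colimit $\operatorname{colim}_Z[\Phi S^{W_1+Z},(\varSigma^{W_2+Z}X)\hot M]_{\cg\ct}$, since by Theorem~\ref{loopspace} the adjoint maps $\tau_{V,W}$ are $\cg$-weak equivalences and hence induce bijections on $[\Phi S^{?},-]_{\cg\ct}$. So it suffices to prove the \emph{unstable} statement: for an actual $G$-representation $W$ and a based $G$-CW complex $Y$,
\begin{equation*}
[\Phi S^n,(\varSigma^W Y)\hot M]_{\cg\ct}\cong \t H^G_{n}(\varSigma^W Y;M)
\end{equation*}
compatibly with suspension, where $S^n$ carries the trivial $G$-action. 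Actually the cleanest route is to reduce all the way down: it suffices to show, for any based $G$-CW complex $Z$ and any integer $n\ge 0$,
\begin{equation*}
[\Phi S^n, Z\hot M]_{\cg\ct}\cong \t H^G_n(Z;M),
\end{equation*}
naturally in $Z$ and compatibly with the $\cg$-prespectrum structure maps; the $RO(G)$-graded statement then follows by applying this with $Z=\varSigma^{W_2+Z}X$ and passing to the colimit, matching the two colimit systems term by term.

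For the displayed unstable isomorphism I would argue as follows. Since $S^n$ has trivial action, $\Phi S^n$ is the constant $\cg$-space $G/H\mapsto S^n$, so $[\Phi S^n,\cx]_{\cg\ct}$ for any based $\cg$-space $\cx$ is $\pi_n$ of the homotopy limit (equalizer over the orbit category) of $\cx$ — but more usefully, for $\cx$ a topological Mackey functor we can use that $\cx$ is an abelian-group object, so $[\Phi S^n,\cx]_{\cg\ct}\cong \pi_n(\M_{\cg\ct}(\Phi S^n,\cx))$, and I want to identify this with Bredon homology. The key input is the unreduced/reduced comparison and the second author's Theorem in~\cite{n}, equation~\er{nieiso}: for the covariant coefficient system $k=M_*$ underlying $M$, one has $\pi_i(Z\ot_{\gf} M_*)\cong H_i^G(Z;M_*)=H_i^G(Z;M)$ (Bredon homology only sees the covariant part). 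Now $(Z\hot M)(G/e)=Z\hot_{G\cf}M$ up to the splitting, and more generally $\Theta(Z\hot M)=(Z\hot M)(G/e)$; but we must be careful, because $[\Phi S^n,Z\hot M]_{\cg\ct}$ is \emph{not} simply $\pi_n$ of the $G/e$-component. Instead I would use the internal structure: because $Z\hot M$ is built as a coend over $G\cf$, its value on $G/e$ together with the Weyl-group and transfer data recovers the whole Mackey functor, and the homotopy groups $\pi_n((Z\hot M)(G/H))$ assemble into the Mackey-functor-valued Bredon homology $\underline{H}_n^G(Z;M)$ of~\cite{dieck}. Then $[\Phi S^n,Z\hot M]_{\cg\ct}$, being maps out of the \emph{trivial-action} sphere, picks out exactly $\underline{H}_n^G(Z;M)(G/G)=H_n^G(Z;M)$, the value at the terminal orbit, which is the ordinary (integer-graded) Bredon homology group. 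This matches $\t H_V^G(X;M)$ for $V$ an actual integer. [Alternatively, and perhaps more transparently, invoke the $\cg$-Dold–Thom philosophy: $Z\hot M$ is an $\varOmega$-$\cg$-spectrum-in-the-making whose homotopy Mackey functor is $\underline{\pi}_*(Z\hot M)$, and the structure theorem/reduction already used in Theorem~\ref{loopspace} identifies $\underline{\pi}_n(Z\hot M)\cong\underline{H}_n^G(Z;M)$ by the same three-step reduction to~\cite{ds}; then take $(G/G)$-components.]

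Concretely, then, the steps in order are: (1) recall the definition of $\t H_V^G(X;M)$ as a colimit over indexing spaces of integer-graded Bredon homology of iterated suspensions; (2) use Theorem~\ref{loopspace} to rewrite $[\Phi S^V,X\hot M]_{\cg\ct}$ as the corresponding colimit $\operatorname{colim}_Z[\Phi S^{W_1+Z},(\varSigma^{W_2+Z}X)\hot M]_{\cg\ct}$, via the suspension isomorphism for $\varOmega$-$\cg$-spectra and the adjunction~\er{cgadj}; (3) prove the unstable identification $[\Phi S^n,Z\hot M]_{\cg\ct}\cong \t H_n^G(Z;M)$ for $Z$ a based $G$-CW complex and $n\ge 0$ an integer, naturally in $Z$; (4) check that the two colimit systems in (1) and (2) are identified by the isomorphisms of (3), so the colimits agree. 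For (3) I would again run the Greenlees–May reduction exactly as in the proof of Theorem~\ref{loopspace}: reduce to $M=\calr A$ for a $\WH$-module $A$ via Propositions~\ref{identify} and~\ref{whcg}, where the statement becomes the known nonequivariant/$\WH$-equivariant Dold–Thom theorem for $X^H\hot_\cf A$ from~\cite{ds} and~\cite{n}, and then use exactness (Proposition~\ref{funct} / Proposition~\ref{cofiber}) together with the long exact sequence of the fibration and the five-lemma to pass to a general $M$, since both sides of~(3) are homological functors (the left side because $Z\hot M$ fits in fibration sequences of $\cg$-spaces, as established inside the proof of Theorem~\ref{loopspace}).

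The main obstacle I expect is step~(3), specifically pinning down precisely why maps out of the \emph{trivial-action} sphere $\Phi S^n$ compute the $(G/G)$-value $H_n^G(Z;M)$ rather than some other piece of the Mackey functor $\underline{H}_n^G(Z;M)$ — i.e., correctly matching the $\cg$-space homotopy with Bredon homology and with the fixed-point/coend bookkeeping of Section~2, and making sure the identification is natural enough that it survives passage to the colimit in step~(4) and is compatible with the suspension and structure maps. Everything else — the colimit manipulations in (1)–(2), the exactness/five-lemma induction, the Greenlees–May reduction — is a repetition of machinery already set up, so the weight of the argument sits in getting this comparison clean and functorial.
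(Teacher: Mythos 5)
Your reduction breaks at step (1): the formula $\t H_V^G(X;M)=\operatorname{colim}_{Z}\t H^G_{|W_1+Z|}(S^{W_2+Z}\wedge X;M)$, with the terms taken to be \emph{integer-graded} Bredon homology, is not true, and with it the passage from representation spheres to trivial spheres $S^n$ collapses. Suspension by a representation sphere $S^Z$ shifts the $RO(G)$-degree by $Z$, not by $|Z|$; inside the integer-graded theory there is no suspension homomorphism for a nontrivial $Z$ at all (that such maps exist only after refining to $RO(G)$-grading with Mackey coefficients is exactly the point of Theorem~\ref{loopspace}), and the groups themselves differ: already for $G=\bz/2$, the Burnside Mackey functor $A$ and the sign representation $\sigma$, one has $\t H^{G}_{\sigma}(S^{\sigma};A)\cong A(G/G)$ of rank $2$, while the integer-graded group $\t H^{G}_{1}(S^{\sigma};A)=0$ (it is the kernel of the injective transfer in the reduced cellular complex of $S^\sigma$). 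Consequently your steps (2)--(4) only ever evaluate $[\Phi S^{n},-]_{\cg\ct}$ for trivial spheres, where the identification is indeed easy (via $[\Phi S^n,\cx]_{\cg\ct}\cong\pi_n(\cx(G/G))$ and \er{nieiso} of \cite{n} --- this part of your step (3) is fine), but the theorem's actual content is the value of $[\Phi S^{V},X\hot M]_{\cg\ct}$ for \emph{nontrivial} representation spheres, and no step of your plan ever addresses it: Theorem~\ref{loopspace} lets you trade suspensions between source and target, but it never converts $\Phi S^{W_1+Z}$ into $\Phi S^{|W_1+Z|}$. Contrary to your closing assessment, step (3) is the easy part; the gap is the stabilization framework itself.

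The paper avoids this by an axiomatic argument: setting $\wt{\bf h}^G_V(X,Y;M):=[\Phi S^V,(X/Y)\hot M]_{\cg\ct}$, it verifies the axioms of an $RO(G)$-graded ordinary homology theory with coefficients in $M$ --- homotopy invariance and excision; long exact sequences, coming from the fibration sequences $Y\hot M\to X\hot M\to (X/Y)\hot M$ obtained from Proposition~\ref{cofiber} via the simplicial-realization argument used at the end of the proof of Theorem~\ref{loopspace}; the dimension axiom, from Example~\ref{apoint}; and the $RO(G)$-graded suspension axiom, which is precisely Theorem~\ref{loopspace} --- and then concludes by the characterization of the ordinary $RO(G)$-graded theory with Mackey functor coefficients. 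If you want to repair your proposal, this is essentially what you must do: your integer-graded comparison and Theorem~\ref{loopspace} are two of the required axioms (dimension and suspension), but they have to be fed into the axiomatic uniqueness statement rather than into a colimit of integer-graded Bredon groups.
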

\begin{proof}%[Proof of Theorem~\ref{rogdt}] 
For a $G$-CW pair $(X,Y)$, consider the functors
$$\wt{\bf h}^G_V(X,Y;M)=\pi_V^\cg((X/Y)\hot M)=[\Phi S^V,(X/Y)\hot M]_{\cg\ct}.$$
Now we verify that the $\wt{\bf h}_*^G$ satisfy the axioms for an $RO(G)$-graded theory with coefficients in $M$. 

The $G$-\emph{homotopy axiom} and the \emph{excision axiom} are easy to check (cf. \cite[Cor. 2.7]{ds}). 
%It is ``clear'' that the construction $-\otimes M$ is homotopy invariant, satisfies the excision axiom, and carries 
For a cofibration sequence 
$$Y\os i \to X\os p \to X/Y,$$
one has a \emph{fibration sequence} 
%to an exact sequence 
of topological Mackey functors 
$$Y\hot M\os {i_*} \to X\hot M\os {p_*} \to (X/Y)\hot M.$$
The proof again uses a detour into the simplicial category through geometric realization and the simplicial version of Proposition \ref{cofiber} (cf. the end of the proof of Theorem \ref{loopspace}). Hence one has the associated long exact sequence in the $\wt{\bf h}_*^G$ theory. 

The \emph{dimension axiom} follows from Example~\ref{apoint}. 

The only new $RO(G)$-\emph{graded suspension axiom} now follows from Theorem~\ref{loopspace}.
\end{proof}

%From the properties established above it follows that the functor
%$X\mapsto \pi_V(X\hot M)$ defines an $RO(G)$-graded homology theory
%with $M$ as coefficients.  
%\end{proof}

\section{Equivariant Eilenberg-Mac~Lane spectra}

The categories of $G$-spaces and $\cg$-spaces are related by the fixed point functor $\Phi\colon G\cu\to \cg\cu$ (cf. \er{definephi}).
The coalescence functor $\Psi\colon\cg\cu\to G\cu$ defined by Elmendorf \cite{elmendorf} 
%(see Remark \ref{no-big-diff}) 
%%%[Theorem V.3.2]{may} 
shows that, up to weak equivalence, every $\cg$-space is
the fixed point system of a $G$-space. In this Section we define a variant of Elmendorf's functor for both the unbased and the based situations, and study the relation of $\Psi$ with smash products and internal Hom's. Application of $\Psi$ turns an $\varOmega$-$\cg$-spectrum, for example the $(\varSigma^\infty X)\hot M$ defined in Section~3, to an $\Omega$-$G$-spectrum. In the particular case that $X=S^0$, we get a model
for the equivariant Eilenberg-Mac~Lane spectrum $H\!{M}$. 
%For  background information on $G$-spectra, we refer the reader to the fundamental work~\cite{lms}. 

First we introduce a variant of Elmendorf's construction using the category $G\cf$ of finite $G$-sets instead of the orbit category $\cg$, for the benefit of the existence of finite products (see Proposition \ref{psi&prod} below).

\begin{definition}[{cf. \cite[proof of Theorem~1]{elmendorf}}] 
\label{variant}
Let $J:G\cf\to G\cu$ be the inclusion functor. For $\cx\in \cg\cu$, $\Psi \cx$ is a \emph{$G$-space} defined by %The value of $\Psi$ at a $\cg$-space $\cx$ is defined by  
$$
\Psi \cx:=B(\cx,G\cf,J)=|B_\bullet(\cx,G\cf,J)|
$$ 
where $B_\bullet(-,-,-)$ denotes the triple bar construction:
% \cite[\S V.2]{may}: 
%Here $J:\cg\to G\cu$ is the inclusion map. 
$B_\bullet(\cx,G\cf,J)$ is a simplicial $G$-space whose space of of $n$-simplices is 
$$
\{(x,\underline f,\alpha)|x\in \cx(S_0);\underline f=S_0\os{f_1}\gets S_1\os{f_2}\gets\cdots \os{f_n}\gets S_n;\alpha\in S_n\},
$$ 
where the $f_i$ are $G$-maps, with the usual face (by composition or functoriality of $\cx$ and $J$) and degeneracy (by insertion of identity) maps. The $G$-action on $B_n(\cx,\cg,J)$ is induced from the one on the images of $J$. 
%Define $\Psi(\cx)=B(\cx,G\cf,J)$ as the triple bar construction such that the $G$-action is on the targets of $J$. More 
\end{definition}

%We still have the following desired property for our variant.
The following proposition shows that a $\cg$-space $\cx$ is, up to weak equivalence, the fixed point system of the $G$-space $\Psi\cx$. 

\begin{proposition}[{cf. \cite[Theorem~1]{elmendorf}}]
\label{still-good}
There is a natural transformation $\varepsilon\colon\Phi\Psi\to \id$ such that
for each $\cx\in \cg\cu$ the map $\varepsilon_\cx\colon\Phi\Psi\cx\to \cx$ is a weak equivalence.
In particular $\Psi$ preserves weak equivalences.
\end{proposition}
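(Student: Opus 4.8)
The plan is to mirror Elmendorf's original argument for the bar-construction coalescence functor, adapted to the ambient category $G\cf$ in place of the orbit category $\cg$. First I would define the natural transformation $\varepsilon_\cx\colon\Phi\Psi\cx\to\cx$. On an orbit $G/H$, the space $(\Phi\Psi\cx)(G/H)=\M_{G\cu}(G/H,\Psi\cx)=(\Psi\cx)^H$ is the realization of the $H$-fixed simplicial space $B_\bullet(\cx,G\cf,J)^H$. The augmentation sends an $n$-simplex $(x,\underline f,\alpha)$ with $x\in\cx(S_0)$, $\underline f=(S_0\os{f_1}\gets\cdots\os{f_n}\gets S_n)$ and $\alpha\in S_n$ to $\cx(\alpha)(x)\in\cx(G/H)$, where $\alpha\colon G/H\to S_n$ is viewed as a map in $G\cf$ and then $\cx$ of the composite $S_0\gets\cdots\gets S_n\os{\alpha}\gets G/H$ is applied to $x$; one checks this is compatible with faces and degeneracies, hence descends to a map on realizations, and is natural in $\cx$ and in $G/H$ (so a map of $\cg$-spaces, using Remark~\ref{GF&CG} to extend from orbits to all of $G\cf$).

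Next I would prove that $\varepsilon_\cx$ is a weak equivalence componentwise, i.e.\ that $(\Psi\cx)^H\to\cx(G/H)$ is a weak homotopy equivalence for every $H<G$. The key point is the standard "extra degeneracy" argument: after taking $H$-fixed points, the simplicial space $B_\bullet(\cx,G\cf,J)^H$ admits a simplicial contraction onto the constant simplicial space $\cx(G/H)$. Concretely, $B_n(\cx,G\cf,J)^H$ consists of those $n$-simplices $(x,\underline f,\alpha)$ with $\alpha\in S_n^H$, which is the same as a $G$-map $G/H\to S_n$; one builds an extra degeneracy $s_{-1}$ by prepending $G/H$ to the chain $\underline f$ along this map $\alpha$ and taking $x$ unchanged in $\cx(S_0)$ (now viewing $G/H$ as the new $S_0$ via $\cx$ applied to $G/H\os{\alpha}\to S_n\to\cdots\to S_0$... more precisely one inserts $G/H\os{\id}\gets G/H$ at the bottom). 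This makes $B_\bullet(\cx,G\cf,J)^H$ into a simplicial space with extra degeneracy, so its realization deformation retracts onto the $0$-th level's image, and a short check identifies the resulting retraction with $\varepsilon_\cx$ on $G/H$; in particular $\varepsilon_\cx$ is a homotopy equivalence on each component, hence a $\cg$-weak equivalence. The "in particular" clause then follows: if $f\colon\cx\to\cy$ is a weak equivalence of $\cg$-spaces, then in the commutative square relating $\varepsilon_\cx$, $\varepsilon_\cy$, $\Phi\Psi f$ and $f$, three of the four maps are weak equivalences, so $\Phi\Psi f$ is; and since weak equivalences of $G$-spaces are detected on fixed points (equivalently, $\Phi$ reflects weak equivalences by definition of the model structure on $\cg\cu$), $\Psi f$ is a weak equivalence.

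The main obstacle I anticipate is the bookkeeping needed to verify that the extra degeneracy is genuinely well-defined and simplicial \emph{after passing to $H$-fixed points} — the subtlety is that a generic $n$-simplex of $B_\bullet(\cx,G\cf,J)$ is not $H$-fixed, and the contraction only exists on the fixed subspace, where the distinguished element $\alpha\in S_n$ becomes an actual $G$-map $G/H\to S_n$ that can be used to augment the chain. One must also confirm that using $G\cf$ rather than $\cg$ causes no trouble here: since every finite $G$-set is a disjoint union of orbits and $\cx$ turns disjoint unions into products, the relevant hom-sets and fixed-point computations reduce to the orbit case, exactly as in Elmendorf's proof. I would cite \cite[proof of Theorem~1]{elmendorf} for the details of the extra-degeneracy contraction, noting only the modifications needed for the $G\cf$-variant and the based-versus-unbased distinction (which is immaterial for this unbased statement).
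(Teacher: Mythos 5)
Your proposal is correct and follows essentially the same route as the paper: you define $\varepsilon_\cx$ on an orbit $G/H$ by the augmentation of the bar construction $B_\bullet(\cx,G\cf,\H_{G\cf}(G/H,-))$ (pulling back $x$ along the composite of the chain and $\alpha$), argue it is a deformation retract via the standard extra degeneracy --- which is exactly what the paper invokes by citing May --- and then deduce that $\Psi$ preserves weak equivalences from the naturality square for $\varepsilon$. The only minor quibble is that your description of the extra degeneracy wobbles between ``prepending'' and ``appending''; the correct formula appends $S_{n+1}:=G/H$ at the $S_n$ end of the chain via $f_{n+1}:=\alpha$ and sets the new distinguished element to $\id_{G/H}$, but since you defer the bookkeeping to Elmendorf's proof this does not affect the soundness of the argument.
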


%Again this boils down to $B(\cx,G\cf,\H_{{\rm G\cf}}(G/H,-))\to \cx(G/H)$ is a deformation retract. 

\begin{proof} For an orbit $G/H$, the natural transformation $\ve$ has value
\begin{align}
\label{vexg/h}
\varepsilon_\cx(G/H)\colon & \Phi\Psi\cx(G/H)=\M_{G\cu}(G/H,B(\cx,G\cf,J))\\
= & B(\cx,G\cf,\H_{G\cf}(G/H,-))\to \cx(G/H),\nonumber
\end{align}
which is defined by pullback for each simplex. Here the second equality follows from the fact that the $G$-action on $B(\cx,G\cf,J)$ is induced from the one on the images of $J$, and that $G/H$ is an orbit. 
It is standard \cite[\S V.2]{may} that $\varepsilon_\cx(G/H)$ is a strong deformation retract.

For the second statement, assume that $f:\cx\to \cy$ is a weak equivalence of $\cg$-spaces. We now prove that $\Psi f:\Psi\cx\to \Psi\cy$ is a weak equivalence of $G$-spaces. By definition, we need to show that $\Phi\Psi f:\Phi\Psi\cx\to \Phi\Psi\cy$ is a weak equivalence of $\cg$-spaces. This follows from the commutativity of the following diagram, 
$$
\xymatrix{
\Phi\Psi\cx\ar[r]^{\Phi\Psi f}\ar[d]^{\ve_\cx} & \Phi\Psi\cy\ar[d]^{\ve_\cy}\\
\cx\ar[r]^f & \cy,
}$$
by the naturality of $\ve$, and the fact that $\ve_\cx$, $\ve_\cy$ and $f$ are weak equivalences. 
\end{proof}

\begin{remark} 
\label{no-big-diff}
%Denote Elmendorf's original construction in \cite{elmendorf} by $\Psi_E$ which is defined by $\Psi_E(\cx)=B(\cx,\cg,J)$ in view of Remark \ref{GF&CG}. Then by the natural inclusion $\cg\to G\cf$, one has a natural transformation $i:\Psi_E\to \Psi$. On the other hand, picking just the orbit for each $\alpha\in S_n$ in Definition~\ref{variant} and noting that the image of an orbit under an equivariant map is an orbit, we get another natural transformation $t:\Psi\to \Psi_E$. It is clear that $t\circ i=\id$. One can easily check that for a $\cg$-space $\cx$, $(i\circ t)_{\cx}$ is naturally homotopy equivalent to $\id_{\Psi(\cx)}$ by constructing a natural homotopy on the simplicial space level using the inclusions of the orbits at appropriate places as illustrated in the following diagram
%$$
%\xymatrix{
%S_0 & S_1\ar[l]_{f_1} & \cdots\ar[l]_{f_2} & S_{n-1}\ar[l]_{f_{n-1}} &  S_n\ar[l]_{f_n}\\
%G/H_0\ar[u]^{i_0} & G/H_1\ar[u]^{i_1}\ar[l]_{f_1'} & \cdots\ar[l]_{f_2'} & G/H_{n-1}\ar[l]_{f_{n-1}'}\ar[u]^{i_{n-1}} & G/H_n\ar[l]_{f_n'}\ar[u]^{i_n}.
%}
%$$
%{\tt Question: How?}
It can be checked that our variant in Definition \ref{variant} is the same as Elmendorf's construction in \cite{elmendorf} up to homotopy. 
\end{remark}

Our variant enables us to show the following relation between $\Psi$ and products. 
%There seems to be an interesting additional property coming from that fact that $G\cf$ has products.

\begin{proposition} 
\label{psi&prod}
For $\cx,\cy\in \cg\cu$, there is a homotopy equivalence of $G$-spaces 
\begin{equation}
\label{def-varpi}
\varpi\colon\Psi\cx\times\Psi\cy\to\Psi(\cx\times\cy),
\end{equation}
with a homotopy inverse 
$$
\Delta: \Psi (\cx\times \cy)\to \Psi\cx\times\Psi\cy.
$$
defined in \eqref{Delta} below.
%There is a based version of this construction 
%$$
%\varpi\colon\Psi\cx\wedge\Psi\cy\to\Psi(\cx\wedge\cy).
%$$
\end{proposition}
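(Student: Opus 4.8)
The plan is to construct the map $\varpi$ and its candidate homotopy inverse $\Delta$ explicitly at the level of the simplicial bar constructions, and then check the two composites are homotopic to the respective identities using the contracting-homotopy philosophy already exploited in the proof of Proposition~\ref{still-good}. First I would define $\varpi$: a typical $n$-simplex of $\Psi\cx\times\Psi\cy$ is a pair of strings $(x,\underline f,\alpha)\in B_n(\cx,G\cf,J)$ and $(y,\underline g,\gamma)\in B_n(\cy,G\cf,J)$, indexed by two \emph{a priori} unrelated chains $S_0\leftarrow\cdots\leftarrow S_n$ and $S_0'\leftarrow\cdots\leftarrow S_n'$. The obvious thing is to send this to the $n$-simplex of $B_n(\cx\times\cy,G\cf,J)$ built on the product chain $S_0\times S_0'\leftarrow\cdots\leftarrow S_n\times S_n'$ (with $f_i\times g_i$ as the maps, which are again $G$-maps of finite $G$-sets because $G\cf$ has finite products), with the $\cx\times\cy$-coordinate $(\cx(\pi)(x),\cy(\pi')(y))\in(\cx\times\cy)(S_0\times S_0')$ obtained by the contravariant functoriality along the two projections, and the point coordinate $(\alpha,\gamma)\in S_n\times S_n'$. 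One checks this is simplicial (compatible with faces and degeneracies), $G$-equivariant, and natural, so on geometric realizations it gives $\varpi\colon\Psi\cx\times\Psi\cy\to\Psi(\cx\times\cy)$; here one uses that realization commutes with finite products so the source really is $|B_\bullet(\cx,G\cf,J)\times B_\bullet(\cy,G\cf,J)|$.

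Next I would define $\Delta$ in the reverse direction. An $n$-simplex of $B_n(\cx\times\cy,G\cf,J)$ is $((x,y),\underline f,\alpha)$ with a single chain $S_0\leftarrow\cdots\leftarrow S_n$ and $\alpha\in S_n$, and the tautological assignment is to send it to the pair $\bigl((x,\underline f,\alpha),(y,\underline f,\alpha)\bigr)$, i.e. use the \emph{same} chain and the same point $\alpha$ for both factors. This is visibly simplicial, $G$-equivariant and natural, so it realizes to $\Delta\colon\Psi(\cx\times\cy)\to\Psi\cx\times\Psi\cy$. The composite $\Delta\circ\varpi$ is then the simplicial map on $B_\bullet(\cx,G\cf,J)\times B_\bullet(\cy,G\cf,J)$ that replaces the pair of chains $(\underline f,\underline g)$ by the diagonal pair $(\underline{f\times g},\underline{f\times g})$ and the coordinates accordingly. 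The composite $\varpi\circ\Delta$ replaces the chain $\underline f$ by $\underline{f\times f}$ and $(x,y)$ by $(\cx(\mathrm{pr}_1)(x),\cy(\mathrm{pr}_2)(y))$, where $\mathrm{pr}_i\colon S_j\times S_j\to S_j$; note that although $\cx$ need not send the diagonal $S_j\to S_j\times S_j$ composed with a projection to anything identifying with the identity on the nose, it does up to the simplicial structure.

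The homotopies are the heart of the argument. The clean way is to exhibit, in each case, an extra degeneracy / simplicial homotopy. For $\varpi\circ\Delta\simeq\id$ on $\Psi(\cx\times\cy)$: the diagonal $G$-maps $S_j\to S_j\times S_j$ and the projections assemble into a natural transformation of simplicial $G$-spaces relating $B_\bullet(\cx\times\cy,G\cf,J)$ to itself that is a simplicial homotopy from $\varpi\circ\Delta$ to the identity — concretely one can insert the chain $S_0\times S_0\leftarrow S_0\leftarrow\cdots$ interpolating between the ``doubled'' chain and the original, which is exactly the kind of one-step-longer-chain homotopy that produces contractions of bar constructions. Equivalently, and probably cleanest, one observes that both $\varpi\circ\Delta$ and $\id$, after applying $\Phi$, commute (by naturality of $\varepsilon$) with the strong deformation retractions $\varepsilon_{\cx\times\cy}\colon\Phi\Psi(\cx\times\cy)\to\cx\times\cy$ established in Proposition~\ref{still-good}, and $\varepsilon_{\cx\times\cy}\circ\Phi(\varpi\circ\Delta)=\varepsilon_{\cx\times\cy}$; since $\varepsilon_{\cx\times\cy}$ is a weak equivalence and $\Phi$ detects weak equivalences component-wise, $\varpi\circ\Delta$ is a weak equivalence, and a small extra argument (or a direct simplicial homotopy) upgrades this to an honest homotopy. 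Similarly for $\Delta\circ\varpi\simeq\id$ one uses the product of the two retractions $\varepsilon_\cx\times\varepsilon_\cy$ and the compatibility of $\varpi$ with the evaluation maps. I expect the main obstacle to be precisely this last point: producing \emph{honest} homotopy equivalences (as the proposition claims) rather than mere weak equivalences, which forces one either to write down the interpolating simplicial homotopies by hand — bookkeeping the faces and degeneracies of the doubled chains — or to invoke that all the spaces in sight are of the homotopy type of CW complexes so that Whitehead's theorem promotes the weak equivalences to homotopy equivalences; I would take the simplicial-homotopy route to keep the statement self-contained.
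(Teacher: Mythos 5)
Your construction of $\varpi$ (product chains $f_i\times g_i$ with coordinates pulled back along the projections) and of $\Delta$ (reusing the single chain and point for both factors, i.e. $\Psi(pr_1)\times\Psi(pr_2)$) is exactly what the paper does, and your plan to verify the two composites via simplicial homotopies built from diagonals and projections on the bar construction is precisely the route the paper sketches (it likewise leaves those homotopies as a check rather than writing them out). So your proposal is correct and takes essentially the same approach; your alternative $\Phi$/$\varepsilon$/Whitehead detour is unnecessary, as you yourself note.
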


\begin{proof} Define $\varpi$ as the geometric realization of 
\begin{gather*}
B_\bullet(\cx,G\cf,J)\times B_\bullet(\cy,G\cf,J)\to B_\bullet(\cx\times\cy, G\cf, J):\\
\left((\cx(S_0)\ni x,f_1,\cdots,f_n, \alpha\in S_n),
(\cy(T_0)\ni y, g_1,\cdots,g_n,\beta\in T_n)\right)
\mapsto \\
((\cx\times\cy)(S_0\times T_0)\ni(\cx(p_1)(x),\cy(p_2)(y)),f_1 \times g_1,\cdots,f_n\times g_n, 
(\alpha,\beta)\in S_n \times T_n),
\end{gather*}
where $p_1\colon S_0\times T_0\to S_0$ and $p_2\colon S_0\times T_0\to T_0$ denote the projections.

Define 
$$
\Delta=\Psi(pr_1)\times \Psi(pr_2): \Psi(\cx\times\cy)\to \Psi \cx\times \Psi \cy
$$
by the functoriality of $\Psi$ for the obvious projections. More concretely, $\Delta$ is the geometric realization of 
\begin{gather}
\label{Delta}
B_\bullet(\cx\times\cy, G\cf, J)\to B_\bullet(\cx,G\cf,J)\times B_\bullet(\cy,G\cf,J):\\\notag
((x,y)\in (\cx\times\cy)(S_0),f_1,\cdots,f_n,\alpha)\mapsto\\\notag
((x\in \cx(S_0),f_1,\cdots,f_n,\alpha),(y\in \cy(S_0),f_1,\cdots,f_n,\alpha)).
\end{gather}

It can be checked that $\varpi$ and $\Delta$ are homotopy inverses of each other by constructing homotopies on the simplicial space level using projections and diagonals at appropriate places. 
%The descendent of $\varpi$ to the based version is clear.
\end{proof}

%\begin{remark} Using the natural transformations $i$ and $t$ in Remark \ref{no-big-diff}, one can get a similar result for Elmendorf's $\Psi_E$ and the product. 
%\end{remark}
%\noindent{\tt I'm not sure what we mean here. Which product?}\smallskip

%\begin{notation}
%We denote by $[s,\underline{f},u]$ the element of  $\overline{B}_\bullet(\cx,G\cf, J)$
%represented by $(s,\underline{f},u)\in B_n(\cx,G\cf,J)$.
%\end{notation}

We now define a version of the coalescence functor for based $\cg$-spaces, still denoted $\Psi\colon\cg\ct\to G\ct$.
\begin{definition} For $\cx\in \cg\ct$, the \emph{based $G$-space} $\Psi \cx$ is defined to be the geometric realization of
a based $G$-simplicial space, as follows:
\[
\Psi \cx=\overline{B}(\cx,G\cf, J):=|\overline{B}_\bullet(\cx,G\cf, J)|,
\]
where $\overline{B}_\bullet(\cx,G\cf, J)$ is the based simplicial $G$-space defined by
\[
\overline{B}_n(\cx,G\cf, J)=B_n(\cx,G\cf, J)/B_n(*,G\cf, J),
\]
where $*$ denotes the base point of $\cx$. 
%Here in the geometric realization we use $\wedge (\Delta_n)_+$. 
%$B$, unlike the geometric realization, doesn't commute with colimits, like quotients. Therefore we need to do this. 
\end{definition}

%\begin{remark} It can be seen that 
%$$
%\ol{B}_n(\cx,G\cf,J)=\bigvee_{\ul{f}} \cx(S_0)\wedge (S_n)_+,
%$$
%where $\ul{f}$ runs over all finite $G$-sets $S_i$ and all $G$-maps $f_i$
%$$
%\underline f=S_0\os{f_1}\gets S_1\os{f_2}\gets\cdots \os{f_n}\gets S_n.
%$$
%\end{remark}

%%%%%%%%%%%% relation with Pedro's based version %%%%%%%%%%%%%%%%%
%\begin{remark}(??? Get rid later) Since geometric realization, being a left adjoint - {\bf this may not be true anymore}, commutes with colimits, we have $\Psi_{\text{based}}(\cx)=\Psi_{\text{unbased}}(\cx)/\Psi(*)$.  It is easy to see that $\Psi(*)=B(*,G\cf,J)$ is contractible since it is homotopy equivalent to $*$ (cf. Proposition \ref{still-good}). Therefore the above definition is equivalent to making $B_\bullet(\cx,G\cf,J)$ to be a \emph{based} simplicial space by specifying the base point of $B_n(\cx,G\cf,J)$ to be $(*,\underline \id,*)$ with 
%$$*\in \cx(G/G),\ \underline \id=G/G\os\id\gets\cdots \os\id\gets G/G,\ *\in G/G.$$
%\end{remark}
%%%%%%%%%%%%%%%%%%%%%%%%%%%%%%%%%%%%%%%%%%%%%%%%%%%%%%%%%%%%%%%%%

%\noindent {\tt Again, I'm not sure what we mean here. In what sense is it equivalent?
%As for the question about commuting the geometric realization functor with
%colimits, I'll find a reference this week}

%\begin{remark}(??? Get rid later) For the based version, one still has 
%$$
%\varepsilon: \Phi\Psi\cx\to \cx
%$$
%is a weak equivalence of pointed $\cg$-spaces.
%\end{remark} 

We have the following relations of the functor $\Psi$ with smash products and internal Hom's.

\begin{lemma}
\label{descend}
For $\cx,\cy\in \cg\ct$, there is a based version of the construction in Proposition \ref{psi&prod}
%, which we also denote by $\varpi$.
$$
\varpi\colon\Psi\cx\wedge\Psi\cy\to\Psi(\cx\wedge\cy),
$$
which is a weak equivalence of $G$-spaces. 
\end{lemma}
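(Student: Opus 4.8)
The plan is to build $\varpi$ from the unbased map of Proposition~\ref{psi&prod} and then verify it descends to the based (smash) quotient and is still a weak equivalence. First I would observe that the unbased simplicial map
$B_\bullet(\cx,G\cf,J)\times B_\bullet(\cy,G\cf,J)\to B_\bullet(\cx\times\cy,G\cf,J)$
constructed in the proof of Proposition~\ref{psi&prod} carries the ``fat wedge'' subspace into $B_\bullet$ of the base point. Concretely, if either $x$ lies in $*(S_0)$ or $y$ lies in $*(T_0)$, then the component $(\cx(p_1)(x),\cy(p_2)(y))$ of the image lies in $(\cx\times\cy)$ evaluated at the base point, hence in $\ol B_\bullet(*,G\cf,J)$; here one uses that the base point of $\cx\times\cy$ is the pair of base points and that $\cx,\cy$ applied to maps preserve base points. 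Therefore, after passing to the quotients $\ol B_n(\cx,G\cf,J)=B_n(\cx,G\cf,J)/B_n(*,G\cf,J)$ and smashing, the map induces
$\ol B_\bullet(\cx,G\cf,J)\wedge \ol B_\bullet(\cy,G\cf,J)\to \ol B_\bullet(\cx\wedge\cy,G\cf,J)$,
and geometric realization (which commutes with smash products of simplicial sets and with quotients) yields the desired $\varpi\colon\Psi\cx\wedge\Psi\cy\to\Psi(\cx\wedge\cy)$.

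Next I would argue the weak equivalence claim componentwise via the fixed-point functor $\Phi$, since weak equivalences of $G$-spaces are detected on fixed points. For a subgroup $K<G$, the $K$-fixed points of $\Psi\cx\wedge\Psi\cy$ is $(\Psi\cx)^K\wedge(\Psi\cy)^K$ by \er{phi-prod}, and by Proposition~\ref{still-good} the counit $\ve$ gives weak equivalences $(\Psi\cx)^K\simeq \Phi\Psi\cx(G/K)\simeq \cx(G/K)$, and likewise for $\cy$. On the other side, $(\Psi(\cx\wedge\cy))^K\simeq (\cx\wedge\cy)(G/K)=\cx(G/K)\wedge\cy(G/K)$. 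The point is then that under these identifications $\varpi$ corresponds, up to homotopy, to the identity of $\cx(G/K)\wedge\cy(G/K)$ — equivalently, that the square comparing $\varpi$ to the product map of counits commutes up to homotopy. This is the naturality/compatibility statement: $\ve$ is built by pullback on each simplex, and the simplicial formula for $\varpi$ uses the product maps $f_i\times g_i$ together with the projections $p_1,p_2$, so chasing a fixed simplex through both routes gives the same element.

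The main obstacle I expect is precisely this last compatibility: showing that, after taking $K$-fixed points, $\varpi$ agrees up to homotopy with the smash of the two counit equivalences. One cannot expect strict equality on the nose, because $\varpi$ is defined using the external product of bar constructions while $\Delta=\Psi(pr_1)\times\Psi(pr_2)$ goes the other way; the cleanest route is to use the homotopy inverse $\Delta$ from Proposition~\ref{psi&prod} (which also descends to the based setting by the same base-point-preservation check, using that the projections $pr_1,pr_2$ are based maps of based $\cg$-spaces) and verify that $\Phi\Delta(G/K)$ is homotopic to the product of the inverses of the counit equivalences. Since $\Delta$ is induced by genuine $\cg$-maps (the projections), its behaviour under $\Phi$ and under $\ve$ is governed by strict naturality of $\ve$, which makes the relevant square commute on the nose; combining this with the fact that $\varpi$ and $\Delta$ are mutually inverse up to homotopy (Proposition~\ref{psi&prod}, carried over to the based case) forces $\varpi$ to be a weak equivalence. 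The routine simplicial homotopies needed to port $\varpi\simeq\id$ and $\Delta\simeq$ (inverse) to the based quotients are the same ones indicated in the proof of Proposition~\ref{psi&prod}, now observed to respect the base-point subspaces, and I would only sketch them.
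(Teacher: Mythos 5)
Your construction and descent of the based map $\varpi$ (checking that the fat wedge of bar constructions lands in $B_\bullet(*,G\cf,J)$ and passing to the quotients $\ol B_\blt$) is exactly the paper's first step and is fine. The problem is in your weak-equivalence argument, which hinges on a based version of $\Delta=\Psi(pr_1)\times\Psi(pr_2)$ that does not exist. There are no based maps $\cx\wedge\cy\to\cx$ induced by the projections (a projection is not constant on the wedge), and the levelwise formula one would write, $\bigl((x\wedge y)\in(\cx\wedge\cy)(S_0),\underline f,\alpha\bigr)\mapsto(x,\underline f,\alpha)\wedge(y,\underline f,\alpha)$, is not well defined on $\ol B_\blt(\cx\wedge\cy,G\cf,J)$: take $S_0=G/K_1\sqcup G/K_2$ and a point of $(\cx\wedge\cy)(S_0)$ whose $\cy$-coordinate is the base point on the first orbit but not on the second; the $\cx$-coordinate on the first orbit is then arbitrary, yet the proposed image $(x,\underline f,\alpha)\wedge(y,\underline f,\alpha)$ is not the base point (since $y\notin *(S_0)$), so different representatives give different images. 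This failure occurs precisely on the non-orbit finite $G$-sets that the variant of Definition \ref{variant} (indexing over $G\cf$ rather than $\cg$) was introduced to include, so the based $\Delta$, the transported homotopies $\varpi\Delta\simeq\id$ and $\Delta\varpi\simeq\id$, and the strict-naturality argument for $\Phi\Delta$ all collapse.

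Moreover, the worry that motivated this detour is unfounded: the comparison triangle with the counits commutes strictly, not merely up to homotopy. Since $\ve_\cx(G/H)$ is defined simplexwise by pullback, and $\varpi$ sends a pair of simplices to the product simplex, the identity $p_1\circ(f_1\times g_1)\circ\cdots\circ(f_n\times g_n)\circ(\alpha,\beta)=f_1\circ\cdots\circ f_n\circ\alpha$ (and its $p_2$ analogue) gives $\ve_{\cx\wedge\cy}\circ\Phi\varpi=\ve_\cx\wedge\ve_\cy$ on the nose; this is the paper's diagram \eqref{eq-varomega-varepsilon}, and two-out-of-three then finishes the proof. Note also that to conclude that $\ve_\cx\wedge\ve_\cy$ (equivalently, your smash of the two fixed-point equivalences $(\Psi\cx)^K\wedge(\Psi\cy)^K\to\cx(G/K)\wedge\cy(G/K)$) is a weak equivalence, you need more than the bare fact that each factor is a weak equivalence: the paper uses that each $\ve_\cx(G/H)$ is a based deformation retract, a point your argument elides.
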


\begin{proof} Consider the natural composition
$$
\Psi\cx \times \Psi\cy \os\varpi\to \Psi(\cx\times \cy)\to \Psi(\cx\wedge \cy),
$$
which is the geometric realization of 
$$
B_\bullet(\cx,G\cf,J)\times B_\bullet(\cy,G\cf,J)\to B_\bullet(\cx\times\cy, G\cf,J)\to B_\bullet(\cx\wedge \cy, G\cf,J).
$$

It is clear that 
$$
B_\bullet(\cx,G\cf,J)\times B_\bullet(*,G\cf,J)\to B_\bullet(\cx\times *,G\cf,J)\to B_\bullet(*,G\cf,J)
$$ 
under the above composition, and similarly for $B_\bullet(*,G\cf,J)\times B_\bullet(\cy,G\cf,J)$. 
%Therefore we have the descendence. 
%Define  $\varpi\colon\Psi\cx\wedge\Psi\cy\to\Psi(\cx\wedge\cy)$ by
%\[
%\varpi([s,\underline{f},u]\wedge[t,\underline{g},v]):=[s\wedge t,\underline{f}\times\underline{g},(u,v)]\in\overline{B}_\bullet(\cx\wedge\cy,G\cf,J).
%\]

%It is easy to see that there is a map 
Then we define the based $\varpi$ to be the following composition
\begin{align*}
 & \Psi \cx\wedge \Psi \cy= |\ol B_\blt (\cx,G\cf,J)|\wedge |\ol B_\blt(\cy,G\cf,J)|=|\ol B_\blt (\cx,G\cf,J)\wedge \ol B_\blt (\cy,G\cf,J)| \\
=& |(B_\blt (\cx,G\cf,J)\times B_\blt (\cy,G\cf,J))/((B_\blt (\cx,G\cf,J)\times B_\blt (*,G\cf,J))\cup (B_\blt (*,G\cf,J)\times B_\blt (\cx,G\cf,J)))| \\
\to & |B_\blt(\cx\wedge \cy,G\cf,J)/B_\blt(*,G\cf,J)|=|\ol B_\blt(\cx\wedge \cy,G\cf,J)|=\Psi(\cx\wedge \cy).
\end{align*}

By definition, to show that the based $\varpi$ is a weak equivalence, we need to show that 
$$
\Phi\varpi:\Phi\Psi\cx \wedge \Phi\Psi\cy {\os{\er{phi-prod}}=} \Phi(\Psi\cx\wedge \Psi\cy)\to \Phi\Psi(\cx \wedge \cy)
$$
is a weak equivalence of $\cg$-spaces. This follows from the commutativity of the following diagram
\begin{equation}
\label{eq-varomega-varepsilon}
\xymatrix{
\Phi\Psi\cx \wedge \Phi\Psi\cy\ar[rd]^{\varepsilon_\cx\wedge\varepsilon_\cx}\ar[rr]^{\Phi\varpi} & &\Phi\Psi(\cx \wedge \cy)\ar[ld]^{\varepsilon_{\cx\wedge\cy}}\\
%\ar[r]^{\os{\er{phi-prod}}=} & \Phi(\Psi\cx\wedge \Psi\cy)\ar[d]^{\Phi(\varpi)}\\
&\cx\wedge \cy, & 
%\Phi\Psi(\cx \wedge \cy)\ar[l]^{\varepsilon}
}
\end{equation}
and the fact that $\varepsilon_{\cx\wedge\cy}$ and $\ve_\cx\wedge\ve_\cy$ are weak equivalences. Recall that $\ve_\cx(G/H)$ \er{vexg/h} is a based deformation retract, so is $\ve_\cy(G/H)$. Therefore $\ve_\cx\wedge \varepsilon_\cy$ is a weak equivalence. 
\end{proof}

%\begin{notation}
%Given $x\in\Psi\cx$ and $y\in\Psi\cy$ denote $\varpi(x\wedge y)=x\owedge y$.
%\end{notation}

%After talking to Paulo, it seems true that 
%$$
%|K_\blt|\wedge |L_\blt|\to |K_\blt\wedge L_\blt|.
%$$
%First of all, there should be such a descence. It may be shown to be a homeomorphism. 

\begin{proposition}\label{psi&internal-hom} 
For $\cx$ a based $\cg$-CW complex and $\cy$ a based $\cg$-space, one has a weak equivalence of $G$-spaces
\begin{equation}
\label{hom&psi}
\zeta:\Psi\chom(\cx,\cy)\os\sim\to \M_\ct(\Psi\cx,\Psi\cy).
\end{equation}
\end{proposition}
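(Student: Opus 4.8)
The plan is to define $\zeta$ by adjunction and then to verify it is a weak equivalence after applying $\Phi$, reducing everything to the weak equivalences $\ve$ of Proposition~\ref{still-good} together with homotopy invariance of $\chom$. To construct $\zeta$, note that by the $(\wedge,\M_\ct)$-adjunction in $G\ct$, giving a $G$-map $\zeta\colon\Psi\chom(\cx,\cy)\to\M_\ct(\Psi\cx,\Psi\cy)$ is the same as giving a $G$-map $\Psi\chom(\cx,\cy)\wedge\Psi\cx\to\Psi\cy$, and I would take this to be the composite
$$
\Psi\chom(\cx,\cy)\wedge\Psi\cx\os\varpi\to\Psi\bigl(\chom(\cx,\cy)\wedge\cx\bigr)\os{\Psi\mu}\to\Psi\cy,
$$
where $\varpi$ is the based weak equivalence of Lemma~\ref{descend} and $\mu\colon\chom(\cx,\cy)\wedge\cx\to\cy$ is the counit \er{cg-counit} of the adjunction \er{cgadj} on $\cg\ct$. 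Since weak equivalences in $G\ct$ are detected on fixed points and the model structure on $\cg\ct$ is component-wise, $\zeta$ is a weak equivalence of $G$-spaces iff $\Phi\zeta$ is a $\cg$-weak equivalence; and because $\Phi$ is strong monoidal \er{phi-prod} and carries $\M_\ct$ to $\chom$ \er{internalhom1}, it intertwines the two closed-monoidal adjunctions, so $\Phi\zeta\colon\Phi\Psi\chom(\cx,\cy)\to\chom(\Phi\Psi\cx,\Phi\Psi\cy)$ is the $(\wedge,\chom)$-adjoint of $\Phi\Psi\mu\circ\Phi\varpi$.

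The heart of the argument is then a single identity. Combining naturality of $\ve\colon\Phi\Psi\to\id$ applied to $\mu$ with the triangle \er{eq-varomega-varepsilon} from the proof of Lemma~\ref{descend} (which says $\ve_{\cx\wedge\cy}\circ\Phi\varpi=\ve_\cx\wedge\ve_\cy$), one obtains
$$
\ve_\cy\circ\Phi\Psi\mu\circ\Phi\varpi=\mu\circ(\ve_{\chom(\cx,\cy)}\wedge\ve_\cx).
$$
Passing to $(\wedge,\chom)$-adjoints, and using that the adjoint of the counit $\mu$ is the identity, this is precisely the statement that the square
$$
\xymatrix{
\Phi\Psi\chom(\cx,\cy)\ar[r]^-{\Phi\zeta}\ar[d]_{\ve_{\chom(\cx,\cy)}} & \chom(\Phi\Psi\cx,\Phi\Psi\cy)\ar[d]^{\chom(\Phi\Psi\cx,\,\ve_\cy)}\\
\chom(\cx,\cy)\ar[r]^-{\chom(\ve_\cx,\,\cy)} & \chom(\Phi\Psi\cx,\cy)
}
$$
commutes.

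To conclude, I would show the other three maps in this square are $\cg$-weak equivalences. The left vertical map is one by Proposition~\ref{still-good}. For the remaining two I first observe that $\Psi\cx=|B_\bullet(\cx,G\cf,J)|$ is a $G$-CW complex when $\cx$ is a $\cg$-CW complex: each $B_n(\cx,G\cf,J)$ is of the form (CW complex)$\times S_n$ with $S_n$ a finite $G$-set, and the simplicial $G$-space is proper (degeneracies are inclusions of subcomplexes); hence $\Phi\Psi\cx$ is a $\cg$-CW complex, in particular cofibrant in $\cg\ct$. Since $\ve_\cx\colon\Phi\Psi\cx\to\cx$ and $\ve_\cy\colon\Phi\Psi\cy\to\cy$ are weak equivalences, $\cx$ is cofibrant, and every object of $\cg\ct$ is fibrant, both $\chom(-,\cy)$ (homotopy invariant on cofibrant objects) and $\chom(\Phi\Psi\cx,-)$ (homotopy invariant because $\Phi\Psi\cx$ is cofibrant) send these to weak equivalences. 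Thus $\chom(\ve_\cx,\cy)$ and $\chom(\Phi\Psi\cx,\ve_\cy)$ are weak equivalences, and the commuting square forces $\Phi\zeta$, hence $\zeta$, to be a weak equivalence.

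The main obstacle I anticipate is the homotopical bookkeeping around $\chom$: one must be sure that the two partial functors $\chom(-,\cy)$ and $\chom(\cx,-)$ preserve the relevant weak equivalences, and the cleanest way to secure this is precisely the observation that $\Psi$ lands in $G$-CW complexes, so that $\Phi\Psi\cx$ is cofibrant. A secondary point to be careful about is verifying that $\Phi$, being strong monoidal and preserving internal Hom, genuinely carries the adjoint $\zeta$ in $G\ct$ to the adjoint $\Phi\zeta$ in $\cg\ct$, which is what lets the identity above be derived by taking adjoints.
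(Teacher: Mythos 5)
Your proof is correct and is essentially the paper's: you define $\zeta$ as the adjoint of $\Psi\mu\circ\varpi$ and verify that $\Phi\zeta$ is a $\cg$-weak equivalence via the same commutative square relating it to $\ve_{\chom(\cx,\cy)}$, $\chom(\ve_\cx,\id)$, and $\chom(\id,\ve_\cy)$. The only difference is that you justify the homotopy invariance of $\chom$ via cofibrancy of $\Phi\Psi\cx$ (as a $\cg$-CW complex) and fibrancy of all objects, where the paper invokes the Whitehead theorem after noting that $\Phi\Psi\cx$ has the homotopy type of a $\cg$-CW complex --- the same point in different language.
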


\begin{proof} 
Define $\zeta$ to be the adjoint of the composition
\begin{equation}
\label{def-lambda}
\lambda:\Psi\chom(\cx,\cy)\wedge \Psi\cx\os\varpi\to \Psi(\chom(\cx,\cy)\wedge \cx)\os{\Psi\mu}\to \Psi\cy,
\end{equation}
where $\varpi$ is as in Lemma \ref{descend} and $\mu$ is the counit map in \er{cg-counit}. 

%\smallskip
%\noindent
%{\tt Shouldn't it be $\varepsilon$? Also, we have to be careful because there is a double
%meaning for $\varepsilon$}
%\smallskip

By definition, to show that $\zeta$ is a weak equivalence, we need to show that 
%that the top horizontal arrow  in the following diagram is a weak equivalence
$$
\Phi\zeta:\Phi\Psi \chom(\cx,\cy)\to \Phi\M_\ct(\Psi\cx,\Psi\cy)\os{\er{internalhom1}} =\chom(\Phi\Psi\cx,\Phi\Psi\cy)
$$ 
is a weak equivalence of $\cg$-spaces. 

%For this purpose, note the following commutative diagram
%$$\xymatrix{
%\Phi\Psi \chom(\cx,\cy)\wedge \Phi\Psi\cx \ar[d]^{\varepsilon\wedge \varepsilon}\ar[r]^(.7){\Phi(\lambda)} & \Phi\Psi\cy\ar[d]^\varepsilon\\
%\chom(\cx,\cy)\wedge \cx\ar[r]^(.7)\mu & \cy,
%}$$
%where the $\ve$ are as in Proposition \ref{still-good}, $\mu$ as in \er{cg-counit}, $\lambda$ as in \er{def-lambda}, and $\Phi(\lambda)$ is defined in view of \er{phi-prod}. 

%Now the commutativity of this diagram gives the following identity:
%$$
%\chom(id,\varepsilon)\circ \Phi(\zeta)=\chom(\varepsilon,id)\circ ({\rm ad}\epsilon=id)\circ \varepsilon.
%$$

%\smallskip
%\noindent{\tt How does this follow from the diagram? It's not obvious to me.
%It is also not obvious to me that $\chom(-,-)$ preserves weak equivalences.
%Also the double meaning of $\varepsilon$ doesn't help.}
%\smallskip

%The above commutative diagram implies the
This follows from the following commutative diagram
$$
\xymatrix{
\Phi\Psi\chom(\cx,\cy)\ar[r]^{\Phi\zeta}\ar[d]^\ve & \chom (\Phi\Psi\cx,\Phi\Psi\cy)\ar[d]^{\chom(\id,\varepsilon_\cy)}\\
\chom(\cx,\cy)\ar[r]^{\chom(\varepsilon_\cx,\id)} & \chom(\Phi\Psi\cx,\cy),
}
$$
where all the other maps are weak equivalences. 
%, so is $\Phi(\zeta)$: 
%Here $\chom(\ve,id)$ is a weak equivalence since
Recall that we assume that $\cx$ is a $\cg$-CW complex, so $\Phi\Psi\cx$ has the homotopy type of a $\cg$-CW complex. Therefore $\ve_\cx:\Phi\Psi\cx\to \cx$  is a homotopy equivalence by the Whitehead theorem \cite[Theorem VI.3.5]{may}, which implies that $\chom(\ve_\cx,\id)$ is a homotopy equivalence. That $\chom(\id,\ve_\cy)$ is a weak equivalence follows from the fact that $\ve_\cy$ is a weak 
%$\ve_{\Phi\Psi\cy}$ is a
equivalence and that $\Phi\Psi\cx$ has the homotopy type of a $\cg$-CW complex by the Whitehead theorem \cite[Theorem VI.3.4]{may}. 
\end{proof}

Now we apply the functor $\Psi$ to a $\cg$-prespectrum to get a $G$-prespectrum. However, for a representation $V$ and the corresponding representation sphere $S^V$, we only have a natural $G$-map 
\begin{equation}
\label{def-eta}
\Theta\ve_V:\Psi\Phi S^V\to S^V,
\end{equation}
which is a weak equivalence of $G$-spaces. Here $\Theta$ is defined in \er{deftheta} (see the proof of Theorem V.3.2 in \cite{may}). To get a $G$-prespectrum in the  sense of \cite{may}, we need to fix the following choices. 

\begin{lemma}
\label{choose-beta}
There is a family of $G$-maps $\beta_V\colon S^V\to\Psi\Phi S^V$, $V\in\ca$, such that
\begin{enumerate}[(i)]
\item $\beta_V$ is a homotopy inverse to $\Theta\varepsilon_V\colon \Psi\Phi S^V\to S^V$;
\item for each $V,W\in\ca$ the following diagram commutes
\begin{equation}
\label{diagram-beta}
\xymatrix{
S^V\wedge S^W\ar[r]^(.4){\beta_V\wedge\beta_W}\ar[d]_{\cong}& \Psi\Phi S^{V}\wedge \Psi\Phi S^W\ar[d]^{\varpi}\\
S^{V+W}\ar[r]_(.4){\beta_{V+W}} & \Psi\Phi S^{V+W},
}
\end{equation}
where $\varpi$ is defined as in Lemma \ref{descend}. 
\end{enumerate}
\end{lemma}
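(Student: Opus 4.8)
The plan is to construct the family $\{\beta_V\}$ by induction on the dimension of $V$, using a cofinal countable chain $0 = V_0 \subset V_1 \subset V_2 \subset \cdots$ of indexing spaces in $\ca$ (after reindexing we may arrange every indexing space to sit inside such a chain, since $\ca$ is cofinal in a complete universe and we only need the $\beta$'s on a cofinal family for a $G$-prespectrum). First I would record the basic inputs: by \eqref{def-eta}, $\Theta\ve_V\colon \Psi\Phi S^V\to S^V$ is a weak equivalence of $G$-spaces, and since $S^V$ is a $G$-CW complex and $\Psi\Phi S^V$ has the homotopy type of a $G$-CW complex (by Proposition~\ref{still-good} applied fixed-point-wise together with the Whitehead theorem, as in Proposition~\ref{psi&internal-hom}), $\Theta\ve_V$ is in fact a $G$-homotopy equivalence. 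So for \emph{each} $V$ there exists \emph{some} homotopy inverse; the content of the lemma is the coherence condition (ii).

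The induction runs as follows. Set $\beta_0 = \id_{S^0}$. Suppose $\beta_{V_i}$ has been chosen satisfying (i), and compatibly with all diagrams \eqref{diagram-beta} among $V_0,\dots,V_i$. Let $W$ be the orthogonal complement of $V_i$ in $V_{i+1}$, so $V_{i+1} = V_i + W$ and $S^{V_{i+1}} \cong S^{V_i}\wedge S^W$. Pick \emph{any} homotopy inverse $\beta_W$ to $\Theta\ve_W$ (possible by the previous paragraph), and \emph{define}
$$
\beta_{V_{i+1}} := \varpi \circ (\beta_{V_i}\wedge \beta_W) \circ (\text{the canonical iso } S^{V_{i+1}} \cong S^{V_i}\wedge S^W).
$$
This makes the one new instance of \eqref{diagram-beta} (namely $V=V_i$, $W=W$) commute on the nose. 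I then need to check (i) for $\beta_{V_{i+1}}$, i.e. that it is a homotopy inverse to $\Theta\ve_{V_{i+1}}$. This follows by pasting: $\varpi$ is a weak equivalence of $G$-spaces by Lemma~\ref{descend}, $\beta_{V_i}$ and $\beta_W$ are homotopy equivalences by induction and choice, so the composite is a homotopy equivalence $S^{V_{i+1}}\to \Psi\Phi S^{V_{i+1}}$; and one checks that $\Theta\ve_{V_{i+1}} \circ \beta_{V_{i+1}}$ is $G$-homotopic to the identity by comparing with the smash of the two identity-homotopies $\Theta\ve_{V_i}\circ\beta_{V_i}\simeq \id$ and $\Theta\ve_W\circ\beta_W\simeq\id$, using the naturality of $\ve$ and $\varpi$ with respect to the evident maps (this is where the compatibility of $\varpi$ with the counit $\ve$, recorded in diagram \eqref{eq-varomega-varepsilon} in the proof of Lemma~\ref{descend}, gets used). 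Since $S^{V_{i+1}}$ is a finite $G$-CW complex, a one-sided homotopy inverse between spaces of the homotopy type of $G$-CW complexes is automatically two-sided, so (i) holds.

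The remaining point is that \emph{all} diagrams \eqref{diagram-beta}, not just the ones along the chain, commute up to the required coherence — but in fact once $\beta_{V+W}$ is \emph{defined} to be $\varpi\circ(\beta_V\wedge\beta_W)$ whenever $V,W$ are consecutive pieces of the chain, the general diagram for arbitrary $V,W\in\ca$ commutes up to homotopy by the associativity/compatibility of $\varpi$ with the simplicial bar construction (the triple bar construction $\ol B_\bullet(-,G\cf,J)$ makes $\Psi$ lax symmetric monoidal up to coherent homotopy, and $\varpi$ is the structure map). Here one should remark that for the purposes of building a $G$-prespectrum in the sense of \cite[Chapter~XII]{may} it is enough to have the $\beta_V$ on the cofinal chain with the structure maps strictly compatible there, which the inductive construction delivers. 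The main obstacle I anticipate is precisely this last bookkeeping: ensuring that the inductively defined $\beta$'s are mutually compatible for \emph{all} pairs in $\ca$ and not merely successive ones, which requires either restricting attention to the cofinal chain (the clean route) or invoking coherence for the lax monoidal structure on the bar construction. Everything else is a routine application of the Whitehead theorem together with Lemma~\ref{descend} and the naturality of $\ve$ and $\varpi$.
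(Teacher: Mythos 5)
Your setup (Whitehead theorem giving $G$-homotopy inverses to $\Theta\ve_V$, and verifying (i) for a smash of two $\beta$'s via naturality of $\ve$ and $\varpi$) is fine, but the heart of the lemma is the \emph{strict} commutativity of \eqref{diagram-beta} for \emph{all} $V,W\in\ca$, and your construction does not deliver it. First, the reduction to a countable chain is not available: $\ca$ is a cofinal set of indexing spaces in a complete universe and contains many mutually non-nested spaces, so the claim that ``every indexing space sits inside such a chain'' is false; restricting to a chain weakens the statement, and the later definition of the $G$-prespectrum $\Psi\cx$ — whose structure maps \eqref{varsigma} and their transitivity use $\beta_W$ and \eqref{diagram-beta} for arbitrary $V,W\in\ca$ — would then have to be reworked for sequence-indexed prespectra, which you do not do. Second, even along a chain your induction only makes the one new square for the successive pair $(V_i,W_i)$ commute; the pairs needed for transitivity, such as $(V_i,\,W_i+W_{i+1})$, are not covered unless you also define $\beta$ multiplicatively on sums of complements — which is exactly the issue being begged. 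Third, your fallback, commutativity only up to homotopy via an alleged coherently homotopy-monoidal structure on the bar construction, neither proves the lemma as stated nor suffices downstream (the prespectrum axioms in the sense of \cite{may} require the transitivity diagrams to commute on the nose), and that coherence claim is itself nontrivial and nowhere established in the paper.

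The missing idea is to use the finiteness of $G$ rather than an induction over inclusions: there are only finitely many isomorphism classes of irreducible $G$-representations. Fix a representative $V_i$ of each class; since $S^{V_i}$ is a $G$-CW complex and $\Psi\Phi S^{V_i}$ has the homotopy type of one, $\Theta\ve_{V_i}$ in \eqref{def-eta} is a $G$-homotopy equivalence by the Whitehead theorem, and one fixes a homotopy inverse $\beta_i$ once and for all. Each $V\in\ca$ is then given a fixed decomposition into these irreducibles, and $\beta_V$ is \emph{defined} as the composite of $\beta_{i_1}\wedge\cdots\wedge\beta_{i_k}$ with the iterated map $\varpi$ of Lemma \ref{descend}, the convention being that the decomposition chosen for $V+W$ is the concatenation of those chosen for $V$ and $W$. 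With this definition, \eqref{diagram-beta} commutes strictly for every pair by construction (using the associativity of $\varpi$ coming from the bar construction), and property (i) for a general $V$ follows as in your pasting argument, since $\varpi$ is a weak (hence, between $G$-CW homotopy types, a $G$-homotopy) equivalence.
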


\begin{proof} Recall that $G$ is finite. Choose a representation $V_i$ in each of the finitely many isomorphism classes of irreducible ones. Since $S^{V_i}$ is a $G$-CW complex, $\Psi\Phi S^{V_i}$ has the homotopy type of a $G$-CW complex. By the Whitehead theorem, $\Theta\ve_{V_i}$ in \er{def-eta} is a $G$-homotopy equivalence. Choose and fix an inverse $\beta_i$. A general representation $V\in \ca$ has a fixed decomposition into the irreducible ones $\{V_i\}$. Without loss of generality, assume that $V=V_1+V_2$. Then we define 
$$
\beta_V:S^V=S^{V_1}\wedge S^{V_2}\os{\beta_{V_1}\wedge \beta_{V_2}}\longrightarrow \Psi\Phi S^{V_1}\wedge \Psi\Phi S^{V_2}\os\varpi\to \Psi\Phi S^V.
$$
The general commutativity in diagram \er{diagram-beta} then follows by construction.
\end{proof}

\begin{definition}
Given a $\cg$-prespectrum $\cx=\{\cx_V\}_{V\in \ca}$ with structure map 
\begin{equation}
\label{lastsigma}
 \sigma:\Phi S^W\wedge \cx_V\to \cx_{V+W},
\end{equation}
%or rather the adjoint
%\begin{equation}
%\l{sph-adj}
%\theta:\Phi S^W\to \chom(\cx_V,\cx_{V+W}).
%\end{equation}
define $\Psi\cx$ to be the \emph{$G$-prespectrum} whose
value on a representation $V\in \ca$ is 
$$(\Psi\cx)_V:=\Psi\cx_V,$$
and whose structure map for $W\in \ca$ is the composition
\begin{equation}
\label{varsigma}
\varsigma:S^W\wedge (\Psi\cx)_V\os{\beta_W\wedge \id}\to \Psi\Phi S^W\wedge \Psi\cx_V\os{\varpi}\to \Psi(\Phi S^W\wedge \cx_V)\os{\Psi \sigma}\to \Psi\cx_{V+W}=(\Psi\cx)_{V+W}.
\end{equation}
\end{definition}

\begin{lemma}
Together the family  $\{\Psi\cx_V\}$ and the structure maps $\varsigma$ define a $G$-prespectrum.
\end{lemma}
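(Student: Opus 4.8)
The plan is to verify the two axioms that make a family of $G$-spaces equipped with structure maps into a $G$-prespectrum in the sense of \cite[Chapter XII]{may}: namely that the structure map for $W=0$ is the identity, and that the structure maps satisfy the evident transitivity (associativity) condition. Both will follow by unwinding the definition \eqref{varsigma} and using the coherence already established for the pieces $\beta$, $\varpi$, and $\Psi\sigma$.

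First I would check the unit condition. For $W=0$, the sphere $S^0$ is the two-point $G$-set, $\Phi S^0$ is the unit for $\wedge$ in $\cg\ct$, and the map $\beta_0\colon S^0\to \Psi\Phi S^0$ is (by construction in Lemma \ref{choose-beta}, applied to the empty decomposition) the canonical identification. Then $\varpi\colon \Psi\Phi S^0\wedge \Psi\cx_V\to \Psi(\Phi S^0\wedge \cx_V)$ is the identification coming from the empty product, and $\Psi\sigma_{V,0}=\Psi(\id)=\id$ since $\sigma_{V,0}=\id$ by the hypothesis on the $\cg$-prespectrum $\cx$ in Definition \ref{cgstuff}. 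Composing, $\varsigma$ for $W=0$ is the identity of $\Psi\cx_V$.

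Next I would check transitivity, i.e. that for $U,W\in\ca$ the two ways of going from $S^U\wedge S^W\wedge \Psi\cx_V$ to $\Psi\cx_{V+W+U}$ agree. This is a diagram chase: the outer square decomposes into three commuting sub-rectangles. The first uses the commutativity of \eqref{diagram-beta} in Lemma \ref{choose-beta}, which handles the $\beta$'s. The second uses the fact that $\varpi$ is associative as a lax-monoidal-type structure map on $\Psi$ — this associativity of $\varpi$ with respect to iterated products is exactly what was asserted (for the unbased case in Proposition \ref{psi&prod}, for the based case in Lemma \ref{descend}) and follows from the simplicial-level formula for $\varpi$, which is visibly associative since it is built from products of finite $G$-sets and the functoriality of $\cx$ along projections. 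The third uses the transitivity of the structure maps $\sigma$ of the $\cg$-prespectrum $\cx$ (the pentagon in Definition \ref{cgstuff}), to which $\Psi$ is applied functorially, together with one more application of the compatibility between $\varpi$ and the identification \eqref{phi-prod} of $\Phi S^{U}\wedge\Phi S^{W}$ with $\Phi S^{U+W}$.

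The main obstacle I expect is purely bookkeeping: stating the associativity of $\varpi$ precisely enough (as a coherence diagram at the level of the simplicial $G$-spaces $B_\bullet(-,G\cf,J)$) and checking that the identifications $\Phi S^U\wedge\Phi S^W=\Phi S^{U+W}$ used inside the pentagon for $\cx$ are compatible with the ones used when forming $\beta_{U+W}$ via $\beta_U\wedge\beta_W$ and $\varpi$. Since $\varpi$ on simplices sends a pair of simplices to their ``product'' simplex over $S_0\times T_0$ with the diagonal data pushed along projections, associativity holds strictly on the nose on simplices (the triple product $S_0\times T_0\times U_0$ is associative), so after geometric realization all the squares commute; the work is just in drawing the large diagram and pointing to the three inputs. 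I would therefore present the proof as: ``The unit condition is immediate from $\sigma_{V,0}=\id$ and $\beta_0=\id$; the transitivity condition follows by combining the commutativity of \eqref{diagram-beta}, the (evident) associativity of $\varpi$ on the simplicial level, and the transitivity of the structure maps of $\cx$,'' with a displayed commutative diagram making the three regions explicit.

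\begin{proof}
We verify the two axioms of \cite[Chapter XII]{may}. First, for $W=0$ the $G$-set $S^0$ is the unit for $\wedge$, the map $\beta_0\colon S^0\to\Psi\Phi S^0$ is the canonical identification (Lemma \ref{choose-beta} applied to the empty decomposition), $\varpi\colon\Psi\Phi S^0\wedge\Psi\cx_V\to\Psi(\Phi S^0\wedge\cx_V)$ is the identification associated to the empty product, and $\Psi\sigma_{V,0}=\Psi(\id)=\id$ since $\sigma_{V,0}=\id$ by Definition \ref{cgstuff}. Hence $\varsigma$ for $W=0$ is the identity of $\Psi\cx_V$.

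For transitivity, fix $U,W\in\ca$ and consider the diagram
$$
\xymatrix{
S^U\wedge S^W\wedge\Psi\cx_V\ar[r]\ar[d] & \Psi\Phi S^U\wedge\Psi\Phi S^W\wedge\Psi\cx_V\ar[r]\ar[d] & \Psi\Phi S^U\wedge\Psi(\Phi S^W\wedge\cx_V)\ar[r]\ar[d] & \Psi\Phi S^U\wedge\Psi\cx_{V+W}\ar[d]\\
S^{U+W}\wedge\Psi\cx_V\ar[r] & \Psi\Phi S^{U+W}\wedge\Psi\cx_V\ar[r] & \Psi(\Phi S^{U+W}\wedge\cx_V)\ar[r] & \Psi\cx_{V+W+U}
}
$$
where the horizontal composites are the two factorizations of $\varsigma$. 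The left square commutes by the commutativity of \eqref{diagram-beta} in Lemma \ref{choose-beta}. The middle square commutes because $\varpi$ is associative: on the level of the simplicial $G$-spaces $B_\bullet(-,G\cf,J)$, the map $\varpi$ sends a tuple of simplices to the simplex over the product of the base $G$-sets with the component data pushed along the projections, and the triple product of finite $G$-sets is strictly associative, so both composites realize the same simplicial map. The right square commutes by the transitivity pentagon for the structure maps of the $\cg$-prespectrum $\cx$ in Definition \ref{cgstuff}, after applying the functor $\Psi$ and using once more the compatibility of $\varpi$ with the identification \eqref{phi-prod}. Chasing the outer rectangle gives the required identity
$$
\varsigma\circ(\id\wedge\varsigma)=\varsigma\circ(\cong\wedge\id)\colon S^U\wedge S^W\wedge\Psi\cx_V\to\Psi\cx_{V+W+U}.
$$
Thus $\{\Psi\cx_V\}$ with the maps $\varsigma$ is a $G$-prespectrum.
\end{proof}
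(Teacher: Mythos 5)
Your proof supplies exactly the details that the paper's proof omits: the paper's own argument just says ``this is a check of compatibility, which follows from our choices in Lemma \ref{choose-beta} and the natural associativity of $\varpi$ in Lemma \ref{descend}; we omit the details,'' and your three-square decomposition (left square from \eqref{diagram-beta}, middle square from associativity of $\varpi$, right square from transitivity of $\sigma$ after applying $\Psi$ and naturality of $\varpi$) is precisely what that sketch points to, so this is the same approach. Two small cosmetic points: the transitivity condition in Definition \ref{cgstuff} is a square rather than a pentagon, and $\Psi\Phi S^0$ is not literally $S^0$ (it is a bar construction that merely deformation retracts to $S^0$ via $\Theta\varepsilon_0$), so $\beta_0$ is a chosen homotopy inverse rather than a canonical identification; for the unit clause one should either check that $\varpi\circ(\beta_0\wedge\id)$ agrees with the canonical projection up to the homotopies involved, or simply note that the notion of $G$-prespectrum from \cite[Chapter XII]{may} used here only requires the transitivity condition that you verify.
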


\begin{proof} This is a check of compatibility, which follows from  our choices in Lemma \ref{choose-beta} and the natural associativity of $\varpi$ in Lemma \ref{descend}. We omit the details.  
%We have made our choices in \ref{choose-beta} good.
%\smallskip
%\noindent{\tt Zhaohu: This definition seems different than the one I had given. Can give a proof
%of the compatiblity statement for it?}
\end{proof}

\begin{theorem}
\label{lemma:Psi-omega=omega}
If $\cx$ is an $\varOmega$-$\cg$-spectrum 
%and each $\cx_V$ is a $\cg$-CW-complex 
then $\Psi\cx$ is an $\Omega$-$G$-spectrum.
\end{theorem}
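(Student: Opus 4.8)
The plan is to verify the defining condition of an $\Omega$-$G$-spectrum, namely that for each pair $V,W\in\ca$ the adjoint
$\wt\varsigma\colon \Psi\cx_V\to \Omega^W\Psi\cx_{V+W}=\M_\ct(S^W,\Psi\cx_{V+W})$
of the structure map $\varsigma$ in \er{varsigma} is a weak equivalence of $G$-spaces, given that $\cx=\{\cx_V\}$ is an $\varOmega$-$\cg$-spectrum, i.e.\ the $\cg$-adjoint $\tau_{V,W}\colon\cx_V\to\varOmega^W\cx_{V+W}=\chom(\Phi S^W,\cx_{V+W})$ is a $\cg$-weak equivalence. Since weak equivalences of $G$-spaces are detected by $\Phi$ (as $\cg$-weak equivalences are defined component-wise), it suffices to show $\Phi\wt\varsigma$ is a $\cg$-weak equivalence.

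First I would assemble the tools already proved. Applying $\Psi$ to $\tau_{V,W}$ and postcomposing with the weak equivalence $\zeta$ of Proposition~\ref{psi&internal-hom} (with $\cx$ there taken to be $\Phi S^W$, which is a $\cg$-CW complex since $S^W$ is a $G$-CW complex) gives a chain
$$
\Psi\cx_V\os{\Psi\tau_{V,W}}\to \Psi\chom(\Phi S^W,\cx_{V+W})\os{\zeta}\to \M_\ct(\Psi\Phi S^W,\Psi\cx_{V+W}).
$$
Here $\Psi\tau_{V,W}$ is a weak equivalence because $\Psi$ preserves weak equivalences (Proposition~\ref{still-good}) and $\tau_{V,W}$ is one, and $\zeta$ is a weak equivalence by Proposition~\ref{psi&internal-hom}. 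Then I would precompose with $\M_\ct(\beta_W,\id)\colon \M_\ct(\Psi\Phi S^W,\Psi\cx_{V+W})\to\M_\ct(S^W,\Psi\cx_{V+W})=\Omega^W\Psi\cx_{V+W}$, which is a weak equivalence since $\beta_W$ is a $G$-homotopy equivalence by Lemma~\ref{choose-beta}(i) and $\M_\ct(-,Z)$ sends $G$-homotopy equivalences to $G$-homotopy equivalences. Composing these three weak equivalences produces a weak equivalence $\Psi\cx_V\to\Omega^W\Psi\cx_{V+W}$.

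The main work is then to identify this composite with $\wt\varsigma$, i.e.\ to check the commutativity of the diagram relating the composite above to the adjoint of $\varsigma$. Unwinding the adjunction \er{cgadj} and the definition \er{varsigma}, $\wt\varsigma$ is the adjoint of
$S^W\wedge\Psi\cx_V\os{\beta_W\wedge\id}\to\Psi\Phi S^W\wedge\Psi\cx_V\os\varpi\to\Psi(\Phi S^W\wedge\cx_V)\os{\Psi\sigma}\to\Psi\cx_{V+W}$,
while the composite constructed above, unwound through the definition \er{def-lambda} of $\lambda$ and hence of $\zeta$, is the adjoint of a map built from $\varpi$, $\Psi$ of the counit $\mu$ of \er{cgadj}, and $\Psi\tau_{V,W}$. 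The identification reduces to the triangle identity for the adjunction $\Phi\vdash$ internal Hom (so that $\Psi\mu\circ\Psi(\tau_{V,W}\wedge\id)=\Psi\sigma$ after the appropriate precomposition) together with the naturality of $\varpi$ from Lemma~\ref{descend}; this is a diagram chase on the triple-bar-construction level using only already-established naturality, so I would state it and relegate the bookkeeping.

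The step I expect to be the main obstacle is precisely this last identification: keeping track of how the several adjunctions interact — the $\cg$-space internal-Hom adjunction \er{cgadj}, the based mapping-space adjunction in $G\ct$, and the compatibility of $\varpi$ with these — so that the external weak equivalence $\M_\ct(\beta_W,\id)\circ\zeta\circ\Psi\tau_{V,W}$ genuinely equals $\wt\varsigma$ and not merely a map weakly equivalent to it. Once that diagram is seen to commute (up to the homotopies already furnished by Lemmas~\ref{descend} and~\ref{choose-beta}), the conclusion that $\wt\varsigma$ is a weak equivalence, and hence that $\Psi\cx$ is an $\Omega$-$G$-spectrum, is immediate from the two-out-of-three property for weak equivalences.
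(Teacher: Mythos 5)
Your proposal matches the paper's proof essentially step for step: both build the composite $\beta_W^*\circ\zeta\circ\Psi\tau_{V,W}$, show each factor is a weak equivalence by Proposition~\ref{still-good}, Proposition~\ref{psi&internal-hom}, and Lemma~\ref{choose-beta}, and then identify this composite with the adjoint of the structure map $\varsigma$, with the paper relegating that identification to ``it can be checked'' just as you do. The only cosmetic difference is your opening remark about passing through $\Phi$, which is harmless but unused since the composite is shown to be a weak equivalence of $G$-spaces directly.
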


\begin{proof} By Definition \ref{cgstuff}, the adjoint map
$$
\tau:\cx_V\to \chom(\Phi S^W,\cx_{V+W})
$$
associated to the structure map \er{lastsigma} is assumed to be a weak equivalence of $\cg$-spaces. 
%By Lemma \ref{still-good}, $\Psi\tau$ in the following composition is a weak equivalence of $G$-spaces. 
Therefore the composition 
\begin{align*}
\xi:\Psi\cx_V\os{\Psi\tau}\to \Psi\chom(\Phi S^W,\cx_{V+W})\os{\zeta}\to & \M_\ct(\Psi\Phi S^W,\Psi\cx_{V+W})\os{\beta_W^*}\to \M_\ct(S^W,\Psi\cx_{V+W})
\end{align*}
is a weak equivalence of $G$-spaces, by Lemma \ref{still-good}, Proposition \ref{psi&internal-hom} and Lemma \ref{choose-beta}. It can be checked that $\xi$ is the adjoint of the structure map $\varsigma$ in \er{varsigma} for the prespectrum $\Psi\cx$, in view of the definition of $\zeta$ as the adjoint of $\lambda$ in \er{def-lambda}. Therefore $\Psi\cx$ is an $\Omega$-$G$-spectrum.
%{\tt Zhaohu: Is this composite the structure map?}
\end{proof}

%If $X$ is the realization of a $G$-simplicial set, $X=|X_\bullet|$, then by Lemma~\ref{lemma:simplicial-version}
%each space $(\Sigma^V\wedge X)\hot M$ is $\cg$-CW complex.
%Therefore by Lemma~\ref{lemma:Psi-omega=omega} and Theorem~\ref{loopspace}
%the prespectrum $\Psi((\Sigma^\infty X)\hot M)$ is an $\Omega$-spectrum. 

Combining Theorem \ref{loopspace} and Theorem \ref{lemma:Psi-omega=omega}, we get the following final result, as another application of our stable equivariant abelianization functor.

\begin{theorem}\l{finalthm}
For a based $G$-CW complex $X$, the equivariant prespectrum $\Psi((\varSigma^\infty X)\hot M)$ is an $\Omega$-$G$-spectrum
satisfying $\pi_V^G\Psi((\varSigma^\infty X)\hot M)\cong \widetilde{H}_V^G(X;M)$. 

In particular, $\Psi((\varSigma^\infty S^0)\hot M)$ is an equivariant Eilenberg-Mac~Lane spectrum $H\!{M}$, and for each finite dimensional 
$G$-representation $V$, the $G$-space $\Psi(S^V\hot M)$ is an  equivariant Eilenberg-Mac~Lane space $K(M,V)$.
\end{theorem}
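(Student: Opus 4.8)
The plan is to assemble the final theorem directly from the two main results already proved. First I would recall that by Theorem~\ref{loopspace} the $\cg$-prespectrum $(\varSigma^\infty X)\hot M=\{(\varSigma^V X)\hot M\}_{V\in\ca}$ is an $\varOmega$-$\cg$-spectrum for a based $G$-CW complex $X$. Then Theorem~\ref{lemma:Psi-omega=omega} applies verbatim: applying the coalescence functor $\Psi$ turns this into an $\Omega$-$G$-spectrum $\Psi((\varSigma^\infty X)\hot M)$, with its $V$th space $\Psi((\varSigma^V X)\hot M)$ and structure maps as in~\er{varsigma}. So the first assertion — that $\Psi((\varSigma^\infty X)\hot M)$ is an $\Omega$-$G$-spectrum — is immediate.

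Next I would identify the homotopy groups. For a finite-dimensional $G$-representation $V\in\ca$, the $\Omega$-$G$-spectrum structure gives $\pi_V^G\Psi((\varSigma^\infty X)\hot M)\cong[\,S^V,\Psi((\varSigma^\infty X)\hot M)_0\,]_{G\ct}$, and more generally the $RO(G)$-graded homotopy groups can be computed levelwise since the structure maps are weak equivalences. The key bridge is Proposition~\ref{still-good}: the natural weak equivalence $\varepsilon_\cx\colon\Phi\Psi\cx\to\cx$ identifies the fixed-point system of $\Psi$ applied to a $\cg$-space with that $\cg$-space up to weak equivalence. Hence, using the adjunction $\Theta\dashv\Phi$ of~\er{deftheta}–\er{fully-faithful} together with the weak equivalences $\beta_V\colon S^V\to\Psi\Phi S^V$ of Lemma~\ref{choose-beta}, one gets
$$
\pi_V^G\Psi((\varSigma^\infty X)\hot M)\cong[\Phi S^V,(\varSigma^\infty X)\hot M]_{\cg\ct}=\pi_V^\cg(X\hot M),
$$
which by Theorem~\ref{rogdt} is $\widetilde H_V^G(X;M)$. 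A small amount of care is needed here to pass between the unbased coalescence of Definition~\ref{variant} used to model $\Psi\Phi S^V$ and the based one, and to match $[\,S^V,\Psi(S^W\hot M)\,]$ with $\cg$-homotopy classes — but this is exactly the content of Propositions~\ref{still-good}, \ref{psi\&internal-hom} and Lemma~\ref{choose-beta}.

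For the specialization, take $X=S^0$. Then $(\varSigma^V S^0)\hot M=S^V\hot M$, so $\Psi((\varSigma^\infty S^0)\hot M)$ is an $\Omega$-$G$-spectrum with $\pi_V^G\cong\widetilde H_V^G(S^0;M)$. Since $\widetilde H_V^G(S^0;M)$ is $M$ (as a Mackey functor, i.e.\ $\widetilde H_n^G(S^0;M)(G/H)=M(G/H)$ for $n=0$ and vanishes otherwise) — which follows from the dimension axiom, cf.\ Example~\ref{apoint} and the computation $x_0\ot M=M$ — this $\Omega$-$G$-spectrum has homotopy concentrated in degree $0$ with value $M$, hence is an equivariant Eilenberg--Mac~Lane spectrum $HM$. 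Consequently each space $\Psi(S^V\hot M)=\Psi((\varSigma^\infty S^0)\hot M)_V$ is an equivariant Eilenberg--Mac~Lane space $K(M,V)$, since the $V$th space of $HM$ is by definition such a space.

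The main obstacle I anticipate is not any single hard argument but the bookkeeping in the homotopy-group identification: one must check that the composite of weak equivalences $\varepsilon$, $\zeta$, $\beta_V$ and the $\Theta\dashv\Phi$ adjunction really induces the stated isomorphism $\pi_V^G\Psi((\varSigma^\infty X)\hot M)\cong\pi_V^\cg(X\hot M)$ compatibly with the structure maps (so that the $RO(G)$-grading is respected), and that the Mackey-functor-valued statement "$\widetilde H_*^G(S^0;M)=M$ concentrated in degree $0$" is correctly invoked. All the genuinely technical work, however, has already been done in Theorems~\ref{loopspace}, \ref{rogdt}, \ref{lemma:Psi-omega=omega} and the surrounding propositions, so the proof here is essentially a matter of citing and combining them.
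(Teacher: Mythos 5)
Your proposal is correct and follows the same route as the paper: the paper itself disposes of this theorem by simply combining Theorem~\ref{loopspace} (via Theorem~\ref{lemma:Psi-omega=omega}) with the Dold--Thom theorem~\ref{rogdt}, and the homotopy-group identification you spell out (threading $\varepsilon$, $\beta_V$ and the $\Theta\dashv\Phi$ adjunction through the levelwise weak equivalences) is the expected bookkeeping left implicit in the text. Your specialization to $X=S^0$ via the dimension axiom is likewise the intended reading.
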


\bibliography{TOPOL-3567}
\bibliographystyle{amsplain}

\end{document}